\newsavebox\myboxA
\newsavebox\myboxB
\newlength\mylenA
\newcommand*\xoverline[2][0.75]{%
    \sbox{\myboxA}{$\m@th#2$}%
    \setbox\myboxB\null
    \ht\myboxB=\ht\myboxA%
    \dp\myboxB=\dp\myboxA%
    \wd\myboxB=#1\wd\myboxA
    \sbox\myboxB{$\m@th\overline{\copy\myboxB}$}
    \setlength\mylenA{\the\wd\myboxA}
    \addtolength\mylenA{-\the\wd\myboxB}%
    \ifdim\wd\myboxB<\wd\myboxA%
       \rlap{\hskip 0.5\mylenA\usebox\myboxB}{\usebox\myboxA}%
    \else
        \hskip -0.5\mylenA\rlap{\usebox\myboxA}{\hskip 0.5\mylenA\usebox\myboxB}%
    \fi}
\definecolor{dark_blue}{RGB}{46,87,144}
\definecolor{blue_pers}{RGB}{54,104,171}
\definecolor{grey_pers}{RGB}{245,245,245}
\definecolor{red_pers}{RGB}{213,78,33}
\definecolor{green_pers}{RGB}{194, 239, 194}
\tikzstyle{block0} = [rectangle, draw, fill=grey_pers!30, 
\tikzstyle{block} = [rectangle, draw, fill=grey_pers!30, 
\tikzstyle{blocka} = [rectangle, draw, fill=grey_pers!30, 
\tikzstyle{type} = [rectangle, draw, fill=grey_pers, 
\tikzstyle{objection} = [rectangle, draw, 
\tikzstyle{ref} = [rectangle, draw, fill=grey_pers, 
\tikzstyle{ref1} = [rectangle, draw, 
\tikzstyle{line} = [draw, -latex']
\tikzstyle{line_1} = [draw]
\tikzstyle{cloud} = [draw, ellipse,fill=grey_pers, node distance=3cm,
\tikzstyle{circle_blue} = [draw,circle,fill=blue_pers!30, node distance=2cm,
    \tikzstyle{circle_green} = [draw,circle,fill=green_pers,
\tikzstyle{circle_grey} = [draw,circle,fill=grey_pers, node distance=2cm,
\tikzstyle{circle_red} = [draw,circle,fill=red_pers!30, node distance=2cm,
\tikzstyle{back} = [rectangle, draw, 
\tikzstyle{back_b} = [rectangle, draw, fill=blue_pers!30, 
\tikzstyle{back_g} = [rectangle, draw, fill=grey_pers!30, 
\newcommand{\ba}{{\bf a}}
\newcommand{\boldbeta}{{\boldsymbol{\beta}}}
\newcommand{\bSigma}{{\boldsymbol{\Sigma}}}
\newcommand{\bb}{{\bf b}}
\newcommand{\br}{{\bf r}}
\newcommand{\bn}{{\bf n}}
\newcommand{\bx}{{\bf x}}
\newcommand{\by}{{\bf y}}
\newcommand{\bv}{{\bf v}}
\newcommand{\bee}{{\bf e}}
\newcommand{\bxi}{\boldsymbol{\xi}}
\newcommand{\bB}{{\bf B}}
\newcommand{\bM}{{\bf M}}
\newcommand{\bA}{{\bf A}}
\newcommand{\U}{{\textup{U}}}
\newcommand{\F}{{\textup{F}}}
\newcommand{\IN}{\mathbb{N}}
\newcommand{\IR}{\mathbb{R}}
\newcommand{\IP}{\mathbb{P}}
\newcommand{\IE}{\mathbb{E}}
\newcommand{\IV}{\mathbb{V}}
\newcommand{\IS}{\mathbb{S}}
\newcommand{\card}{\operatorname{card}}
\newcommand{\atantwo}{\operatorname{atan2}}
\newcommand{\divg}{\operatorname{div}}
\newcommand{\be}{\begin{equation}}
\newcommand{\ee}{\end{equation}}
\newcommand{\norm}[2]{\left\|#1\right\|_{#2}}
\newcommand{\Hloc}{H_{\textup{loc}}}
\newcommand{\eps}{\varepsilon}
\newcommand{\DL}{\mathsf{DL}}
\newcommand{\SL}{\mathsf{SL}}
\newcommand{\opA}{\mathsf{A}}
\newcommand{\opV}{\mathsf{V}}
\newcommand{\opW}{\mathsf{W}}
\newcommand{\SRC}{\textup{SRC}}
\newcommand{\opK}{\mathsf{K}}
\newcommand{\opI}{\mathsf{Id}}
\newcommand{\opB}{\mathsf{B}}
\newcommand{\mA}{\mathcal{A}}
\newcommand{\mD}{\mathcal{D}}
\newcommand{\mH}{\mathcal{H}}
\newcommand{\mL}{\mathcal{L}}
\newcommand{\mM}{\mathcal{M}}
\newcommand{\mO}{\mathcal{O}}
\newcommand{\mP}{\mathsf{P}}
\newcommand{\mU}{\mathcal{U}}
\newcommand{\fA}{\mathsf{A}}
\newcommand{\fB}{\mathsf{B}}
\newcommand{\fZ}{\mathsf{Z}}
\newcommand{\fC}{\mathsf{C}}
\newcommand{\fI}{\mathsf{I}}
\newcommand{\fK}{\mathsf{K}}
\newcommand{\fP}{\mathsf{P}}
\newcommand{\fR}{\mathsf{R}}
\definecolor{Gray}{gray}{0.2}
\definecolor{Greeen}{RGB}{14,118,17}
\definecolor{BlueGreeen}{RGB}{31,191,190}
\definecolor{Pink}{RGB}{189, 27, 189}
\definecolor{Gray1}{gray}{0.9}
\definecolor{dark_green}{RGB}{0,100,0}
\definecolor{dark_blue}{RGB}{46,87,144}
\newcommand\npe[1]{\iffalse #1 \fi}
\newcommand{\TheTitle}{
Helmholtz scattering by random domains: first-order sparse boundary element approximation}
\newcommand{\TheAuthors}{Paul Escapil-Inchausp\'e and Carlos Jerez-Hanckes}
\headers{\TheTitle}{\TheAuthors}
\title{{\TheTitle}\thanks{Submitted to the editors \today.
\funding{This work was supported in part by Fondecyt Regular 1171491.}}
}
\author{
  Paul Escapil-Inchausp\'{e}\thanks{School of Engineering, Pontificia Universidad Cat\'olica de Chile, Santiago, Chile (\email{pescapil@uc.cl}).}
  \and
 Carlos Jerez-Hanckes\thanks{Faculty of Engineering and Sciences, Universidad Adolfo Iba\~nez, Santiago, Chile (\email{carlos.jerez@uai.cl}).}}
\DeclareMathOperator{\diag}{diag}
\begin{document}
\date{\today}

\maketitle
\begin{abstract}
We consider the numerical solution of time-harmonic acoustic scattering by obstacles with uncertain geometries for Dirichlet, Neumann, impedance and transmission boundary conditions. In particular, we aim to quantify diffracted fields originated by small stochastic perturbations of a given relatively smooth nominal shape. Using first-order shape Taylor expansions, we derive tensor deterministic first kind boundary integral equations for the statistical moments of the scattering problems considered. These are then approximated by sparse tensor Galerkin discretizations via the combination technique (Griebel et al. \cite{griebel1990combination,combi2}). We supply extensive numerical experiments confirming the predicted error convergence rates with poly-logarithmic growth in the number of degrees of freedom and accuracy in approximation of the moments. Moreover, we discuss implementation details such as preconditioning to finally point out further research avenues.

\end{abstract}

\begin{keywords} 
Helmholtz equation, shape calculus, uncertainty quantification, boundary element method, combination technique
\end{keywords}

\begin{AMS}
47A40, 35J25, 49Q12.
\end{AMS}
\pagestyle{myheadings}
\thispagestyle{plain}
\markboth{ESCAPIL-INCHAUSP\'{E} AND JEREZ-HANCKES}{HELMHOLTZ SCATTERING BY RANDOM DOMAINS}

\section{Introduction}
\label{sec:intro}
Modeling wave scattering is key in numerous fields ranging from aeronautics to bioengineering or astrophysics. As applications become more complex, the ability to efficiently quantify the effects of random perturbations originated by actual manufacturing or operation conditions becomes ever more relevant for robust design. Under this setting, we consider standard time-harmonic wave scattering models with only aleatoric uncertainty, i.e.~randomness in the shapes. More specifically, we aim at providing an accurate and fast uncertainty quantification (UQ) method for computing statistical moments of wave scattering solutions assuming small random perturbations or deviations from a nominal deterministic shape. 

The model problems here considered involve solving Helmholtz equations in unbounded domains with constant coefficients supplemented by one or more different boundary conditions (BCs), namely, Dirichlet, Neumann, impedance and transmission ones. Under reasonable decay conditions at infinity, deterministic versions of such problems can be shown to be uniquely solvable even for Lipschitz scatterers \cite{Nedelec,sauter}. Considering Lipschitz parametrized transformations, the small perturbation assumption leads to diffeomorphisms between nominal and perturbed domains. This, in turn, gives rise to suitable shape Taylor expansions for the scattered fields, for which the corresponding shape derivatives (SDs) must be computed. Restricting ourselves to sufficiently smoother nominal domain, these SDs are solutions of homogeneous boundary value problems (BVPs) with boundary data depending on the normal component of the velocity field, allowed by the Hadamard structure theorem (see Theorem 2.27 in \cite{Sokolowski}).

We will then approximate fields in the perturbed domains by quantities defined solely on the nominal shape. Indeed, for the cases considered --constant coefficients and unbounded domains--, one can conveniently reduce the volume problems associated to the scattered fields as well as to their SDs, onto the scatterers' boundaries by means of the integral representation formula \cite{sauter}. This involves solving boundary integral equations (BIEs) shown to be well posed.

The above described first order approximation (FOA) can be extended from the deterministic case to now random (but small) perturbations \cite{chernov2013first}, giving birth to equations with deterministic operators with stochastic right-hand sides. Assuming separability of the underlying functional spaces as well as Bochner integrability, application of statistical moments on the linearized equation yields tensorized versions of the operator equations, thus parting from the multiple solves required by Monte Carlo (MC) methods. Yet, direct numerical approximation of these tensor systems gives rise to the infamous curse of dimensionality. This can be, in turn, remedied by applying the general sparse tensor approximation theory originally developed by von Petersdorff and Schwab \cite{vonPetersdorff2006}, and which has multiple applications ranging from diffraction by gratings \cite{SAJ18} to neutron diffusion \cite{FJM19} problems. In our case, numerically, we will employ the Galerkin boundary element method (BEM) to solve the arising first kind BIEs. As both nominal solutions and SDs will be derived over the same surface, the FOA-BEM allows for substantial computational savings by employing the same matrix computations.

Depending on the regularity of solutions, statistical moments resulting from the FOA-BEM can be computed by sparse tensor approximations robustly. Harbrecht, Schneider and Schwab \cite{sparse3} studied the interior Laplace problem with Dirichlet BC whereas the Laplace transmission problem was analyzed in \cite{chernov}. Jerez-Hanckes and Schwab \cite{JS15_613} provide the numerical analysis of the method in the case of Maxwell scattering. Computationally, further acceleration can be achieved by employing the combination technique (CT), introduced by Griebel and co-workers \cite{griebel1990combination,combi2}. Specifically, the method allows for simple and parallel implementation, which we will further detail in the manuscript. Throughout, we apply the FOA-BEM-CT method --referred to as first-order sparse BEM (FOSB) method to alleviate notations-- to the Helmholtz problem. To our knowledge, the case of the FOSB method for the Helmholtz-UQ remains untackled.

The manuscript is structured in the following way. First, we introduce the mathematical tools used throughout in \Cref{sec:mathematical_tools}. Generic scattering problems formulations as well as the description of the BVPs solved by the SDs are given in \Cref{sec:helm_scat}. We then restrict ourselves to the associated BIEs in \Cref{sec:boundary_reduction} and analyze their Galerkin solutions in \Cref{sec:galerkin_CT}. Implementation aspects of the FOSB method are given in \Cref{sec:further} whereas numerical results are provided in \Cref{sec:Numres}. Finally, further research avenues are highlighted in \Cref{sec:conclusion}. 
\section{Mathematical tools}
\label{sec:mathematical_tools}
We start by setting basic definitions as well as the functional space framework adopted for our analysis. As a reference, \Cref{tab:Acronyms} beneath \Cref{sec:galerkin_CT} provides a non-exhaustive list of the acronyms used throughout this work.
\subsection{General notation}
\label{subs:prelim}
Throughout, vectors and matrices are expressed using bold symbols, $(\ba \cdot \bb)$ denotes the classical Euclidean inner product, $\norm{\cdot}{2}:=\sqrt{\ba \cdot \ba}$ refers to the Euclidean norm, $C$ is a generic positive constant and $o$, $\mO$ are respectively the usual little-$o$ and big-$\mO$ notations. Also, we set $\imath^2=-1$, $\IS^1$ and $\IS^2$ are the unit circle and sphere, respectively.

Let $D \subseteq  \IR^d$, with $d=2,3$, be an open set. For a natural number $k$, we set $\IN_k:=\{k,k+1,\ldots\}$. For $p \in \IN_0=\{0,1,\ldots\}$, we denote by $C^{p}(D)$ the space of $p$-times differentiable functions over $D$, by $C^{p,\alpha}(D)$ the space of H\"older continuous functions with exponent $\alpha$, where $0<\alpha\leq1$. Also, let $L^p(D)$ be the standard class of functions with bounded $L^p$-norm over $D$. For a Banach space $X$ and an open set $T \subset \IR$, we introduce the usual Bochner space $C^p(T;X)$. Given $s \in \IR$, $q\geq 0$, $p\in [1,\infty]$, we refer to \cite[Chapter 2]{sauter} for the definitions of function spaces $W^{s,p}(D)$, $H^s(D)$, $\Hloc^q(D)$ and $\Hloc^q(\Delta,D)$. Norms are denoted by $\norm{\cdot}{}$, with subscripts indicating the associated functional space. Similarly, relative norms are denoted by brackets e.g., $[a-b]=\|a-b\| /\|b\|$ for $a$ an approximation of a reference $b$.

For $k \in \IN_1$, and $\bx_i \in \IR^d$, $i=1,\cdots,k$, we set $\underline{\bx}:=(\bx_1,\cdots,\bx_k)$. Besides, $k$-fold tensors quantities are denoted with parenthesized subscripts, e.g., $f^{(k)}:=f \otimes \cdots \otimes f$. This notation applies indifferently to functions, domains and function spaces. The diagonal terms of a $k$-fold tensor $\Sigma^k$ at $\underline{\bx}$ are denoted by $\diag\Sigma^k(\underline{\bx}):=\Sigma^k|_{\bx_1=\cdots=\bx_k}$.
Following \cite[Section 4.1]{JS15_613}, for $X,Y$ separable Hilbert spaces, we set $\fB \in \mL(X,Y)$ the space of linear continuous mapping from $X$ to $Y$ and define the unique continuous tensor product operator:
\vspace{-0.2cm}
$$
\fB^{(k)} := \smash[b]{\! \underbrace{\fB \otimes \cdots \otimes \fB}_\textup{$k$-times}} \in \mL(X^{(k)},Y^{(k)}).\vspace{0.5em}
$$
\subsection{Traces and surface operators}
Let $D  \subset \IR^d$ with $d=2,3$ be open bounded with Lipschitz boundary $\Gamma:=\partial D$ and complement exterior domain $D^c:=\mathbb{R}^d \backslash \overline{D}$. Equivalently, we will write $D^0 \equiv D^c$ and $D^1\equiv D$ to refer to exterior and interior domains, respectively. Accordingly, when defining scalar fields in $D^c\cup D$, we use notation $\U=(\U^0,\U^1)$. For $i=0,1$, we introduce the continuous and surjective trace mappings \cite[Sections 2.6 and 2.7]{sauter}:
\begin{equation*}
\begin{array}{cl}
\textup{(Dirichlet trace)} & \gamma_0 : \Hloc^1(D^i) \to  H^{\frac{1}{2}}(\Gamma),\\
\noalign{\vspace{2pt}}
\textup{(Neumann trace)}   & \gamma_1 : \Hloc(\Delta,D^i) \to  H^{-\frac{1}{2}}(\Gamma).
\end{array}
\end{equation*}
For a suitable scalar field $\U^i$, $i=0,1$, we refer to a pair of traces $\bxi^i$ as Cauchy data if
\be
\label{eq:cauchy_trace}
\bxi^i \equiv \begin{pmatrix} \lambda_i \\ \sigma_i \end{pmatrix}: = \begin{pmatrix} \gamma_0\U^i \\ \gamma_1\U^i \end{pmatrix}.
\ee
Likewise, we introduce the second-order trace operator $\gamma_2\U^i := (\nabla^2\U_i |_\Gamma) \bn \cdot \bn  = \frac{\partial^2\U^i}{\partial \bn^2}\big|_\Gamma$ along with the tangential gradient $\nabla_\Gamma$ and tangential divergence $\divg_\Gamma$ \cite[Section 2.5.6]{Nedelec}. 

\subsection{Random domains}
\label{subs:domain_transformation}
Throughout, we consider an open bounded Lipschitz --nominal-- domain $D \subset \IR^d$, $d=2,3$, of class $C^{2,1}$ \cite[Definition 3.28]{MacLean}, with boundary $\Gamma:=\partial D$ and exterior unit normal field $\bn\in W^{2,\infty}(\Gamma)$ pointing by convention towards the exterior domain. Those domains are commonly referred to as domains with \textit{Lyapunov boundary}. The mean curvature $\mathfrak{H}:=\divg \bn$ belongs to $W^{1,\infty}(\Gamma)$. 

Let $(\Omega,\mA,\IP)$ be a suitable probability space and $X$ a separable Hilbert space. For an index $k\in \IN_1$ and $\underline{\bx} := (\bx_1,\cdots,\bx_k)$, and for $\U:\Omega \to X$ a random field in the Bochner space $L^k(\Omega,\IP ; X)$ \cite[Section 4.1]{JS15_613}, we introduce the statistical moments:
\begin{align}
\label{eq:statmoment}
\mM^k[\U(\omega)]&: = \int_\Omega \U(\bx_1,\omega)\cdots \U(\bx_k,\omega)d\IP(\omega),\\
\IV^k[\U(\omega)]&=: \diag \mM^k[\U(\omega)] - \IE[\U(\omega)]^k,
\end{align}
with $\mM^1\equiv \IE$ being the \textit{expectation} and $\IV^2$ the \textit{pseudo-variance} \cite{SAJ18}.

In a nutshell, the aim of the present work is as follows: given a random domain with realization $\mD(\omega)$ specified later on, consider $\U(\bx,\omega)$ defined over $\mD(\omega)$ as the solution of a Helmholtz scattering problem (see \Cref{sec:helm_scat}). We seek at quantifying:
\be
\label{eq:initial_problem}
\IE [ \U(\omega)] \textup{ and } \mM^k[\U(\omega)-\IE[\U(\omega)]]~\textup{for}\ k\in \IN_2.
\ee
\begin{remark}[Complex statistical moments]\label{remark:complex_moments}Statistical moments for complex random fields induce several quantities of interest. As introduced in \cite[Section V-A]{eriksson2010essential}, for $k\in \IN_2$ and $\U \in L^k(\Omega,\IP;X)$, the $k$th statistical moments are defined as
\be
\alpha_{p;q}\equiv\alpha_{p;q}[\U(\omega)]:= \IE [ \U^p \overline{\U}^q]\textup{, for } p,q \in \IN_0 \textup{, such that } p+q = k.
\ee
Notice that symmetric moments are redundant, i.e.~$\alpha_{p;q}=\overline{\alpha_{q;p}}$. Also, for $k=2$, complex moments are the \textit{pseudo-covariance} $\alpha_{2;0}$ and  covariance $\alpha_{1;1}=\IE[\U \overline{\U}]$. In this manuscript, we focus on 
$$
\mM^k[\U] = \alpha_{k;0} = \overline{\alpha_{0;k}} .
$$
However, some applications involve other choices for $p,q$. Still, our analysis applies verbatim to $\alpha_{p;k}$ up to conjugation of terms in the tensor deterministic formulation in \Cref{subs:tensor_bie} (see, for instance, \cite{SAJ18} for $k=2$).
\end{remark}
We consider a centered random \textit{velocity field} field $\bv \in L^k(\Omega, \IP ; W^{2,\infty}(\Gamma;\IR^d))$, i.e.~such that $\IE[\bv(\cdot,\omega)]=\bf{0}$. Also, assume $\|\bv(\cdot,\omega)\|_{W^{2,\infty}(\Gamma)}\lesssim 1$ uniformly for all $\omega \in \Omega$ and introduce a family of random surfaces \{$\Gamma_t\}_{t}$ via the mapping
\be\label{eq:transformed_velocity}
\Omega \ni \omega \mapsto \Gamma_t (\omega) = \{\bx + t \bv(\omega) ,~ \bx \in \Gamma\} =: T_t(\Gamma)(\omega).
\ee
Following \cite{JS15_613}, we deduce that there exists $\eps >0$ such that, for each $|t|<\eps$ and $\IP$-a.s.~$\omega$, the collection $\{\Gamma_t(\omega)\}$ generates bi-Lipschitz diffeomorphisms and induces connected Lipschitz domain $D_t(\omega)$ by continuity of $\bv(\omega)$ $\IP$-a.s.~on the compact surface $\Gamma$. Besides, we define $\mathfrak{D}_t(\omega)$ corresponding to either $D_t^c(\omega)$ or $D_t^c(\omega) \cup D_t(\omega)$ according to the problem considered. Finally, we notice that $(\bv (\omega) \cdot \bn) \in W^{2,\infty}(\Gamma)$.

\subsection{First-order approximations}
\label{subs:shape_derivative}
With the domain transformation and velocity field defined in \Cref{subs:domain_transformation}, we are ready to introduce the concept of random SD.
\begin{definition}[Random SD]
\label{def:domain_derivative}For $\omega \in \Omega$, consider a random shape dependent scalar field $\U_t(\omega)$ defined in $\mathfrak{D}_t(\omega)$ and denote $\U\equiv\U_0$ for the nominal domain solution. Then, $\U_t(\omega)$ admits a SD $\U'(\omega)$ in $\mathfrak{D}$ along $\bv(\omega)$ if the following limit exists
\be
\label{eq:ddev}
\U'(\omega):= \lim_{t \to 0}\frac{\U_t(\omega)-\U}{t}.
\ee
Assuming SD belongs to $H^1_\textup{loc}(\mD)$ and a Lipschitz condition, then the following Taylor expansion holds for $|t|<\eps$:
\be
\label{eq:Taylor}
\U_t(\bx,\omega) = \U(\bx) + t \U'(\bx,\omega)+\mO(t^2) ~\textup{ in } H^1(Q(\omega))\textup{, }  Q(\omega) \Subset \mathfrak{D} 
\cap \mathfrak{D}_t(\omega).
\ee
\end{definition}
Finally, following \cite[Section 2.1]{dolz2018hierarchical}, we introduce $K$ such that 
\be
K\Subset \mathfrak{D}_t^{\cap\Omega},~ D_t^{\cap \Omega}:= \bigcap_{\omega \in \Omega} D_t(\omega).
\ee
Consequently, according to \Cref{eq:Taylor} and using the embedding arguments of \cite[Lemma 5.9]{dolz2018hierarchical} for the variance, the quantities of interest can be accurately approximated for $k\geq 2$ by
\begin{equation}
\label{eq:approx_moments}
\begin{array}{rll}
\IE [ \U_t(\omega) ] &= \U  +  \mO(t^2),& \textup{ in } H^1(K),\\
\noalign{\vspace{3pt}}
\mM^k [\U_t(\omega) -\U]& = t^k \mM^k[\U'(\omega)] + \mO(t^{k+1}),& \textup{ in } H^1(K)^{(k)}\textup{, and}\\
\noalign{\vspace{3pt}}
\IV^k [\U_t(\omega)]& = t^k \diag \mM^k[\U'(\omega)]+ \mO(t^{k+1}),& \textup{ in } L^2(K).
\end{array}
\end{equation}
Hence, for a random class of parametrized perturbations (\Cref{eq:transformed_velocity}), the statistical moments (\Cref{eq:initial_problem}) can be approximated accurately through $\U$, $\mM^k[\U'(\omega)]$, defined in $\mathfrak{D}$ and $\mathfrak{D}^{(k)}$: the FOA amounts to computing $\U$ and $\mM^k[\U'(\omega)]$. Before proceeding, we decide to sum up the main points of the FOSB method in \Cref{picture:steps}. It describes the path followed throughout and, for each step, details the related section and the quantity of interest considered. The technique is sequential from top to bottom, and between each step we use arrows specify whether an approximation is done or an equivalent formulation is used. Notice that the two ``equivalent'' steps enclose the operations realized on the boundary of the nominal scatterer.
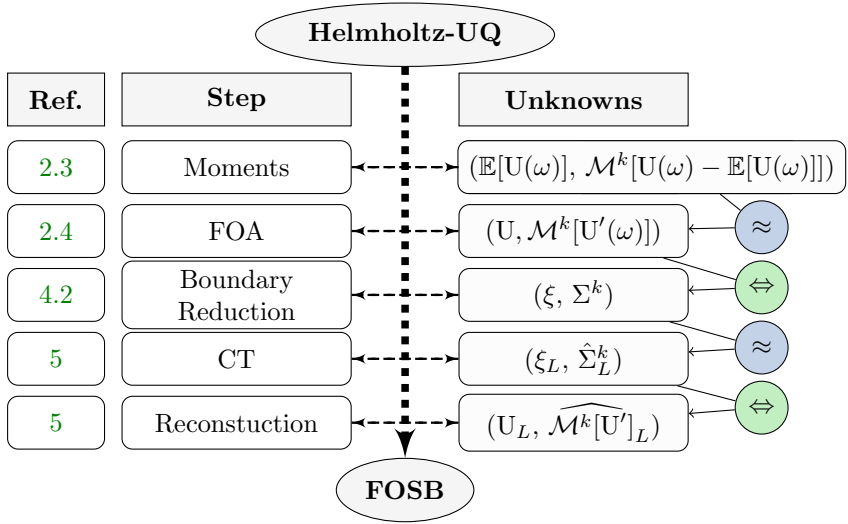
\begin{figure}[t]
 \centering
  \begin{tikzpicture}[paths/.style={->, thick, >=stealth'}, node distance = 2cm, auto]
   \node [cloud, font = \bf] (init) {Helmholtz-UQ};
   \node [type, below left = 0.2cm and -0.7cm of init,font=\bf] (00) {Step};

   \node [objection, below of=00,node distance=.9cm] (10) {Moments};

   \node [type, below right = 0.2cm and -0.7cm of init, font=\bf] (01) {Unknowns};
   \node [block, below of=01,node distance=.9cm] (11) {$a$};
   \node [block, right of=11,node distance=1.5cm] (a1) {};
   
   \node [block0, right of=11,node distance=1.04cm] (a) {$(\IE[\U(\omega)]$, $\mM^k[\U(\omega) - \IE[\U(\omega)]])$};
   \path [line,dashed,line width = 0.3mm] (11) -- (10);
   \path [line,dashed,line width = 0.3mm] (10) -- (11);
    
   \node [objection, below of=10,node distance=0.85cm] (20) {FOA};
   \node [block, below of=11,node distance=0.85cm] (21) {$(\U,\mM^k[\U'(\omega)])$};
   \path [line,dashed,line width = 0.3mm] (21) -- (20);
   \path [line,dashed,line width = 0.3mm] (20) --  (21);

   \node [objection, below of=20,node distance=0.85cm] (30) {Boundary Reduction};
   \node [block, below of = 21,node distance=0.85cm] (31) {$(\xi$, $\Sigma^k)$};
   \path [line,dashed,line width = 0.3mm] (30) --  (31);
   \path [line,dashed,line width = 0.3mm] (31) --  (30);


   \node [objection, below of=30,node distance=0.85cm] (40) {CT};
   \node [block, below of = 31,node distance=0.85cm] (41) {$(\xi_L$, $\hat{\Sigma}_L^k)$};
   \path [line,dashed,line width = 0.3mm] (40) --  (41);
   \path [line,dashed,line width = 0.3mm] (41) --  (40);

   \node [objection, below of=40,node distance=0.85cm] (50) {Reconstuction};
   \node [block, below of = 41,node distance=0.85cm] (51) {$(\U_L$, $\widehat{\mM^k[\U']}_{L})$};
   \path [line,dashed,line width = 0.3mm] (50) --  (51);
   \path [line,dashed,line width = 0.3mm] (51) --  (50);

   \node [cloud, below = 5.2cm of init, font=\bf] (end) {FOSB};
    \path [line,dashed, line width = 1mm] (init) --  (end);

    \node [ref, left of=00,node distance=2.4cm] (12) {{\bf Ref.}};
   \node [ref1, left of=10,node distance=2.4cm] (12) {\ref{subs:domain_transformation}};
   \node [ref1, left of=20,node distance=2.4cm] (22) {\ref{subs:shape_derivative}};
   \node [ref1, left of=30,node distance=2.4cm] (32) {\ref{subs:tensor_bie}};
   \node [ref1, left of=40,node distance=2.4cm] (42) {\ref{sec:galerkin_CT}};
   \node [ref1, left of=50,node distance=2.4cm] (52) {\ref{sec:galerkin_CT}};

   \node [right of=11,node distance=2.5cm] (03) {};
   \node [circle_blue,below of=03,node distance=0.8cm] (13) {$\approx$};
   \node [circle_green, below of=13,node distance=0.8cm] (23) {$\Leftrightarrow$};
   \node [circle_blue, below of=23,node distance=0.8cm] (33) {$\approx$};
   \node [circle_green, below of=33,node distance=0.8cm] (43) {$\Leftrightarrow$};
   \draw[->] (a1) -- (13) -- (21) ;
   \draw[->] (21) -- (23) -- (31) ;
   \draw[->] (31) -- (33) -- (41) ;
   \draw[->] (41) -- (43) -- (51) ;
\end{tikzpicture}
\caption{Sequential description of the FOSB. For each step, we detail the related section and unknwowns and precise whether it consists in an approximation (blue) or an equivalent (green) one.}
\label{picture:steps}
\vspace{-0.5cm}
\end{figure}

\section{Deterministic Helmholtz scattering problems}
\label{sec:helm_scat}
Let us now describe the Helmholtz problems considered in two and three dimensions. We characterize physical domains by a positive bounded wave speed $c$ and a material density constant $\mu$ --representing, for instance, the permeability in electromagnetics. For time-harmonic excitations of angular frequency $\omega>0$, set the wavenumber $\kappa:=\omega/c$ and define the Helmholtz operator:
$$L_\kappa:\U \mapsto -\Delta \U - \kappa^2 \U.$$
The Sommerfeld radiation condition (SRC) \cite[Section 2.2]{Nedelec} for $\U$ defined over $D^c$ and $\kappa$ reads
\be
\SRC(\U,\kappa) \iff \Big|\frac{\partial}{\partial r} \U-\imath \kappa \U\Big|  = o\left( r^{\frac{1-d}{2}}\right) \textup{ for } r:=\norm{\bx}{2} \to \infty,
\ee
for $d=2,3$. This condition will guarantee uniqueness of solutions \Cref{subsec:prob_helm_scat} and the definition of $\F \in C^\infty(\IS^{d-1})$ the far-field \cite[Lemma 2.5]{chandler2012numerical} such that
\be
\label{eq:far_field}
\Big|\U  -  \exp(\imath \kappa r) r^\frac{1-d}{2} \F(\bx/r) \Big| = \mO\Big(r^{-\frac{1+d}{2}}\Big) \textup{ for } r : = \|\bx\|_2 \to \infty.
\ee
\begin{remark}[Far-field Taylor expansions]\label{remark:FF_taylor}As the far-field does not depend on domain transformations, the Taylor expansion for $\U_t,\U$ and $\U'$ in \Cref{eq:approx_moments} transfers straightly to $\F_t,\F$ and $\F'$, respectively. 
\end{remark}
\subsection{Problem Formulations}
\label{subsec:prob_helm_scat}
We introduce the following BVPs corresponding to Dirichlet, Neumann, impedance and transmission BCs.  By linearity, we can write the total wave as a sum of scattered and incident ones, i.e.~$\U=\U^\textup{sc}+\U^\textup{inc}$. Notation (P$_\boldbeta$) with bold $\boldbeta$ will refer to any of the problems considered. For the sake of clarity, we summarize these notations and illustrate domain perturbations in \Cref{fig:fig}.
\begin{subproblem}[P$_\beta$) ($\beta=0,1,2$]Given $\kappa>0$ and $\U^\textup{inc} \in H_{\textup{loc}}^1(D^c)$ with $L_\kappa \U^\textup{inc}=0$ in $D^c$, we seek $\U\in H_{\textup{loc}}^1(D^c)$ such that
$$
\left\{
\begin{array}{lll}
\Delta\U+ \kappa^2\U  = 0&\textup{ in } D^c,\\
\noalign{\vspace{3pt}}
  \gamma_{\beta} \U = 0&\textup{ on } \Gamma, &\textup{ if } \beta \in \{0,1\}\textup{, or}\\
  \noalign{\vspace{3pt}}
  \gamma_{1} \U  + \imath \eta \gamma_0 \U = 0,~\eta >0 &\textup{ on } \Gamma, & \textup{ if } \beta = 2,\\
  \noalign{\vspace{3pt}}
    \SRC(\U^\textup{sc},\kappa).
\end{array}
\right.
$$
\end{subproblem}
\begin{subproblem}[P$_\textup{3}$]Let $\kappa_i,\mu_i>0$, $i=0,1$, with either $\kappa_0\neq\kappa_1$ or $\mu_0\neq\mu_1$, and $\U^\textup{inc} \in H_{\textup{loc}}^1(D^c)$ with $L_{\kappa_0} \U^\textup{inc}=0$ in $D^c$. We seek $(\U^0,\U^1) \in  H_{\textup{loc}}^1(D^c) \cup H^1(D)$ such that
$$
\left\{
\begin{array}{lll}
\Delta \U^i+\kappa^2_i\U^i &= 0&\textup{ in } D^i \textup{, for } i=0,1,\\
\noalign{\vspace{3pt}}
  [\gamma_{0} \U]_\Gamma &= 0 &\textup{ on } \Gamma,\\
  \noalign{\vspace{3pt}}
  [\mu^{-1}\gamma_{1} \U]_\Gamma &= 0&\textup{ on } \Gamma,\\
  \noalign{\vspace{3pt}}
\SRC(\U^\textup{sc},\kappa_0).
\end{array}
\right.
$$
\end{subproblem}
Exterior problems (P$_\beta$), $\beta=0,1$, represent the {sound-soft} and {-hard} acoustic wave scattering while (P$_2$) and (P$_3$) describe the \textit{exterior impedance} and \textit{transmission} problems, respectively. Notice that (P$_\boldbeta$) is known to be well posed \cite[Chapter 4]{MacLean}. 
\begin{figure}[t]
\begin{minipage}{\linewidth}
\begin{minipage}{0.7\linewidth}
\begin{table}[H]
\renewcommand\arraystretch{1.2}
\begin{center}
\footnotesize
\resizebox{9.1cm}{!} {
\begin{tabular}{|c|c|c|c|} \hline   
 $\beta$ &  Problem  & $\mathfrak{D}$  & BCs\\ \hline\hline
 0 &  Sound-soft & $D^c$ & $\gamma_0 \U= 0$ \\ \hline
 1 &  Sound-hard & $D^c$ & $\gamma_1 \U= 0$ \\ \hline
 2 &  Impedance & $D^c$ & $\gamma_1 \U + \imath \eta \gamma_0 \U = 0$ \\ \hline
 3 &  Transmission & $D^c\cup D $ & $[\gamma_0\U]_\Gamma  = [\mu^{-1}\gamma_1\U]_\Gamma =0$ \\  \hline
\end{tabular}}
\end{center} 
\label{tab:Notations}
\end{table}  
\end{minipage}
\begin{minipage}{0.29\linewidth}
\begin{figure}[H]
\center
\includegraphics[width=.78\textwidth]{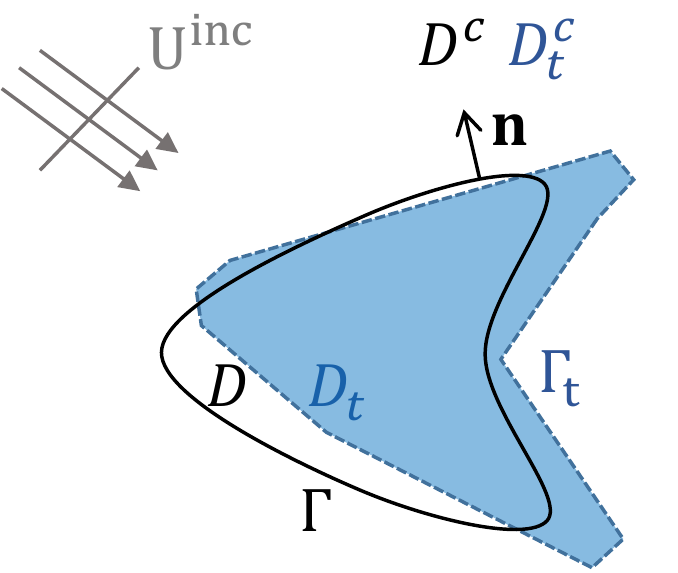}
\label{fig:fig2}       
\end{figure}
\end{minipage}
\end{minipage}
\caption{Overview of (P$_\boldbeta$) (left) and representation of domain transformations (right).}
\label{fig:fig}
\end{figure}

\subsection{Shape derivatives for Helmholtz scattering problems}
\label{subs:helmholtz_scattering_shape}
We summarize the BVPs, denoted by (SP$_\boldbeta$), satisfied by the SD for each BC, as detailed in \cite[Table 5.6]{hiptmair2017shape}.
\begin{subproblem}[SP$_\beta$) ($\beta=0,1,2$]We seek $\U'\in \Hloc^1(D^c)$ solution of
$$
\left\{
\label{eq:S_EP}
\begin{array}{lll}
\Delta\U'+\kappa^2\U' = 0&\textup{ in } D^c,\\
\noalign{\vspace{3pt}}
  \gamma_{\beta} \U' = g_\beta&\textup{ on } \Gamma, & \textup{ if } \beta \in \{0,1\} \textup{, or} \\
  \noalign{\vspace{3pt}}
  \gamma_{1} \U'  + \imath \eta \gamma_0 \U'=  g_2,~\eta >0 &\textup{ on } \Gamma, & \textup{ if } \beta = 2,\\
  \noalign{\vspace{3pt}}
  \SRC(\U',\kappa),
\end{array}
\right.
$$
wherein, for $\U$ being the respective solution of (P$_\beta$), we have
\begin{align*}
g_0 &: =  - \gamma_1\U (\bv \cdot \bn),\\
g_1 &: = \divg_{\Gamma} \left((\bv \cdot \bn) \nabla_{\Gamma} \U\right) + \kappa^2 \gamma_0\U ( \bv \cdot \bn ),\\
g_2 & : =  \divg_{\Gamma} \left((\bv \cdot \bn) \nabla_{\Gamma} \U\right) + \kappa^2 \gamma_0\U ( \bv \cdot \bn ) + \imath \eta  (\bv \cdot \bn) (-\gamma_1 \U- \mathfrak{H} \gamma_0 \U).
\end{align*}
\end{subproblem}
\begin{subproblem}[SP$_\textup{3}$]We seek $\U'=(\U'^0,\U'^1)\in H_{\textup{loc}}^1(D^c) \times H^1(D)$ solution of
$$
\left\{
\label{eq:S_EP_3}
\begin{array}{lll}
\Delta \U'^i + \kappa^2_i \U'^i &= 0&\textup{ in } D^i\textup{, for } i=0,1,\\
\noalign{\vspace{3pt}}
  [ \gamma_{0}  \U' ]_\Gamma&= h_0 &\textup{ on } \Gamma,\\
  \noalign{\vspace{3pt}}
  [ \frac{1}\mu \gamma_{1}  \U' ]_\Gamma &= h_1 &\textup{ on } \Gamma,\\
  \noalign{\vspace{3pt}}
  \SRC(\U'^0,\kappa_0),
\end{array}
\right.
$$
with boundary data built using $\U$ solution of (P$_3$), as follows
\begin{align*}
h_0 & :=  -  [ \gamma_1\U ]_\Gamma (\bv \cdot \bn), \\
h_1 & := \Big[ \frac{1}{\mu} \Big]_\Gamma\divg_{\Gamma} \left((\bv \cdot \bn) \nabla_{\Gamma} \U\right) +  [ \kappa^2  ]_\Gamma\gamma_0\U ( \bv \cdot \bn ).
\end{align*}
\end{subproblem}
In the proposed setting, (SP$_\boldbeta$) is known to be well posed (\textit{cf.}~\cite[Section 3.2]{hiptmair2017shape}).

Having described the deterministic problems, we now consider the random domains described \Cref{subs:domain_transformation} and analyze, for each realization $\U_t(\omega)$, solutions of (P$_\boldbeta$). The prior choice of random domains ensures wellposedness of the perturbed solution $\U_t(\omega)$ and of its shape derivative $\U'(\omega)$ for each realization. Therefore, we apply the FOA framework of \Cref{subs:shape_derivative} to $\U_t(\omega)$, allowing to obtain an accurate approximation of the statistical moments of $\U_t(\omega)$ through:
$$ \U \textup{ and } \mM^k[\U'(\omega)],$$
defined over $\mD$ and $\mD^{(k)}$, respectively --check step 2 in \Cref{picture:steps}. In the same spirit as in \cite{dolz2018hierarchical}, the domain and perturbations considered allow for a bounded shape Hessian in $H_\textup{loc}^1(\mD)$, hence the Lipschitz condition for the SD. As these domains are unbounded, we reduce the problem to the boundary $\Gamma$ via BIEs. Notice that the randomness in $\U'(\omega)$ appears only through $(\bv\cdot \bn)(\omega)$, which appears solely in BCs.
\section{Boundary Reduction}
\label{sec:boundary_reduction}
In this section, we explain how to reduce the Helm-holtz boundary value problems described before as well as their SDs onto the boundary via the integral representation formula. Then, we consider the small random domain counterparts and show how the SDs are equivalently reduced to BIEs comprising deterministic operators with stochastic right-hand side. As mentioned initially, this will fit the general framework described in \cite{vonPetersdorff2006} to compute statistical moments.
\subsection{Boundary integral operators in scattering theory}
\label{subs:bio}
First, we define the duality product between $\bxi_1= (\lambda_1 , \sigma_1)$ and $\bxi_2=(\lambda_2 , \sigma_2)$ both in the Cartesian product space $H^{1/2}(\Gamma)\times H^{-1/2}(\Gamma)$:
$$
\langle \bxi_1,\bxi_2\rangle_\Gamma := \langle \lambda_1 , \sigma_1 \rangle_\Gamma + \langle \lambda_2 , \sigma_2 \rangle_\Gamma.$$
Recall the fundamental solution $G_\kappa(\bx,\by)$ of the Helmholtz equation for $\kappa>0$:
\be
G_\kappa(\bx,\by) :=\left\{
\begin{array}{ll}
\dfrac{\imath}{4}H_0^{(1)}(\kappa \|\bx -\by\|_2)& \textup{ for } d=2, \\\noalign{\vspace{4pt}}
\dfrac{\imath}{4\pi}\dfrac{ \exp (\imath \kappa \|\bx -\by\|_2)}{\|\bx-\by\|_2} & \textup{ for } d=3,
\end{array}
\right.
\ee
where $H_0^{(1)}$ is the zeroth-order Hankel function of the first kind. With this, we introduce the single- and double-layer potentials for $\phi \in L^1(\Gamma)$:
\begin{align*}
\SL_\kappa (\phi) (\bx) &:= \int_\Gamma G_\kappa (\bx-\by) \phi(\by) d\Gamma(\by)  & \bx \in \IR^{d}\backslash \Gamma, \\
\DL_\kappa (\phi) (\bx) &:= \int_\Gamma \frac{\partial}{\partial \bn_\by}  G_\kappa (\bx-\by) \phi(\by) d\Gamma(\by)  & \bx \in \IR^{d}\backslash \Gamma.
\end{align*}
With this at hand, we introduce the block Green's potential:
$$
\fR_\kappa := (\DL_\kappa ,-\SL_\kappa),
$$
and for the sake of convenience, we identify implicitly the Cauchy data (\Cref{eq:cauchy_trace}) with the domain index for $\beta=2$, i.e.:
\be
\label{eq:compact_potential}
\fR_\kappa(\bxi)(\bx) \equiv \fR_{\kappa_i}(\bxi^i)(\bx) \textup{, if } \bx \in D^i,~i=0,1.
\ee
The identity operator $\opI$ and five continuous boundary integral operators in Lipschitz domains for $\kappa >0$, $\eta > 0$ and $|s|\leq 1$ \cite[Theorems 2.25 and 2.26]{chandler2008wave}:
\be
\begin{array}{lll}
\opV_\kappa &: H^{s-1/2}(\Gamma) \to H^{s+1/2}(\Gamma), & \opV_\kappa := \{\!\!\{\gamma_0 \}\!\!\}_\Gamma \circ \SL_\kappa,\\
\opK_\kappa &: H^{s+1/2}(\Gamma) \to H^{s+1/2}(\Gamma), & \opK_\kappa := \{\!\!\{ \gamma_0 \}\!\!\}_\Gamma \circ \DL_\kappa,\\
\opK'_\kappa &: H^{s-1/2}(\Gamma) \to H^{s-1/2}(\Gamma), & \opK'_\kappa := \{ \!\!\{\gamma_1 \}\!\!\}_\Gamma \circ \SL_\kappa,\\
\opW_\kappa &: H^{s+1/2}(\Gamma) \to H^{s-1/2}(\Gamma), & \opW_\kappa := -\{ \!\!\{\gamma_1\}\!\!\}_\Gamma \circ \DL_\kappa,\\
\opB'_{\kappa,\eta} &:H^{s+1/2}(\Gamma) \to H^{s-1/2}(\Gamma),& \opB'_{\kappa,\eta}:=\opW_\kappa -\imath \eta \left(\frac{1}{2} \opI + \opK'_\kappa\right).
\end{array}
\ee
Also, we introduce the following operator:
$$
\opA_\kappa : = \begin{bmatrix}
       -\opK_\kappa & \opV_\kappa         \\[0.3em]
       \opW_\kappa          & \opK'_\kappa
     \end{bmatrix},
$$
along with 
$$
\widehat{\opA}_{\kappa,\mu}: = \begin{bmatrix}
       1 & 0        \\[0.3em]
       0 & 1 / \mu
     \end{bmatrix}   \begin{bmatrix}
       -\opK_\kappa & \opV_\kappa          \\[0.3em]
       \opW_\kappa          & \opK'_\kappa
     \end{bmatrix} \begin{bmatrix}  1& 0        \\[0.3em]
       0 & \mu
     \end{bmatrix} =    \begin{bmatrix}
       -\opK_\kappa &  \mu \opV_\kappa          \\[0.3em]
      1/\mu \opW_\kappa          & \opK'_\kappa
     \end{bmatrix}.
$$

Next, we consider a \textit{radiating solution} $\U$, i.e.~$\U=(\U^0,\U^1)$~such that $L_\kappa \U^i =0, i=0,1$, and $\U^0$ with Sommerfeld radiation conditions \cite[Section 3.6]{sauter}. Therefore, the following representation formula holds:
\be
\label{eq:representation_formula}
\U= \DL_{\kappa}([\gamma_0 \U]_\Gamma) - \SL_{\kappa}([\gamma_1 \U]_\Gamma) = \fR_\kappa ([\bxi]_\Gamma)\textup{ in } D^0\cup D^1.
\ee
Its Cauchy data $\bxi^i=(\gamma_0\U^i,\gamma_1\U^i)=:(\lambda^i,\sigma^i)\in H^{1/2}(\Gamma)\times H^{-1/2}(\Gamma)$ satisfy
\be
\renewcommand\arraystretch{1.3}
\label{eq:cald}\begin{array}{cl}
\textup{(Interior)} &\bxi^1 =  \left(\frac{1}{2}\opI+ \opA_{\kappa}\right) \bxi^1= : \fP_\kappa^1 \bxi^1,\\
\textup{(Exterior)}   &\bxi^0 =  \left(\frac{1}{2}\opI -\opA_{\kappa} \right)\bxi^0=: \fP_\kappa^0 \bxi^0 = (\opI - \fP_\kappa^1)\bxi^0.
\end{array}
\ee
Notice that the above identities are also valid for $\widehat{\opA}_{\kappa_0,\mu}$ and $\widehat{\opA}_{\kappa_1,\mu}$. Operators $\fP_\kappa^0,\fP_\kappa^1$ are dubbed exterior and interior \textit{Calder\'on} projectors. They share the interesting property that for $i=0,1$, $(\fP_\kappa^i)^2 = \fI$, allowing for Calder\'on-based operator preconditioning (see \Cref{subs:prec}). 

Lastly, we introduce $S_\textup{Dir}(D)\equiv S_0(D)$ and $S_\textup{Neum}(D)\equiv S_1(D)$ the countable set accumulating only at infinity of strictly positive eigenvalues of Helmholtz problem with homogeneous Dirichlet and Neumann boundary conditions \cite[Section 3.9.2]{sauter}. 

\subsection{Tensor BIEs}
\label{subs:tensor_bie}
We now show that the FOA analysis for (P$_\boldbeta$) can be reduced to  (B$_\boldbeta$) defined further in \Cref{pb:B_Generic}, consisting in two deterministic first kind BIEs including a tensor one (refer to \Cref{picture:steps}). To begin with, we consider both deterministic problems (P$_\boldbeta$) and (SP$_\boldbeta$), and show that they can be reduced to two wellposed BIEs of the form: 
\begin{problem}[Deterministic BIEs]\label{pb:BIE_Generic}Provided $\fZ \in \mL(X,Y)$ and $\fB\in \mL(Y,Y)$ for separable Hilbert spaces $X,Y$, $f\in Y$ and $g\in Y$, we seek $\xi,\xi' \in X$ such that:
\be
\left\{
\begin{array}{lll}
\fZ \xi &= f  &\textup{ on } \Gamma,\\
\noalign{\vspace{2pt}}
\fZ \xi' & = \fB g &\textup{ on } \Gamma.
\end{array}
\right.
\ee
\end{problem}
Equivalence between problems couple ((P$_\boldbeta$),(SP$_\boldbeta$)) and \Cref{pb:BIE_Generic} is derived through the following steps:
\begin{enumerate}
  \item[(i)] Using \Cref{subs:bio}, we perform the boundary reduction for (P$_\boldbeta$) and (SP$_\boldbeta$), leading to \Cref{pb:BIE_Generic} (see \Cref{appendix:A});
  \item[(ii)]We prove that \Cref{pb:BIE_Generic} is well-posed in adapted Sobolev spaces using Fredholm theory. Notice that we remove the spurious eigenvalues to guarantee injective boundary integral operators.
\end{enumerate}
Step (ii) is extensively surveyed for $\beta=0,1,2$ in \cite{chandler2008wave}; see Table 2.1 and Theorem 2.25 for $\beta=0,1$, and refer to Section 2.6 for $\beta=2$. The transmission problem ($\beta=2$) is analyzed in \cite[Section 3]{claeys2012multi}.

After reducing the deterministic problem to the boundary, we retake \Cref{subs:shape_derivative} and consider the random counterparts of (P$_\boldbeta$) and (SP$_\boldbeta$), leading to $\U$ and $\U'(\omega)$, and perform correspondingly the boundary reduction. The generic random BIEs for the SD read:
\begin{problem}[Random BIEs]Provided $\fZ \in \mL(X,Y)$ and $\fB\in \mL(Y,Y)$ for separable Hilbert spaces $X,Y$, $g\in L^k(\Omega,\IP;Y)$, for $k\in \IN_2$, we seek $\xi'\in L^k(\Omega,\IP; X)$ such that 
\be
\fZ \xi = \fB g \textup{ on } \Gamma.
\ee
\end{problem}
Applying Theorem 6.1 in \cite{vonPetersdorff2006}, we deduce that the tensor operator equation admits a unique solution $\xi'\in L^k(\Omega,\IP;Y)$ and that $\mM^k[\xi'(\omega)]\in Y$. Therefore, we arrive at a tensor BIE with stochastic right-hand sides, providing the final form of the wellposed deterministic tensor operator BIEs (B$_\boldbeta$):

\begin{problem}[B$_\boldbeta$) (Formulation for the BIEs]\label{pb:B_Generic}Given $\fZ\in \mL(X,Y)$, $\fB \in \mL(Y,Y)$ for separable Hilbert spaces $X,Y$, $k\in \IN_2$, $f\in Y$ and $\mM^k[g]\in Y^{(k)}$, seek $\xi \in X,\Sigma^k \in X^{(k)}$ such that:
\be
\left\{
\begin{array}{lll}
\fZ \xi &= f  &\textup{ on } \Gamma,\\
\noalign{\vspace{2pt}}
\fZ^{(k)} \Sigma^k & = \fB^{(k)}\mM^k[g] &\textup{ on } \Gamma^{(k)}.
\end{array}
\right.
\ee
\end{problem}
We now detail the resulting sets of BIEs for each problem (B$_\boldbeta)$ as well as for. As in \cite[Section 6.2]{vonPetersdorff2006}, notice that the statistical moments and the layer potentials commute by Fubini's theorem.
\begin{subproblem}[B$_\textup{0}$]If $\kappa^2 \notin S_\textup{Dir}(D)$, $\gamma_0\U^\textup{inc} \in H^{1/2}(\Gamma)$ and $\mM^k[g_0]\in H^{1/2}(\Gamma)^{(k)}$, $k\in \IN_2$, we seek $\gamma_1 \U\in H^{-1/2}(\Gamma)$ and $\mM^k[\gamma_1\U']\in H^{-1/2}(\Gamma)^{(k)}$ such that:
\be
\left\{
\begin{array}{llll}
\opV_\kappa \gamma_1 \U&= \gamma_0\U^\textup{inc}  &\textup{ on } \Gamma,\\
\noalign{\vspace{2pt}}
\opV_\kappa^{(k)} \mM^k[\gamma_1 \U']& = \left(  -\frac{1}{2}\opI +\opK_\kappa \right)^{(k)} \mM^k[ g_0]&\textup{ on } \Gamma^{(k)}.
\end{array}
\right.
\ee
Then, 
\begin{align*} 
\U  &= \U^\textup{inc} - \SL_\kappa \gamma_1 \U \textup{ in } D^c,\\
\mM^k[\U']&=  \mM^k[-  \SL_\kappa \gamma_1 \U' + \DL_\kappa g_0] \textup{ in } (D^c)^{(k)}\\
&=  \fR_\kappa^{(k)} \mM^k[(g_0,\gamma_1\U')] \textup{ in } (D^c)^{(k)}.
\end{align*}
\end{subproblem}
\begin{subproblem}[B$_\textup{1}$]If $\kappa^2 \notin S_\textup{Neum}(D)$, $\gamma_1\U^\textup{inc} \in H^{-1/2}(\Gamma)$ and $\mM^k[g_1]\in H^{-1/2}(\Gamma)^{(k)}$, $k\in \IN_2$, we seek $\gamma_0 \U\in H^{1/2}(\Gamma)$ and  $\mM^k[\gamma_0\U'] \in H^{1/2}(\Gamma)^{(k)}$ such that:
\be
\left\{
\begin{array}{llll}
\opW_\kappa \gamma_0 \U&= \gamma_1\U^\textup{inc}  &\textup{ on } \Gamma, \\
\noalign{\vspace{2pt}}
\opW_\kappa^{(k)} \mM^k[\gamma_0 \U']& = \left(- \left(\frac{1}{2} \opI +  \opK'_\kappa  \right)\right)^{(k)} \mM^k[g_1] &\textup{ on } \Gamma^{(k)}.\\
\end{array}
\right.
\ee
Also, 
\begin{align*} 
\U &= \U^\textup{inc} + \DL_\kappa \gamma_0 \U \textup{ in } D^c,\\
\mM^k[\U'] &= \mM^k[-  \SL_\kappa \gamma_1 \U' + \DL_\kappa g_0]\textup{ in } (D^c)^{(k)}\\
&=  \fR_\kappa^{(k)} \mM^k[(\gamma_0\U',g_1)] \textup{ in } (D^c)^{(k)}.
\end{align*}
\end{subproblem}
\begin{subproblem}[B$_\textup{2}$]If $\kappa^2 \notin S_\textup{Neum}(D)$, $\gamma_1\U^\textup{inc}\in H^{-1/2}(\Gamma)$ and $\mM^k[g_2] \in H^{-1/2}(\Gamma)^{(k)}$, $k\in \IN_2$, we seek $\gamma_0 \U \in H^{1/2}(\Gamma)$ and $\mM^k[\gamma_0\U']\in H^{1/2}(\Gamma)^{(k)}$ such that:
\be
\left\{
\begin{array}{llll}
\opB'_{\kappa,\eta}\gamma_0 \U &= \gamma_1 \U^\textup{inc} &\textup{ on } \Gamma,\\
\noalign{\vspace{2pt}}
(\opB'_{\kappa,\eta})^{(k)}\mM^k[\gamma_0 \U']& =  \left(\frac{1}{2}\opI + \opK'_\kappa \right)^{(k)} \mM^k[g_2] &\textup{ on } \Gamma^{(k)}.\\
\end{array}
\right.
\ee
Moreover, 
\begin{align*} 
\U &= \U^\textup{inc} + (\imath\eta  \SL_\kappa + \DL_\kappa)  \gamma_0 \U \textup{ in } D^c,\\
\mM^k[\U'] &= \mM^k[(\imath\eta  \SL_\kappa + \DL_\kappa) \gamma_0\U' -\SL_\kappa g_2  ]\textup{ in } (D^c)^{(k)}\\
&=  \fR_\kappa^{(k)} \mM^k[(\gamma_0\U' , g_2 - \imath \eta \gamma_0\U')] \textup{ in } (D^c)^{(k)}.
\end{align*}
\end{subproblem}
\begin{subproblem}[B$_\textup{3}$]For $\bxi^\textup{inc} := (\gamma_0 \U^\textup{inc}, \gamma_1 \U^\textup{inc})  \in\left[H^{1/2}(\Gamma) \times H^{-1/2}(\Gamma)\right]^{(k)}$ and $\mM^kh=\mM^k(h_0,h_1)\in  (H^{1/2}(\Gamma) \times H^{-1/2}(\Gamma))^{(k)}$, for $k\in \IN_2$, we seek $\bxi^0 := (\gamma_0 \U^0, \gamma_1 \U^0 ) \in H^{1/2}(\Gamma) \times H^{-1/2}(\Gamma)$ and $\mM^k[\bxi'] \equiv \mM^k[{\bxi'}^0] \in (H^{1/2}(\Gamma) \times H^{-1/2}(\Gamma))^{(k)}$ such that:
\be
\left\{
\label{eq:B_TP}
\begin{array}{lll}
\left(\widehat{\opA}_{\kappa_0,\mu_0} + \widehat{\opA}_{\kappa_1,\mu_1}\right) \bxi^0&= \bxi^\textup{inc} &\textup{ on } \Gamma,\\
\noalign{\vspace{3pt}}
\left(\widehat{\opA}_{\kappa_0,\mu_0} + \widehat{\opA}_{\kappa_1,\mu_1}\right)^{(k)} \mM^k[\bxi'] &= \left(\frac{1}{2}\opI + \widehat{\opA}_{\kappa_1,\mu_1} \right)^{(k)} \mM^k[h] &\textup{ on } \Gamma^{(k)}.\\
\end{array}
\right.
\ee
Also,
\begin{align*} 
\U(\bx) &= \U^\textup{inc}(\bx) - \SL_{\kappa_0} \gamma_1 \U^0 +  \DL_{\kappa_0} \gamma_0 \U^0, ~ \bx \in D^c,\\
\U(\bx) &=  - \SL_{\kappa_1} \gamma_1 \U^1 +  \DL_{\kappa_1} \gamma_0 \U^1, ~ \bx \in D,\\
\mM^k[\U'] & = \mM^k [\fR_\kappa(\bxi)] = \fR_\kappa^{(k)} \mM^k [\bxi] \textup{ in } \mD^{(k)}.
\end{align*}
\end{subproblem}
Ultimately, we sum up the functional spaces and BIEs for (PB$_\boldbeta$) in \Cref{tab:NotationsBIEs}. Also, corresponding Sobolev spaces of higher regularity will be denoted $X^s$, $Y^s$, for $s\geq0$, with $Y^0\equiv Y$ and $X^0\equiv X$.  Operator $\fC$ refers to the left-preconditioner that is used for operator preconditioning purposes, as detailed later on in \Cref{subs:prec}. 
\begin{table}[t]
\renewcommand\arraystretch{1.7}
\begin{center}
\footnotesize
\resizebox{13cm}{!} {
\begin{tabular}{|c|c|c|c|c||c|} \hline   
 $\beta$ &  Problem  & $X^s$ & $Y^s$  & $\fZ$  & $\fC$\\ \hline\hline
 0 &  Soft & $H^{-1/2+s}(\Gamma)$ &$H^{1/2+s}(\Gamma)$ & $\opV_\kappa$  & $\opW_\kappa$\\ \hline
 1 &  Hard & $H^{1/2+s}(\Gamma)$ & $H^{-1/2+s}(\Gamma)$ & $\opW_\kappa$& $\opV_\kappa$ \\ \hline 
 2 &  Impedance &$H^{1/2+s}(\Gamma)$& $H^{-1/2+s}(\Gamma)$  & $\opW_\kappa  - \imath \eta \left(\frac{1}{2}\opI + \opK'_\kappa \right)$  &  $\opV_\kappa$\\ \hline
 3 &  Transmission & $H^{1/2+s}(\Gamma)\times H^{-1/2+s}(\Gamma)$& $H^{1/2+s}(\Gamma)\times H^{-1/2+s}(\Gamma)$ & $(\widehat{\opA}_{\kappa_0,\mu_0} + \widehat{\opA}_{\kappa_1,\mu_1})$  & $(\widehat{\opA}_{\kappa_0,\mu_0} + \widehat{\opA}_{\kappa_1,\mu_1})$ \\  \hline
\end{tabular}}
\end{center} 
\caption{Overview of the BIEs for (B$_\boldbeta$) and associated operator preconditioner employed in \Cref{subs:prec}.}
\label{tab:NotationsBIEs}
\end{table}
\section{Galerkin Method and Sparse Tensor Elements}
\label{sec:galerkin_CT}
We now aim to solve numerically the variational forms arising from the BIEs described in \Cref{pb:B_Generic}. Let us introduce a nested shape-regular and quasi-uniform family $\{\mM_l\}_{l\in \IN_0}$ of surface  triangulations consisting of triangles or quadrilaterals, with each level $l$ associated to a meshwidth $h_l>0$. For $\beta =0,1$, we define the associated boundary element spaces $V^\beta_0\subset V^\beta_1 \subset \cdots V^\beta_l \subset H^{\frac{1}{2}-\beta}(\Gamma)$:
\begin{align*}
V_l^0 &: = \{ \lambda \in C(\Gamma) : \lambda|_K \in \fP_p(K) , ~\forall K \in \mM_l, ~p \in \IN_1\},\\
V_l^1 &: = \{ \sigma \in L^2(\Gamma) : \sigma|_K \in \fP_{p}(K) , ~\forall K \in \mM_l,~p \in \IN_0\}.
\end{align*}
where $\mP_p(K)$ stands for the space of polynomials of degree $\leq p $, $p \in \IN_0$ on the cell $K$. Notice that under regular enough Neumann data, i.e.~Neumann traces belong to $H^{(d-1)/2+\delta}(\Gamma)$ \cite[Theorem 2.5.4]{sauter} for any $\delta >0$, we can also use piecewise continuous functions e.g., piecewise linear functions $\IP^1$, as in \Cref{sec:Numres}. Afterwards, we introduce usual best approximation estimates for the $h$-version of boundary elements \cite[Chapter 9]{sauter}.
\begin{lemma}[Interpolation error for Dirichlet traces]
\label{lemma:interpV}
For $0 \leq t \leq s \leq p + 1$ and all $\lambda \in H^s(\Gamma)$, there holds
\be
\inf_{v_l \in V^0_l} \|\lambda - v_l \|_{H^t(\Gamma)} \leq C h^{s-t} \|\lambda\|_{H^s(\Gamma)},
\ee
where $C>0$ is independent of $h$ and $\lambda$.
\end{lemma}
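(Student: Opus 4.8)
The plan is to exhibit an explicit quasi-interpolation operator $I_l : H^t(\Gamma) \to V_l^0$, establish an elementwise approximation bound for it on a reference cell, and then transfer and assemble these bounds across the quasi-uniform mesh $\mM_l$ to obtain the global estimate; the desired infimum is then bounded by the error of this single competitor $I_l\lambda \in V_l^0$. Since the range $0 \le t \le s \le p+1$ includes values of $s$ below the Sobolev embedding threshold where nodal point evaluation is ill-defined, I would use a Scott--Zhang/Cl\'ement-type operator built from local $L^2$-projections against a dual basis rather than nodal interpolation. The two properties I need from $I_l$ are: (a) local polynomial reproduction, namely $I_l q = q$ on each cell $K$ whenever $q \in \fP_p$ on the patch of $K$; and (b) local $H^t$-stability, $\norm{I_l \lambda}{H^t(K)} \le C \norm{\lambda}{H^t(\widetilde K)}$, where $\widetilde K$ is the union of cells sharing a node with $K$, whose cardinality is bounded by shape-regularity.

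For integer orders I would argue on a fixed reference cell $\widehat K$. There the map $v \mapsto v - \widehat I\, v$ is bounded linear and annihilates $\fP_p$, so the Bramble--Hilbert lemma yields $\norm{v - \widehat I v}{H^t(\widehat K)} \le C\, |v|_{H^s(\widehat K)}$ for integer $0 \le t \le s \le p+1$. Pulling this back to a physical cell $K$ through the $C^{2,1}$ chart composed with the affine reference map, shape-regularity keeps all Jacobian factors uniformly bounded above and below, and the standard scaling of Sobolev seminorms produces the factor $h^{s-t}$, giving $\norm{\lambda - I_l \lambda}{H^t(K)} \le C\, h^{s-t}\, |\lambda|_{H^s(\widetilde K)}$. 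Raising to the appropriate power, summing over all $K \in \mM_l$, and using that each patch $\widetilde K$ overlaps only a bounded number of neighbours then delivers $\norm{\lambda - I_l \lambda}{H^t(\Gamma)} \le C\, h^{s-t}\, \norm{\lambda}{H^s(\Gamma)}$ for integer $s,t$.

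The main obstacle is the genuinely fractional case, because for non-integer $s$ the seminorm on $\Gamma$ is the nonlocal Sobolev--Slobodeckij double integral over $\Gamma \times \Gamma$, so the purely cellwise summation above does not close directly. I would handle this by real interpolation: having established continuity of the single operator $\opI - I_l$ as a map $H^{s_j}(\Gamma) \to H^{t_j}(\Gamma)$ at integer endpoints $(s_j,t_j)$ bracketing $(s,t)$, the $K$-method functor upgrades these endpoint estimates to the intermediate fractional orders while preserving the rate $h^{s-t}$ in the scale of bounds. Care is needed to choose the bracketing endpoints so that $0 \le t \le s \le p+1$ holds along the whole interpolation segment and so that $I_l$ maps into $V_l^0 \subset H^{1/2}(\Gamma)$ consistently at both ends; once the endpoints are fixed, the remainder is the standard reiteration argument for the Sobolev scale on a $C^{2,1}$ manifold, recovering the $h$-version estimate of \cite[Chapter 9]{sauter}.
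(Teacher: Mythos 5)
The paper offers no proof of this lemma: it is stated as a standard $h$-version best-approximation estimate and simply cited from \cite[Chapter 9]{sauter}. Your outline --- a Scott--Zhang/Cl\'ement quasi-interpolation operator with local polynomial reproduction, Bramble--Hilbert on the reference cell, scaling and summation over the quasi-uniform mesh for integer orders, and real interpolation of the fixed error operator $\opI - I_l$ between integer endpoints to reach fractional $s$ and $t$ --- is precisely the standard argument behind that citation and is sound; the only caveat (inherited from the statement rather than from your proof) is that the estimate can only hold for $t$ small enough that $V_l^0\subset H^t(\Gamma)$, i.e.\ $t<3/2$, even though the lemma formally allows $t$ up to $p+1$.
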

\begin{lemma}[Interpolation error for Neumann traces]
\label{lemma:interpW}
For $0 \leq t \leq s \leq p + 1$ and all $\sigma \in H^s(\Gamma)$, there holds
\be
\inf_{v_l \in V^1_l} \|\sigma - v_l \|_{H^{-t}(\Gamma)} \leq C h^{s+t} \|\sigma\|_{H^s(\Gamma)},
\ee
where $C>0$ is independent of $h$ and $\lambda$.
\end{lemma}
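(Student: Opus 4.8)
The plan is to exhibit a concrete quasi-optimal approximant and bound its error directly, rather than attacking the infimum abstractly. Since $V^1_l$ consists of \emph{discontinuous} piecewise polynomials of degree $\le p$, the natural choice is the $L^2(\Gamma)$-orthogonal projection $Q_l : L^2(\Gamma) \to V^1_l$, which acts element by element and satisfies the orthogonality $\langle \sigma - Q_l \sigma, v_l\rangle_\Gamma = 0$ for every $v_l \in V^1_l$. Because $Q_l \sigma \in V^1_l$, it suffices to prove the claimed bound with the infimum replaced by $\|\sigma - Q_l\sigma\|_{H^{-t}(\Gamma)}$.

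First I would establish the positive-order ($t=0$) estimate $\|\sigma - Q_l\sigma\|_{L^2(\Gamma)} \le C h^s \|\sigma\|_{H^s(\Gamma)}$ for $0 \le s \le p+1$. This follows from element-wise scaling to a reference cell, a Bramble--Hilbert argument giving local polynomial approximation of order $p+1$, and the quasi-uniformity and shape-regularity of $\{\mM_l\}_{l}$, which render the constants uniform across cells; summing the local contributions and using $h_l \simeq h$ yields the global bound. The $C^{2,1}$ regularity of $\Gamma$ guarantees that the surface charts and their Jacobians are controlled, so the pulled-back Sobolev norms on the reference element are equivalent to the intrinsic ones with $h$-independent constants. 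This already settles the case $t=0$.

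For the genuinely negative case $0 < t \le s$, I would run an Aubin--Nitsche duality argument. Writing the dual norm as
\be
\|\sigma - Q_l\sigma\|_{H^{-t}(\Gamma)} = \sup_{0 \neq \phi \in H^t(\Gamma)} \frac{\langle \sigma - Q_l\sigma, \phi\rangle_\Gamma}{\|\phi\|_{H^t(\Gamma)}},
\ee
I would exploit the orthogonality of $Q_l$ to subtract $Q_l\phi \in V^1_l$ from the test function, obtaining $\langle \sigma - Q_l\sigma, \phi\rangle_\Gamma = \langle \sigma - Q_l\sigma, \phi - Q_l\phi\rangle_\Gamma$. Cauchy--Schwarz in $L^2(\Gamma)$ then splits this into $\|\sigma - Q_l\sigma\|_{L^2(\Gamma)}\,\|\phi - Q_l\phi\|_{L^2(\Gamma)}$, and applying the $t=0$ estimate twice --- at rate $h^s$ for $\sigma$ and at rate $h^t$ for $\phi$ (here $t \le p+1$ since $t \le s \le p+1$) --- produces the factor $h^{s+t}\|\sigma\|_{H^s(\Gamma)}\|\phi\|_{H^t(\Gamma)}$. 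Dividing by $\|\phi\|_{H^t(\Gamma)}$ and taking the supremum gives the assertion. Note that no $H^t$-stability of $Q_l$ is needed: the $L^2$-orthogonality absorbs that difficulty.

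The hard part will be the transfer of the local reference-element estimates to the curved manifold $\Gamma$, together with the compatibility of the duality pairing with the fractional, negative-order Sobolev norms defined intrinsically on $\Gamma$. Concretely, one must check that the atlas-based definition of $H^{-t}(\Gamma)$ entering the supremum coincides, up to equivalent norms, with the dual of $H^t(\Gamma)$ under the $L^2(\Gamma)$ pairing, and that the Bramble--Hilbert constants stay uniform once the metric distortion of the charts is absorbed. Given the $C^{2,1}$ smoothness of $\Gamma$ and the quasi-uniformity of the meshes, these are standard but technical verifications; the remainder is the routine duality computation sketched above.
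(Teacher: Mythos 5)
Your argument is correct and is precisely the standard proof of this best-approximation estimate: the paper itself does not prove the lemma but cites \cite[Chapter 9]{sauter}, where the result is obtained exactly as you describe, via the $L^2(\Gamma)$-orthogonal projection onto the discontinuous space, an element-wise Bramble--Hilbert estimate for $t=0$, and an Aubin--Nitsche duality argument (using the orthogonality to insert $\phi - Q_l\phi$) to gain the extra factor $h^{t}$ for negative orders. The caveats you flag --- $t\le p+1$ so the projection error of the dual test function is available, identification of $H^{-t}(\Gamma)$ with the $L^2$-dual of $H^{t}(\Gamma)$, and uniformity of chart constants on the $C^{2,1}$ surface --- are the right ones and are handled in the cited reference.
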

\subsection{First-order statistical moments}
Adopting the notation in \Cref{tab:NotationsBIEs}, we define $X_L\subset X$ from $V_L^0$ and $V_L^1$, and arrive at the Galerkin formulation for $\xi$:
\begin{problem}[Galerkin formulation]\label{pb:var_1st}Seek $\xi_L \in X_L\subset X$ such that:
\be
\langle \fZ\xi_L , \phi_L \rangle_\Gamma = \langle f , \phi_L \rangle_\Gamma, ~ \forall \phi_L \in X_L.
\ee
\end{problem}
We define $N_L := \card(X_L)$. Classical results for coercive operators \cite{sauter} ensure that there exists a minimum resolution $L_0$ such that the discrete solution is well defined and converges quasi-optimally in $X$. Thus, provided that $\xi \in X^s$ for any $0\leq s \leq p+1$, by \Cref{lemma:interpV,lemma:interpW} it holds
\be
\|\xi- \xi_L \|_{X} \leq C h^{s} \|\xi\|_{X^s}.
\ee
\subsection{Higher-order statistical moments and CT}
Having introduced the tensor $L^2$-product $\langle\cdot,\cdot \rangle_{\Gamma^{(k)}}$ \cite{combi2}, we state the tensor deterministic variational forms of the BIEs:

\begin{problem}[Tensor Galerkin]\label{pb:full}Given $k\in \IN_2$, seek $\Sigma_L^k  \in X_L^{(k)}$ such that 
\be
\langle \fZ^{(k)}\Sigma_L^k  , \Theta_L^k \rangle_{\Gamma^{(k)}} = \langle \fB^{(k)}\mM^k[g] , \Theta_L^k \rangle_{\Gamma^{(k)}}, ~ \forall \Theta_L^k\in X_L^{(k)}.
\ee
\end{problem}
As shown in \cite[Section 3.5]{vonPetersdorff2006}, there is a $L_0(\kappa)\in\IN_0$ for which, for all $L\geq L_0(\kappa)$, the tensorized problem admits a discrete inf-sup, and has a unique solution converging quasi-optimally in $X^{(k)}$. From here, we deduce the following error estimates
\be
\|\Sigma^k- \Sigma_L^k\|_{X^{(k)}} \leq C h^{s} \|\Sigma^k\|_{(X^s)^{(k)}}.
\ee
provided that $\Sigma^k\in (X^s)^{(k)}$, for any $0 \leq s \leq p+1$. Now, we introduce the complement spaces:
$$
W_0 := X_0 ,~ W_l := X_l \backslash X_{l-1},~ l>0,
$$
and consider the sparse tensor product space:
\be
\widehat{X}_L^{(k)}(L_0)=  \bigoplus_{\|\underline{l}\|_1 \leq L + (k-1)L_0} W_{l_1} \otimes\cdots \otimes W_{l_k}
\ee
Then, we can state the following stability condition.
\begin{lemma}[{\cite[Theorem 5.2]{vonPetersdorff2006}}]
For $k\in\IN_2$, there exists $L_0(k)$ and $\hat{c}_S$ such that for all $L\geq L_0$, it holds
\be
\inf_{0\neq \hat{\Sigma} \in \widehat{X}_L^{(k)}} \sup_{0\neq \hat{\Theta} \in \widehat{X}_L^{(k)}} \frac{ \langle \fZ^{(k)} \hat{\Sigma}, \hat{\Theta} \rangle_{\Gamma^{(k)}}}{\|\hat{\Sigma}\|_{X^{(k)}} \|\hat{\Theta}\|_{X^{(k)}}} \geq \frac{1}{\hat{c}_S} >0.
\ee
\end{lemma}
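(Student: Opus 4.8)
The plan is to read the claim as a discrete inf--sup estimate for the sesquilinear form $\langle\fZ^{(k)}\cdot,\cdot\rangle_{\Gamma^{(k)}}$, obtained by transporting the single-level ellipticity of $\fZ$ to the sparse tensor space through a coercivity-plus-compact-perturbation argument in the spirit of \cite[Section 5]{vonPetersdorff2006}.

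First I would record the single-level structure. By \cite{chandler2008wave,sauter}, each operator $\fZ$ of \Cref{tab:NotationsBIEs} splits as $\fZ=\fZ_0+\fZ_1$ with $\fZ_0\in\mL(X,Y)$ \emph{$X$-coercive}, i.e.\ $\Real\langle\fZ_0 v,v\rangle_\Gamma\geq\alpha\norm{v}{X}^2$ for some $\alpha>0$ and all $v\in X$, and $\fZ_1\in\mL(X,Y)$ \emph{compact}. For $\beta=0,1$ the coercive part is the Laplace single-layer or hypersingular operator $\opV_0$, $\opW_0$, and the wavenumber-dependent difference $\fZ_1$ is smoothing, hence compact; the impedance and transmission operators inherit the same splitting since their off-diagonal and frequency corrections are of lower order.

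Next I would tensorize. By multilinearity, $\fZ^{(k)}=\fZ_0^{(k)}+\widetilde{\fZ}$, where $\widetilde{\fZ}:=\fZ^{(k)}-\fZ_0^{(k)}$ is a finite sum of $k$-fold tensor factors each containing at least one copy of $\fZ_1$; since a tensor product with one compact factor is compact on the Hilbert tensor product, $\widetilde{\fZ}:X^{(k)}\to Y^{(k)}$ is compact. The coercive leading part $\fZ_0^{(k)}$ is $X^{(k)}$-coercive with constant $\alpha^k$: for the self-adjoint positive model operators this is the statement that $\inf\mathrm{spec}(\fZ_0^{(k)})=(\inf\mathrm{spec}\,\fZ_0)^k$, and the general complex case reduces to it by passing to the symmetric part as in \cite[Lemma 5.1]{vonPetersdorff2006}. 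Because $\widehat{X}_L^{(k)}\subset X^{(k)}$ is a closed subspace, this coercivity restricts verbatim, so the principal form already satisfies the sought inf--sup on $\widehat{X}_L^{(k)}$ with constant at least $\alpha^k$, \emph{uniformly in $L$}.

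It then remains to absorb $\widetilde{\fZ}$. Writing $\widehat{P}_L$ for the $X^{(k)}$-orthogonal projection onto $\widehat{X}_L^{(k)}$, the approximation property of the sparse tensor space gives $\widehat{P}_L\to\opI$ strongly; combined with compactness this yields operator-norm convergence $\norm{\widetilde{\fZ}\,(\opI-\widehat{P}_L)}{\mL(X^{(k)},Y^{(k)})}\to0$. A duality (Aubin--Nitsche) argument then shows that for all $L\geq L_0(k)$ the perturbation lowers the inf--sup constant by at most $\alpha^k/2$, leaving $\fZ^{(k)}=\fZ_0^{(k)}+\widetilde{\fZ}$ with discrete inf--sup constant $1/\hat{c}_S\geq\alpha^k/2>0$ on $\widehat{X}_L^{(k)}$, which is the assertion. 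The main obstacle is exactly this last step on the \emph{sparse} space: unlike the full tensor product, $\widehat{P}_L$ is not a tensor product of one-level projections but a telescoping combination of them, so the norm decay of $\widetilde{\fZ}\,(\opI-\widehat{P}_L)$ cannot be read off factorwise and must instead be extracted from the sparse approximation estimate together with the smoothing of $\widetilde{\fZ}$; this is the quantitative content of \cite[Section 5]{vonPetersdorff2006} that fixes the stability threshold $L_0(k)$.
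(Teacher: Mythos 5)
The paper offers no proof of this lemma: it is imported verbatim as Theorem 5.2 of \cite{vonPetersdorff2006}, so the only comparison available is with the argument in that reference. Your reconstruction follows a plausible general philosophy (single-level coercive-plus-compact structure, tensorization, stability for $L$ large), but it has a genuine gap at its central step: the claim that $\widetilde{\fZ}:=\fZ^{(k)}-\fZ_0^{(k)}$ is compact ``since a tensor product with one compact factor is compact on the Hilbert tensor product'' is false. A tensor product $\fB\otimes\fA$ of nonzero bounded operators is compact if and only if \emph{both} factors are compact --- this is precisely the criterion the paper itself quotes from \cite{zanni2015note} in \Cref{subs:prec} to show that $\fI\otimes\fK$ is \emph{not} compact. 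In your decomposition for $k=2$ the cross-terms $\fZ_0\otimes\fZ_1$ and $\fZ_1\otimes\fZ_0$ each contain the non-compact (indeed boundedly invertible) factor $\fZ_0$, hence are not compact, so $\widetilde{\fZ}$ is not a compact perturbation of $\fZ_0^{(k)}$. Consequently the absorption step --- operator-norm convergence of $\widetilde{\fZ}\,(\opI-\widehat{P}_L)$ deduced from compactness plus strong convergence of projections --- does not go through, and the coercive-plus-compact framework collapses exactly at the tensor level. (A secondary caveat: coercivity of a non-self-adjoint form does not tensorize in general, since the real part of a product is not the product of real parts; here one is saved only because the principal parts $\opV_0$, $\opW_0$ are self-adjoint and positive definite.)

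The proof in \cite{vonPetersdorff2006} never asserts compactness of the tensorized perturbation. Roughly, single-level stability for $l\geq L_0$ yields uniformly bounded Galerkin projections $P_l$ onto $X_l$; stability on \emph{full} tensor spaces follows because the Galerkin projection for $\fZ^{(k)}$ onto $X_{l_1}\otimes\cdots\otimes X_{l_k}$ factors as $P_{l_1}\otimes\cdots\otimes P_{l_k}$, whose norm is the product of the factor norms; and stability on the sparse subspace is then extracted from the telescoping representation of the sparse Galerkin projection in terms of the increments $P_l-P_{l-1}$, controlled by the multilevel approximation estimates. The threshold $L_0(k)$ thus arises from uniform boundedness of combined Galerkin projections, not from absorbing a compact remainder. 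You correctly sense that the sparse projector not being a single tensor product is an obstacle, but the more fundamental defect in your outline sits upstream, in the compactness claim for $\widetilde{\fZ}$; to repair the argument you would need to replace that claim by the projection-based mechanism just described.
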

Therefore, we deduce that the problem is well posed and we deduce the following convergence error in sparse tensor spaces:
\begin{lemma}[{\cite[Theorem 5.3]{vonPetersdorff2006}}]
Provided that $\Sigma^k \in (X^s)^{(k)}$ for any $0\leq s\leq p+1$, the following error bound holds for $L \geq L_0(k)$:
$$\|\Sigma^k - \hat{\Sigma}^k_L\|_{X^{(k)}} \leq C h^s |\log h|^{(k-1)/2} \|\Sigma^k\|_{(X^{s})^{(k)}}.
$$
\end{lemma}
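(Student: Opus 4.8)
The plan is to reduce the claim to a best-approximation estimate in the sparse space and then extract the logarithmic factor from a purely combinatorial summation over the diagonal of the index set. First I would invoke quasi-optimality: the preceding lemma (\cite[Theorem 5.2]{vonPetersdorff2006}) furnishes, for $L\geq L_0(k)$, a uniform discrete inf-sup constant $1/\hat{c}_S$ for $\fZ^{(k)}$ on $\widehat{X}_L^{(k)}$, and $\fZ^{(k)}\in\mL(X^{(k)},Y^{(k)})$ is bounded. A standard inf-sup a priori bound (C\'ea--Babu\v{s}ka) then yields
\[
\|\Sigma^k-\hat{\Sigma}^k_L\|_{X^{(k)}}\ \leq\ \big(1+\hat{c}_S\,\|\fZ^{(k)}\|_{\mL(X^{(k)},Y^{(k)})}\big)\inf_{\hat{\Theta}\in\widehat{X}_L^{(k)}}\|\Sigma^k-\hat{\Theta}\|_{X^{(k)}},
\]
so that everything reduces to controlling the best approximation of $\Sigma^k$ in the sparse tensor space.

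Next I would set up a stable multilevel splitting of $X$. Writing $h_l\sim h_0\,2^{-l}$ and introducing detail projectors $Q_l:X\to W_l$ (with $Q_0$ the projection onto $X_0=W_0$) associated to $X_l=\bigoplus_{m\leq l}W_m$, the one-dimensional interpolation estimates of \Cref{lemma:interpV,lemma:interpW} convert into a per-index Jackson bound. Tensorizing with $Q_{\underline{l}}:=Q_{l_1}\otimes\cdots\otimes Q_{l_k}$, the assumed mixed regularity $\Sigma^k\in(X^s)^{(k)}$ gives, for every multi-index $\underline{l}$,
\[
\|Q_{\underline{l}}\Sigma^k\|_{X^{(k)}}\ \lesssim\ \Big(\textstyle\prod_{i=1}^{k}h_{l_i}^{s}\Big)\,\|\Sigma^k\|_{(X^s)^{(k)}}.
\]
Assuming the splitting $\{Q_{\underline{l}}\}$ is stable in the energy norm $X^{(k)}$, i.e.\ $\|w\|_{X^{(k)}}^2\sim\sum_{\underline{l}}\|Q_{\underline{l}}w\|_{X^{(k)}}^2$ (the strengthened Cauchy--Schwarz / multilevel norm equivalence), and recalling that the sparse Galerkin space retains exactly the indices with $\|\underline{l}\|_1\leq L+(k-1)L_0$, the best-approximation error is dominated by the $\ell^2$-sum of the discarded detail norms: $\inf_{\hat{\Theta}}\|\Sigma^k-\hat{\Theta}\|_{X^{(k)}}^2\lesssim\sum_{\|\underline{l}\|_1>L+(k-1)L_0}\|Q_{\underline{l}}\Sigma^k\|_{X^{(k)}}^2$.

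Finally I would carry out the summation, using the key observation that $\prod_i h_{l_i}=h_0^{k}\,2^{-\|\underline{l}\|_1}$ depends only on $m:=\|\underline{l}\|_1$, so $\|Q_{\underline{l}}\Sigma^k\|_{X^{(k)}}^2\lesssim 2^{-2sm}\|\Sigma^k\|_{(X^s)^{(k)}}^2$ uniformly over each diagonal slab. Since $\card\{\underline{l}\in\IN_0^k:\|\underline{l}\|_1=m\}=\binom{m+k-1}{k-1}\sim m^{k-1}/(k-1)!$, with $M:=L+(k-1)L_0\sim L$ the tail becomes $\sum_{m>M}\binom{m+k-1}{k-1}2^{-2sm}$, whose exponential decay concentrates its mass near $m\approx M$ and yields $\lesssim M^{k-1}2^{-2sM}$. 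Taking square roots and recalling $h=h_L\sim 2^{-L}$, so $2^{-sM}\sim h^s$ and $M\sim|\log h|$, gives the claimed $\|\Sigma^k-\hat{\Sigma}^k_L\|_{X^{(k)}}\lesssim h^s|\log h|^{(k-1)/2}\|\Sigma^k\|_{(X^s)^{(k)}}$.

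The main obstacle is the second step, not the arithmetic of the third: one must secure the stable multilevel splitting together with the strengthened Cauchy--Schwarz inequality in the tensorized \emph{fractional} Sobolev spaces underlying $X^{(k)}$, for an operator $\fZ$ that is only inf-sup stable rather than coercive. This is precisely where the mixed-regularity hypothesis $\Sigma^k\in(X^s)^{(k)}$ enters and is the technical core of \cite[Theorem 5.3]{vonPetersdorff2006}; once it is in place, the half-power $(k-1)/2$ is a routine consequence of Pythagorean summation against the $m^{k-1}$ growth of the number of multi-indices on each slab.
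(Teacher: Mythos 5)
The paper does not prove this lemma at all---it is imported verbatim from \cite[Theorem 5.3]{vonPetersdorff2006}---so there is no in-paper argument to compare against; your reconstruction (Babu\v{s}ka quasi-optimality from the discrete inf-sup of the preceding lemma, tensorized hierarchical detail estimates under the mixed regularity $\Sigma^k\in(X^s)^{(k)}$, and the $\ell^2$ tail sum over diagonal slabs $\|\underline{l}\|_1=m$ yielding $M^{(k-1)/2}2^{-sM}$) is precisely the standard route by which the cited theorem is established. Your closing caveat is also the right one: the half-power $(k-1)/2$ genuinely hinges on the quasi-orthogonality/norm equivalence of the multilevel splitting in the fractional-order trace spaces (an $\ell^1$/triangle-inequality summation would only give $|\log h|^{k-1}$), and that ingredient is supplied by the cited reference, not by anything in this paper.
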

We solve the Galerkin system in the sparse tensor space applying the CT \cite{griebel1990combination}. It consists in solving the full systems for $\underline{l}$ specified in \cite[Theorem 13]{combi2} and for associated spaces $X^k_{\underline{l}}$ as described below.\\
\begin{problem}[Tensor Galerkin - Subblocks]\label{pb:sub}Given $k\in \IN_2$, seek $\Sigma_{\underline{l}}^k  \in X_{\underline{l}}^{(k)}$ such that
\be
\langle( \fZ_{l_1} \otimes \cdots \otimes \fZ_{l_k})\Sigma^k_{{\underline{l}}}  , \Theta^k_{{\underline{l}}} \rangle_{\Gamma^{(k)}} = \langle \fB^{(k)} \mM^k[g] , \Theta_{{\underline{l}}}^k\rangle_{\Gamma^{(k)}}, ~ \forall \Theta^k_{{\underline{l}}} \in X_{\underline{l}}.
\ee
\end{problem}
Thus, following \cite[Lemma 12 and Theorem 13]{combi2}, the Galerkin orthogonality allows to rearrange the solution in the sparse tensor space as
\be
\label{eq:lvl_CT}
\hat{\Sigma}_{L}^k(L_0) = \sum_{i=0}^{k-1} (-1)^i \dbinom{k-1}{i}\sum_{\|\underline{l}\|_1 = L + (k-1)L_0 - i}\Sigma^k_{\underline{l}}.
\ee
The total number of degrees of freedom (dofs) is of order $dofs = \mO(N_L \log^{k-1} N_L)$. 

Finally, we plug the unknowns $\xi_L,\hat{\Sigma}^k_L$ into the volume reconstruction formulas presented in \Cref{subs:tensor_bie} and obtain the couple 
\be
\U_L(\bx) \textup{, and } \widehat{\mM^k[\U']}_L(\underline{\bx}) \textup{, for } \bx\in \mD, \underline{\bx} \in \mD^{(k)},
\ee
being the final approximate delivered by the method. 

\begin{remark}[Affine meshes]Meshing by planar surface elements induces a geometrical error, which typically limits the order of convergence of Galerkin BEM to $\mO(h^2)$. Following \cite[Chapter 8]{sauter}, we present in \Cref{tab:ConvergenceRates} the conjectured convergence rates for $\IP^1$ discretization with affine meshes for the mean field and two-point covariance for both Neumann and Dirichlet trace counterparts for (B$_\boldbeta$).\end{remark}

\begin{figure}[t]
\begin{minipage}{0.49\linewidth}
\centering
\vspace{-0.2cm}
\begin{table}[H]
\renewcommand\arraystretch{1.3}
\begin{center}
\footnotesize
\begin{tabular}{|c|c|c|} \hline   
  &  Dirichlet traces  &  Neumann traces  \\ \hline
 Norm &  $\|\cdot\|_{H^{1/2}(\Gamma)}$  & $\|\cdot\|_{H^{-1/2}(\Gamma)}$  \\ \hline\hline
 $\xi_L$ &  $h^{3/2}$ & $h^{2}$  \\ \hline
 $\Sigma^k_L$ & $h^{3/2}$ & $h^{2}$ \\ \hline 
 $\hat{\Sigma}^k_L$ & $h^{3/2}|\log h|^{(k-1)/2}$ & $h^2|\log h|^{(k-1)/2}$  \\ \hline 
\end{tabular}
\vspace{-0.15cm}
\caption{}
\label{tab:ConvergenceRates}
\end{center} 
\end{table}  
\end{minipage}
\begin{minipage}{0.49\linewidth}
\vspace{-0.15cm}
\begin{table}[H]
\renewcommand\arraystretch{1.3}
\begin{center}
\footnotesize
\resizebox{5cm}{!} {
\begin{tabular}{|c|c|c|} \hline   
 FOA & First-Order Approximation\\ \hline 
SD & Shape Derivative\\ \hline
FOSB & First-Order Sparse Boundary\\ \hline
BIE & Boundary Integral Equation \\ \hline
MC & Monte-Carlo \\ \hline
CT & Combination Technique \\ \hline
\end{tabular}}
\end{center} 
\vspace{-0.2cm}
\caption{}
\label{tab:Acronyms}
\end{table}  
\end{minipage}
\vspace{-0.6cm}
\caption*{\Cref{tab:ConvergenceRates} (left): Expected convergence rates for the quantities of interest for $k\in\IN_2$ with $\IP^1$ discretization and affine meshes. \Cref{tab:Acronyms} (right): Non-exhaustive list of acronyms.}
\end{figure}
\section{Implementation considerations}
\label{sec:further}
In what follows, we aim at understanding several technical aspects related to the implementation of the FOSB scheme. 
\subsection{Symmetric covariance kernels}
\label{subs:cov}
Consider the case $k=2$ for a solution $\Sigma^2 \equiv \Sigma$. In most applications, the right-hand side is a  symmetric pseudo-covariance kernel, which entails a symmetric solution $\Sigma(\bx_1,\bx_2)=\Sigma(\bx_2,\bx_1)$. Therefore, the sparse tensor approximation or the CT allow for a two-fold reduction of the dofs for a given accuracy, since for any $l_1,l_2 \in \IN_0$, the matrix representation of unknowns reads $\bSigma_{l_1,l_2}=\bSigma_{l_2,l_1}^T$, its transpose. We express the latter in \Cref{tab:CombSym}, for $(L_0,L)=(0,5)$ and $(2,5)$ and for the test case that we detail further in \Cref{subsec:TP}, and giving $N_L^2=595,984$ in the full tensor space $V_L^{(2)}$, evidencing the efficiency of the CT and the benefits due to symmetry of the solution.  Indeed, for $L,L_0 \in \IN_0$, $L_0 \geq L$, the CT yields:
\be
\hat{\Sigma}_{L}(L_0) = \sum_{l_1+l_2 = L + L_0}\Sigma_{l_1,l_2}-\sum_{l_1+l_2 = L + L_0-1}\Sigma_{l_1,l_2},
\ee
while its symmetric counterpart uses a reduced number of subblock indices:
\begin{align*}
\hat{\Sigma}_{L}(L_0)& = \sum_{\substack{l_1+l_2 = L + L_0 \\ l_2<l_1}}(\Sigma_{l_1,l_2}+\Sigma_{l_2,l_1})-\sum_{\substack{l_1+l_2 = L + L_0-1\\ l_2 < l_1}}(\Sigma_{l_1,l_2}+\Sigma_{l_2,l_1})\\
&+ \Sigma_{(L+L_0)/2} -\Sigma_{(L+L_0)/2-1}\textup{, if } L+L_0 \textup{ is odd}.
\end{align*}
The remark applies identically to complex Hermitian covariance matrices, as $\bSigma_{l_1,l_2}=\overline{\bSigma_{l_2,l_1}}^T=\bSigma_{l_2,l_1}^H$ (see \Cref{remark:complex_moments}) and can be directly generalized for higher moments.

\begin{table}[t]
\renewcommand\arraystretch{1.4}
\begin{center}
\footnotesize
\begin{tabular}{
    |>{\centering\arraybackslash}m{2.8cm}
    >{\centering\arraybackslash}m{2.8cm}
    |>{\centering\arraybackslash}m{2.8cm}
    >{\centering\arraybackslash}m{2.8cm}|
    }\hline
  \multicolumn{2}{|c|}{$L=5$ and $L_0=0$} & \multicolumn{2}{c|}{$L=5$ and $L_0=2$}\\\hline
  \multicolumn{2}{|c|}{\includegraphics[width=0.4\textwidth]{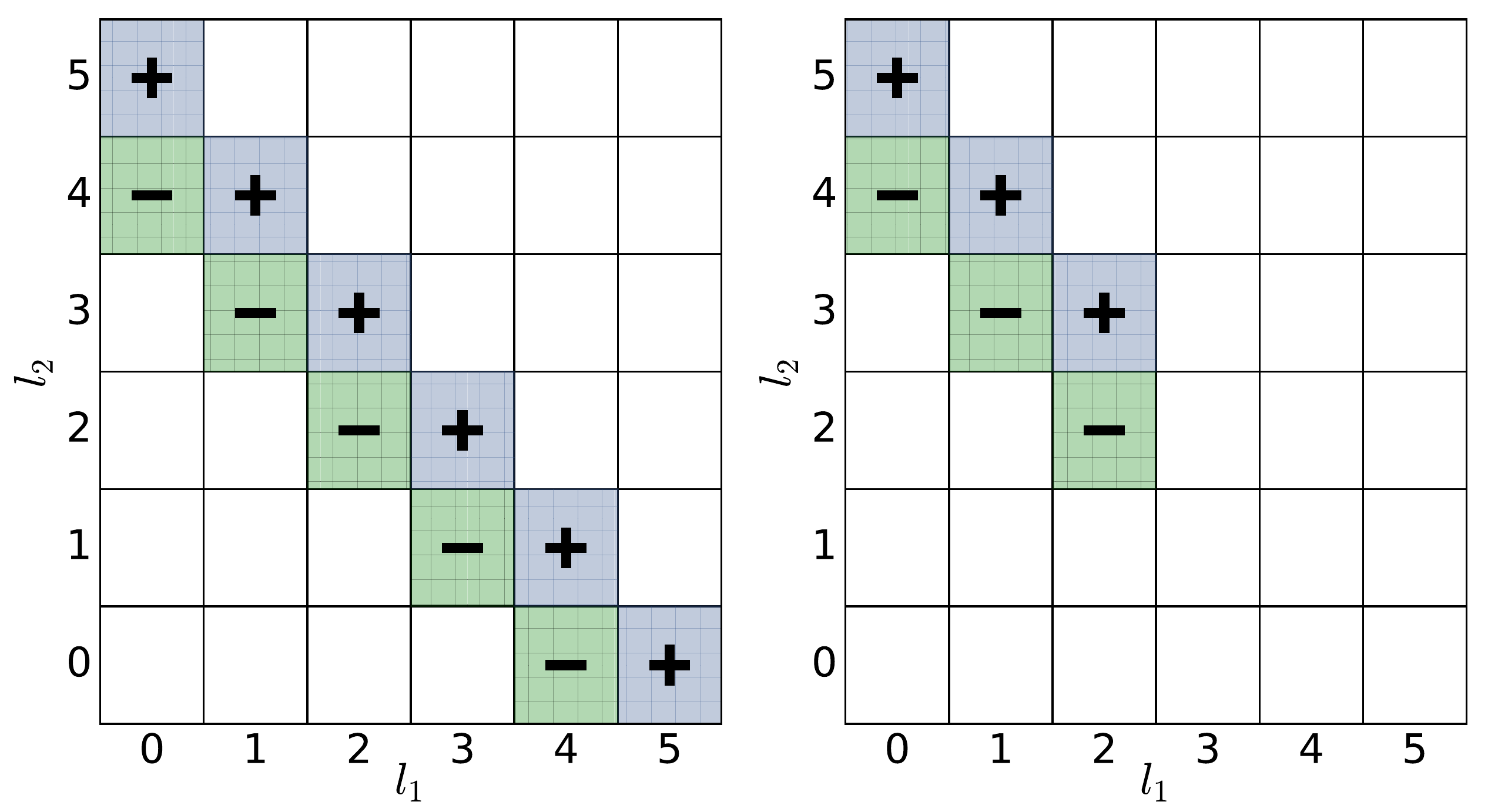}} & \multicolumn{2}{c|}{\includegraphics[width=0.4\textwidth]{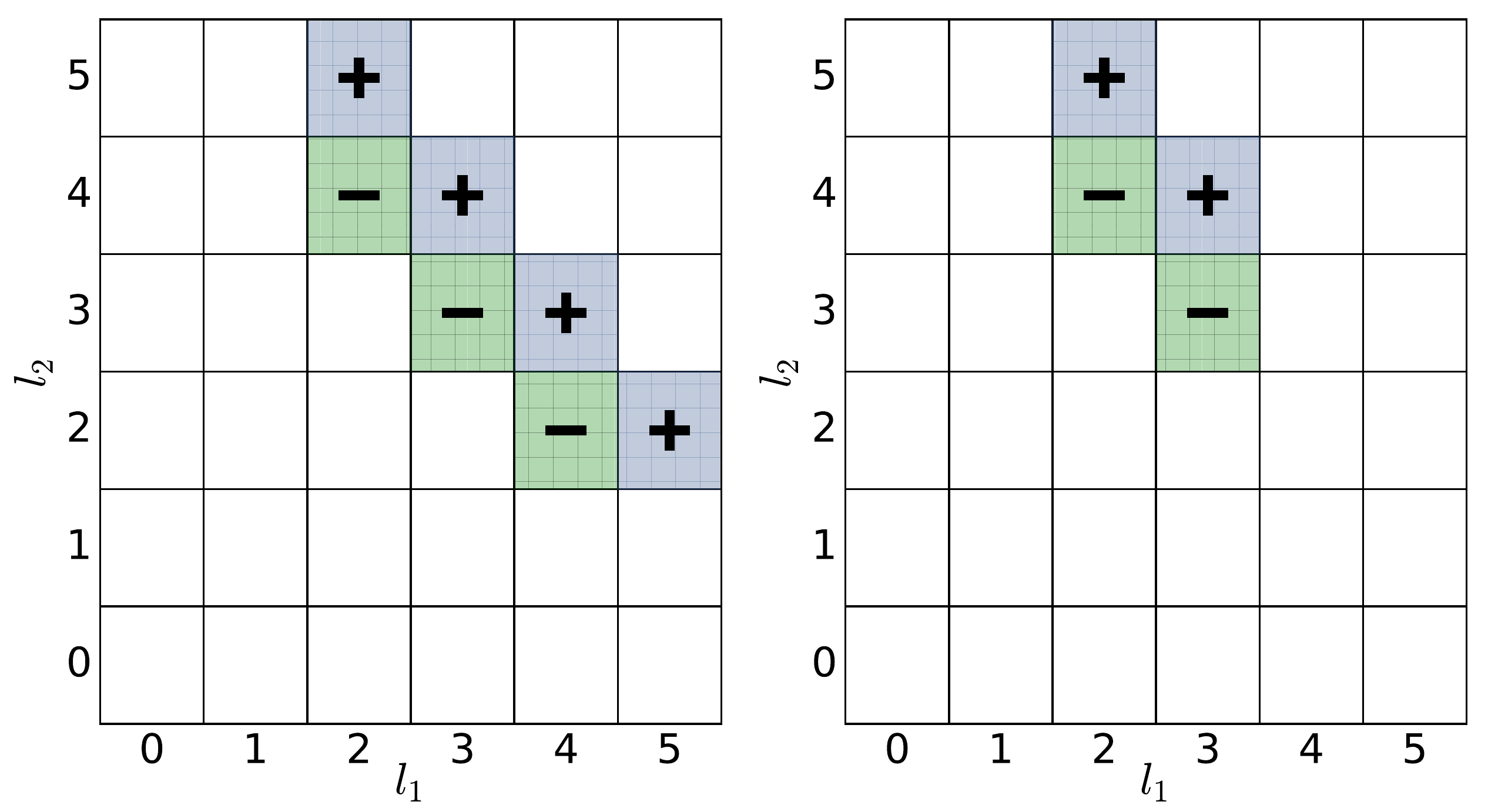}}\\ \hline
  ~~~~~~~~~$165,740$ & $87,672$& ~~~~~~~~~$413,980$&$225,808$\\\hline
 \end{tabular}
\caption{Subspaces used for the classical CT (left of each cell) and the symmetric CT (right of each cell) for $k=2$. In last row, we detail dofs of each scheme. Notice that $N_L^2=595,984$.}
\label{tab:CombSym}
\end{center} 
\vspace{-.2cm}
\end{table}  
\subsection{Preconditioning}
\label{subs:prec}
The CT allows to solve smaller subsystems by gathering the operators assembled over distinct levels --on the indices stated in \Cref{eq:lvl_CT}. It is known that the condition number of tensor operators grows with the dimension (\textit{cf.}~\cite[Section 3]{griebel2009optimized} and \cite{FJM19}). Hence, the need to precondition with an adapted framework such that the linear systems remains at least mesh independent. We opt to apply operator-based preconditioners such as Calder\'on preconditioning \cite{hiptmair2006operator,IEEE_EJH} and assume for $\beta=0,1,2$ that $\kappa^2 \notin \{S_\textup{Dir},S_\textup{Neum}\}$. On each level, we apply the preconditioner $\fC$ proposed in \Cref{tab:NotationsBIEs}. We propose the following result for the induced linear system in \Cref{lemma:mesh_ind}.
\begin{lemma}[Mesh independence result]
\label{lemma:mesh_ind}
For $k \in \IN_1$ and for each ${\underline{l}} =(l_k)_k $ with $l_k \in \IN_0$, $l_k \geq L_0(k)$ such as defined in \Cref{sec:galerkin_CT}, the discretized Galerkin system issued from operator $(\fC \fZ)^{(k)}$ has a spectral condition number $\kappa_2$ independent of the mesh size, i.e.~remains bounded as $\|{\underline{l}}\|_1 \to \infty$. 
\end{lemma}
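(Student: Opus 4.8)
The plan is to reduce the $k$-fold statement to the single-level ($k=1$) operator-preconditioning result and then exploit the multiplicativity of the spectral condition number under Kronecker products.

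First I would establish the single-level estimate: for every level $l \geq L_0$ the Galerkin matrix $\bP_l$ of the preconditioned operator $\fC\fZ$ --- formed, as in the operator-preconditioning framework of \cite{hiptmair2006operator,IEEE_EJH}, from the Galerkin matrices of $\fZ$ and $\fC$ joined through the mixed mass matrix between the trace spaces --- satisfies $\kappa_2(\bP_l) \leq C_\beta$ with $C_\beta$ independent of $l$, hence of $h_l$. This rests on three facts visible from \Cref{tab:NotationsBIEs}: (a) $\fZ : X \to Y$ and $\fC : Y \to X$ are isomorphisms, each coercive up to a compact perturbation (a G\aa rding inequality on the compact boundary $\Gamma$), with injectivity guaranteed precisely by the exclusion $\kappa^2 \notin \{S_\textup{Dir},S_\textup{Neum}\}$ of spurious eigenvalues assumed in \Cref{subs:prec}; (b) $X^s$ and $Y^s$ are dual with respect to the $L^2(\Gamma)$ pairing and $\fC\fZ : X \to X$; (c) the chosen boundary element spaces satisfy the discrete $L^2$-duality inf-sup condition uniformly in $l$. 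The threshold $L_0$ is exactly what turns the G\aa rding argument into a uniform discrete inf-sup constant for all $l \geq L_0$. For $\beta=3$ the preconditioner coincides with the operator, and one instead invokes the Calder\'on identity $(\fP^i_\kappa)^2 = \fI$, which renders $\fC\fZ$ a compact perturbation of a multiple of the identity.

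Next I would assemble the tensor system. Writing $\bC_l$ and $\mathbf{Z}_l$ for the level-$l$ Galerkin matrices, the per-level preconditioner, operator, and mixed mass matrices all tensorize across levels; hence, by the mixed-product property of Kronecker products, the $k$-fold preconditioned operator on the multi-index $\underline{l}$ factorizes as
\[
(\fC\fZ)^{(k)}_{\underline{l}}
 = \bigl(\bC_{l_1}\otimes\cdots\otimes\bC_{l_k}\bigr)\bigl(\mathbf{Z}_{l_1}\otimes\cdots\otimes\mathbf{Z}_{l_k}\bigr)
 = \bP_{l_1}\otimes\cdots\otimes\bP_{l_k}, \qquad \bP_{l_j}:=\bC_{l_j}\mathbf{Z}_{l_j}.
\]
Since the singular values of a Kronecker product are the products of the singular values of its factors, the spectral condition number is multiplicative:
\[
\kappa_2\bigl(\bP_{l_1}\otimes\cdots\otimes\bP_{l_k}\bigr) = \prod_{j=1}^{k}\kappa_2(\bP_{l_j}).
\]
Combining with the single-level bound $\kappa_2(\bP_{l_j}) \leq C_\beta$ for each $l_j \geq L_0(k)$ yields $\kappa_2\bigl((\fC\fZ)^{(k)}_{\underline{l}}\bigr) \leq C_\beta^{\,k}$, a bound depending only on the fixed order $k$ and not on $\underline{l}$; in particular it remains bounded as $\|\underline{l}\|_1 \to \infty$, which is the claim.

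I expect the main obstacle to lie entirely in the single-level step, namely in the indefinite, non-coercive nature of the Helmholtz operators: coercivity is available only modulo a compact operator, so uniform well-conditioning holds merely asymptotically, which is why the resolution threshold $L_0$ and the hypothesis $l_k \geq L_0(k)$ are indispensable. The delicate technical point there is the uniform discrete $L^2$-duality stability between the Dirichlet- and Neumann-trace element spaces (requiring compatible, e.g.\ barycentrically refined or dual, meshes), as it is this stability --- and not the continuous mapping properties --- that pins down the mesh-independence constant. Once this ingredient is secured, the tensorization in the remaining steps is purely algebraic and robust in $k$.
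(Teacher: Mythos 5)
Your proposal is correct and follows essentially the same route as the paper: the paper's proof simply invokes the $k=1$ operator-preconditioning result of \cite{hiptmair2006operator} and then the multiplicativity of the condition number under tensor products \cite{FJM19}. You have merely expanded the single-level step (G\aa rding inequality, duality pairing, uniform discrete inf-sup) and the Kronecker-product singular-value argument that the paper leaves to its references.
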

\begin{proof}
The result is proved for $k=1$ in \cite{hiptmair2006operator} and applies straightforwardly to $k\geq 2$ as the condition number of tensor operators is multiplicative \cite{FJM19}.
\end{proof}
This last result shows mesh independence of the numerical scheme, i.e. it guarantees the $h$-stable (linear) convergence of GMRES (refer to \cite[Section 4]{galkowski2016wavenumber}). Also, as the domains have a Lyapunov boundary, $\fK_\kappa$ is compact in both $H^{1/2}(\Gamma)$ and $L^2(\Gamma)$ (see the discussion after Theorem 2.49 in \cite{chandler2008wave}). Consequently, the induced operators are second-kind Fredholm operators of the form $\fI + \fK :X\to X $ \cite{antoine2016integral} with $X$ a separable Hilbert space, which entails fast asymptotic convergence (i.e.~super-linear) of iterative solvers \cite{van1993superlinear}. Additionally, $L^2$-compactness is advantageous as it is naturally suited to the euclidean norm-based GMRES \cite{campbell1996convergence}. Still,  second-kind Fredholmness is not transferred to $(\fI + \fK )^{(k)}$, as the cross-terms are not compact, e.g., for $k=2$, $\fI \otimes \fK$ and $\fK \otimes \fI$ are not compact at a continuous level. 
\begin{remark}[Non-compactness of cross-terms]As stated in \cite[Corollary 1]{zanni2015note} for $X$ a Hilbert complex space and $\fA,\fB\in \mL(X)$:
\be
\fB \otimes \fA \textup{ is nonzero and compact } \iff \fA \textup{ and } \fB \textup{ are both nonzero and compact.}
\ee
Suppose that $\fK$ is nonzero and $\fI \otimes \fK$. Therefore, $\fI$ is compact, which is a contradiction, proving that $\fI \otimes \fK$ is not compact. Similarly, we ensure that $\fK \otimes \fK$ is compact.
\end{remark}
Despite the above, super-linear convergence for the higher moments is likely: clustering properties of $\fA$ are surprisingly transferred to the tensor operator as hinted by the next result.
\begin{theorem}[Clustering properties of the tensor matrix equation]
\label{theorem:cluster}
Consider that $\fA = \fI + \fK:X\to X$ with $X$ a separable Hilbert space and $\fK$ a compact operator. Therefore, for $k\in \IN_2$, the discretized system of $\fA^{(k)}$ has a cluster at $1$.
\end{theorem}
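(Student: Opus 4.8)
The plan is to reduce the claim to a finite–dimensional eigenvalue count and to exploit the multiplicative structure of Kronecker products. Let $\fA_L$ denote the (preconditioned) Galerkin realization of $\fA=\fI+\fK$ on $X_L$. First I would note that, on the tensor basis of $X_L^{(k)}$, the discretization of $\fA^{(k)}$ is exactly the $k$-fold Kronecker power of the single–level matrix: since Kronecker products respect products and inverses, $(\mathbf{M}^{-1}\mathbf{A})^{\otimes k}=(\mathbf{M}^{\otimes k})^{-1}\mathbf{A}^{\otimes k}$, so the generalized eigenproblem tensorizes cleanly. The key elementary fact is then that the eigenvalues of a Kronecker power are the products of the factor eigenvalues: if $\{\lambda_1,\dots,\lambda_{N_L}\}$ are the eigenvalues of $\fA_L$ (with multiplicity), the eigenvalues of $\fA_L^{(k)}$ are $\big\{\prod_{j=1}^k\lambda_{i_j}:1\le i_1,\dots,i_k\le N_L\big\}$. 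This follows by putting $\fA_L$ in Schur form and reading off the diagonal of the resulting triangular Kronecker power.

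The second ingredient is the clustering at $1$ of the single–level operator. Standard approximation theory for second–kind Fredholm operators (identity plus compact) applies: because $\fK$ is compact and the Galerkin projections converge strongly to the identity, the approximations converge in norm, whence the spectrum of $\fA_L=\fI+\fK_L$ inherits the accumulation of $\sigma(\fI+\fK)$ at $1$. Concretely, for each $\delta>0$ there is a constant $m_\delta$, independent of $L$, such that at most $m_\delta$ eigenvalues $\lambda_i$ of $\fA_L$ satisfy $|\lambda_i-1|>\delta$; writing $\lambda_i=1+\mu_i$, all but at most $m_\delta$ of the $\mu_i$ obey $|\mu_i|\le\delta$.

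I would then carry out the counting. Fix $\epsilon>0$ and pick $\delta$ with $(1+\delta)^k-1\le\epsilon$. For any tuple $(i_1,\dots,i_k)$ all of whose indices are ``small'' ($|\mu_{i_j}|\le\delta$), the product inequality gives $\big|\prod_{j=1}^k(1+\mu_{i_j})-1\big|\le\prod_{j=1}^k(1+|\mu_{i_j}|)-1\le(1+\delta)^k-1\le\epsilon$, so the associated eigenvalue of $\fA_L^{(k)}$ lies in the disc $|z-1|\le\epsilon$. The only eigenvalues that can escape are products containing at least one ``large'' index, and the number of such tuples is $N_L^k-(N_L-m_\delta)^k\le k\,m_\delta\,N_L^{k-1}$. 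Hence at most $\mO(N_L^{k-1})$ of the $N_L^k$ eigenvalues of $\fA_L^{(k)}$ fall outside $|z-1|\le\epsilon$, a vanishing fraction $\mO(N_L^{-1})$ of the total, uniformly in $L$. This is precisely the assertion that the discrete spectrum forms a (proper) cluster at $1$.

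The hard part is the second ingredient: justifying that the \emph{discrete} operators inherit the accumulation of $\sigma(\fK)$ at $0$ \emph{uniformly} in $L$, since this is where compactness of $\fK$ and norm (or collectively compact) convergence of the Galerkin scheme are indispensable, and where non-normality must be watched — the argument bounds eigenvalues, so transferring it to GMRES behaviour requires either near-normality of the preconditioned $\fA_L$ or an analogous pseudospectral statement. By contrast, the Kronecker eigenvalue identity and the counting are purely algebraic and robust. I would finally remark that the bound $\mO(N_L^{k-1})$ shows the cluster is proper but \emph{not} strong (the outlier count grows with $N_L$), which is consistent with the non-compactness of the cross-terms noted above.
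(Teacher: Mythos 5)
Your argument is correct, but it takes a genuinely different route from the paper's. The paper works \emph{additively}: it expands $\fA^{(k)}=(\fI+\fK)^{(k)}$ into the identity plus cross-terms of the form $\fI\otimes\cdots\otimes\fK\otimes\cdots\otimes\fI$, observes that the singular values of each such term coincide with those of $\fK$ (repeated with high multiplicity at the discrete level) and therefore accumulate at $0$, and concludes clustering at $1$ for the full tensor operator, with the transfer to the discrete level asserted via uniformity of the constants in the asymptotic bounds. You instead argue \emph{multiplicatively}: the eigenvalues of the discretized $\fA^{(k)}$ are the $k$-fold products of the eigenvalues of the single-level matrix, and a product escapes the disc $|z-1|\le\epsilon$ only if the tuple contains one of the at most $m_\delta$ single-level outliers, giving the explicit count $N_L^k-(N_L-m_\delta)^k\le k\,m_\delta\,N_L^{k-1}$. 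Your version buys three things the paper's sketch leaves implicit: it works directly with eigenvalues (the quantity the statement is about) rather than singular values of the perturbing cross-terms; it quantifies the outlier fraction as $\mO(N_L^{-1})$, showing the cluster is proper but not strong, which dovetails exactly with the paper's own remark that the cross-terms are non-compact; and it isolates the one genuinely analytic ingredient, namely that the single-level outlier count $m_\delta$ is uniform in $L$. That last step is also the unproven hinge of the paper's own proof (its final sentence), so you have not introduced any gap the paper does not already carry, and your caveat about non-normality when converting eigenvalue clustering into statements about GMRES is warranted and goes beyond what the paper says.
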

\begin{proof}
As $\fK$ is compact, its singular values $\sigma_j(\fK)$, $j=1,\cdots$ with $\sigma_j(\fK)\to 0$ as $j\to \infty$. Therefore, the singular values of $(\fI \otimes \fK)$ give $\sigma_{j,l}(\fI \otimes \fK )= \sigma_j(\fK) \to 0$. Therefore, $\fI \otimes \fK$ has a cluster at $1$. The same proof applies to any cross-term. Finally, as the constants in the asymptotic bounds are independent of the mesh side, the clustering property transfers at discrete level.
\end{proof}
To quantify a possible super-linear behavior at iteration $m \in \IN_1$, we introduce $\br_m$ the GMRES residual and the convergence factor given by the following $m$-th root:
\be
Q_m := \left(\frac{\|\br_m\|_2}{\|\br_0\|_2} \right)^{1/m}.
\ee
Notice that the super-linear behavior shows up in the final phase of convergence of Krylov solvers and ``is often not seen unless one iterates to very small relative errors and the condition number is large'' (\cite[Section 13.5]{axelsson1996iterative}).
\subsection{Wavenumber analysis}
\label{subs:wavenumber}
In this manuscript, we focus on first-kind BIEs preconditioned via Calder\'on identities. Still, the proposed technique does not give results concerning the wavenumber dependence in the constants of: (i) the FOA; (ii) the quasi-optimality constant of the sparse tensor approximation; and, (iii) the condition number. Despite being out of the scope of this manuscript, we aim at giving a few remarks about the analysis for high wavenumbers. The smoothness of the domains considered here hints at using non-resonant $L^2$-combined field formulations \cite[Section 3.9.4]{sauter}, but would require a more complex analysis to prove enough regularity for the shape derivative, namely to prove that Cauchy data $(\lambda_i,\sigma_i)$ for the SD $i=0,1$ belong to $H^1(\Gamma)$ and $L^2(\Gamma)$ respectively. Furthermore, extensive results were proved for (P$_\boldbeta$), and the analysis could be carried on, under additional restrictive requirements on the domain such as star-shapedness (refer e.g., to \cite{spence2014wavenumber,galkowski2018optimal,galkowski2016wavenumber}. Those surveys can lead to elliptic formulations, allowing for application of C\'ea's lemma \cite{cea1964approximation}, with a simple characterization of the $\kappa$-dependence of the constants of (ii) and (iii). Concerning item (i), we expect the constant to be specified with the help of the BVP for the shape Hessian, provided sufficient regularity of both domain and transformations. Furthermore, star-shapedness is a classical assumption for the UQ by random domains, as it allows to handle the domain transformations more easily.

\section{Numerical Results}
\label{sec:Numres}
We now apply the proposed technique to realistic applications. In order to investigate the accuracy of the first-order shape approximation, in \Cref{subsec:SSSH} we analyze with the shape sensitivity analysis of sound-soft and -hard problems for a kite-shaped object. Thus, the transmission problem is set over the unit sphere and focus is set on the CT for the two-point covariance field i.e.~$k=2$. The error convergence rates for the CT relying on the Mie series are analyzed in \Cref{subsec:TP}. Finally, the behavior of GMRES is discussed in \Cref{subsec:TP_Iterative} and the FOSB is compared to MC simulation for a complex case in \Cref{subsec:Non_Smooth}. Domains are excited by a plane wave polarized along the $x$-direction, i.e.~$\U^\textup{inc}(\bx) = e^{\imath \kappa x}$, with $\bx=(x,y,z)\in \IR^3$. 

All simulations are performed via the open-source Galerkin boundary element library Bempp 3.2 \cite{Bempp}.\footnote{https://bempp.com/download/} The induced linear systems are preconditioned by strong-form multiplicative Calder\'on preconditioning (\textit{cf.}~\cite{kleanthous2018calderon}). Tests are executed on a 32 core, 4 GB RAM per core, 64-bit Linux server using Python 2.7.6. Default parameters used throughout are the following: linear systems are solved with restarted GMRES(200) \cite{gmres_book} with a tolerance of $10^{-4}$. Simulations are accelerated with Hierarchical Matrices ($\mH$-mat) \cite[Chapter 2]{bebendorf2007hierarchical} combined with the Adaptive Cross Approximation algorithm (ACA) \cite[Section 3.4]{bebendorf2007hierarchical}. The relative tolerance for ACA is set to $10^{-5}$. Meshes and simulations are fully reproducible using pioneering Bempp-UQ platform, a documented Python/Bempp-based plug-in including Python Notebooks.\footnote{https://github.com/pescap/Bempp-UQ}

In our simulations, we shall represent the polar radar cross section (RCS) over the unit circle $\IS^1$ and in decibels (dB) defined by:
$$
\textup{RCS}_t(\theta) :=  10 \log_{10}\left(4 \pi \frac{ |\F_t^\textup{scat}|^2}{|\F^\textup{inc}|^2}\right) ,~\theta:=\atantwo(y,x) \in [0,2\pi].
$$
\subsection{Kite-shaped object: FOA analysis}
\label{subsec:SSSH}
First, we aim at evidencing FOA's accuracy. For this, we introduce a kite-shaped object perturbed according to $\Gamma_t := \{  \bx + t \bv,~ \bx \in \Gamma \}$, with $\bv := [(z^2-1) (\cos(\theta)-1),0.25 \sin(\theta) (1-z^2),0]$ in Cartesian axes.
In \Cref{figs:transformedBoundariesK}, we represent the family of transformed boundaries considered here, corresponding to $t=\{0.01,0.1,0.25,0.5,1.0\}$. 
 \begin{figure}[t]
 \centering
\includegraphics[width=.8\linewidth]{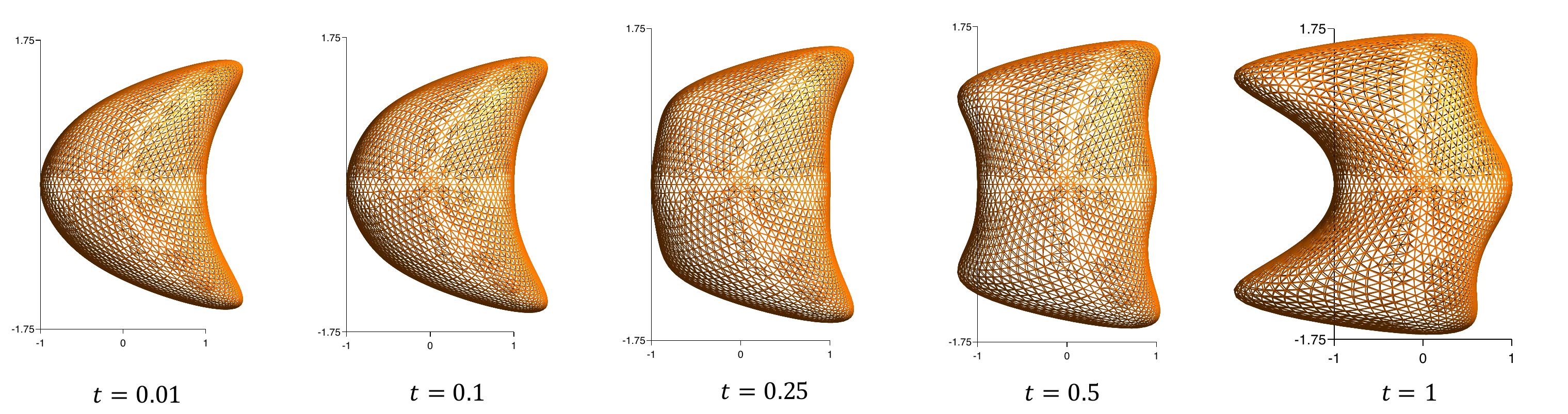}
\vspace{-.2cm}
\caption{Transformed boundaries function to $t$, meshed with $3,249$ vertices.}
\label{figs:transformedBoundariesK}
\end{figure}
For wavenumbers $\kappa=1$ and $\kappa=8$, we illuminate the object for $\beta=0,1$ and solve (P$_\beta$) for all the values of $t$. We compare the far-field and RCS of $\U_t$ (in black) to:
$$
\begin{array}{lc}
\U, & \textup{ the zeroth order approximation (ZOA, in red), and}\\
(\U+ t \U'), &\textup{ the }\textup{FOA (in blue).}\\
\end{array}
$$
Notice that the zeroth order approximation has no role in the FOA scheme, but will be used throughout as an additional reference for comparison purposes. The Galerkin discretization for $\kappa=1$ (resp.~$\kappa=8$) was realized with a precision of $30$ (resp.~$20$) triangular elements per wavelength and led to a Galerkin matrix of size $N=3249$ (resp.~$N=9820$).

\Cref{tab:ResDB} presents RCSs for $\beta = 0,1$ and $\kappa=1,8$. For different values of $t$, we plot RCS in dB of $\U_t$ on the left along with the one of the FOA $(\U + t\U')$. The $x$-axis represents the translated angle $(\theta + \pi)$ in radians. We remark that (i) as expected, the FOA gives a proper approximation for small values of $t$, (ii) the approximation seems less accurate for the shadow region, due to the oscillatory behavior of the latter and (iii) there is an evident dependence of the quality of the approximation function to the wavenumber. As an effect, we see that the FOA is accurate in a wider range of values of $t$ for $\kappa=1$ than for $\kappa =8$. 

To corroborate these remarks, we plot in \Cref{tab:ResErr} on the left-side of each cell: the error $[\cdot]_{L^2(\IS^1)}$ for $\F$ and $(\F + t \F')$ the FOA for $$t\in \{0.025, 0.05, 0.075, 0.1, 0.125, 0.25, 0.5, 0.75, 1.00\}.$$ 
These figures evidence the predicted linear and quadratic errors of both zero and first order approximations (see \Cref{remark:FF_taylor}). Besides, we observe that the FOA is indeed more accurate that $\F$ for small values of $t$. Still, the accuracy range of the FOA decreases strongly with $\kappa$. For instance, for $\beta=0,1$ and $\kappa=1$, the FOA gives a $15\%$ error for $t\leq 0.5$. Dissimilarly for $\kappa=8$, the latter remains true only for $t\leq 0.1$ for $\beta=0$ and even gives an error of $20\%$ for $\beta=1$. 

The right-side of each cell in \cref{tab:ResErr} presents the RCS pattern  of $\U_t,\U$ and $(\U+t\U')$ for $(\kappa,t)=(1,0.5)$ and $(\kappa,t)=(8,0.1)$. Let us focus on $\kappa=8$ and for $t \geq 0.25$: the FOA is clearly out of its admissible range. Next, we detail further the relative errors: in \Cref{tab:ResErrlog}, we represent $[\cdot]_{L^2(\IS^1)}$ in a log-log scale function to $t$ and verify that for $\kappa=1$, the error rate are as expected. For $\kappa=8$, the FOA presents slightly reduced convergence rates for small values of $t$ due to discretization error.
\begin{table}[t]
\renewcommand\arraystretch{1.4}
\begin{center}
\footnotesize
\begin{tabular}{
    >{\centering\arraybackslash}m{0.8cm}
    |>{\centering\arraybackslash}m{5.4cm}
    |>{\centering\arraybackslash}m{5.4cm}
    }
\vspace{0.1cm}
\includegraphics[width=1\linewidth]{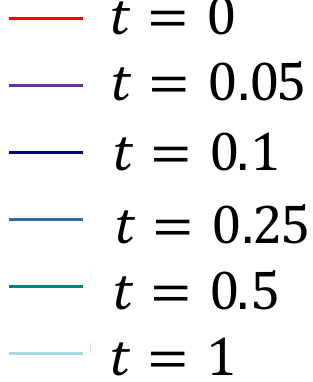}&  Sound-soft problem & Sound-hard problem \\ \hline
 $\kappa=1$&  \includegraphics[width=1\linewidth]{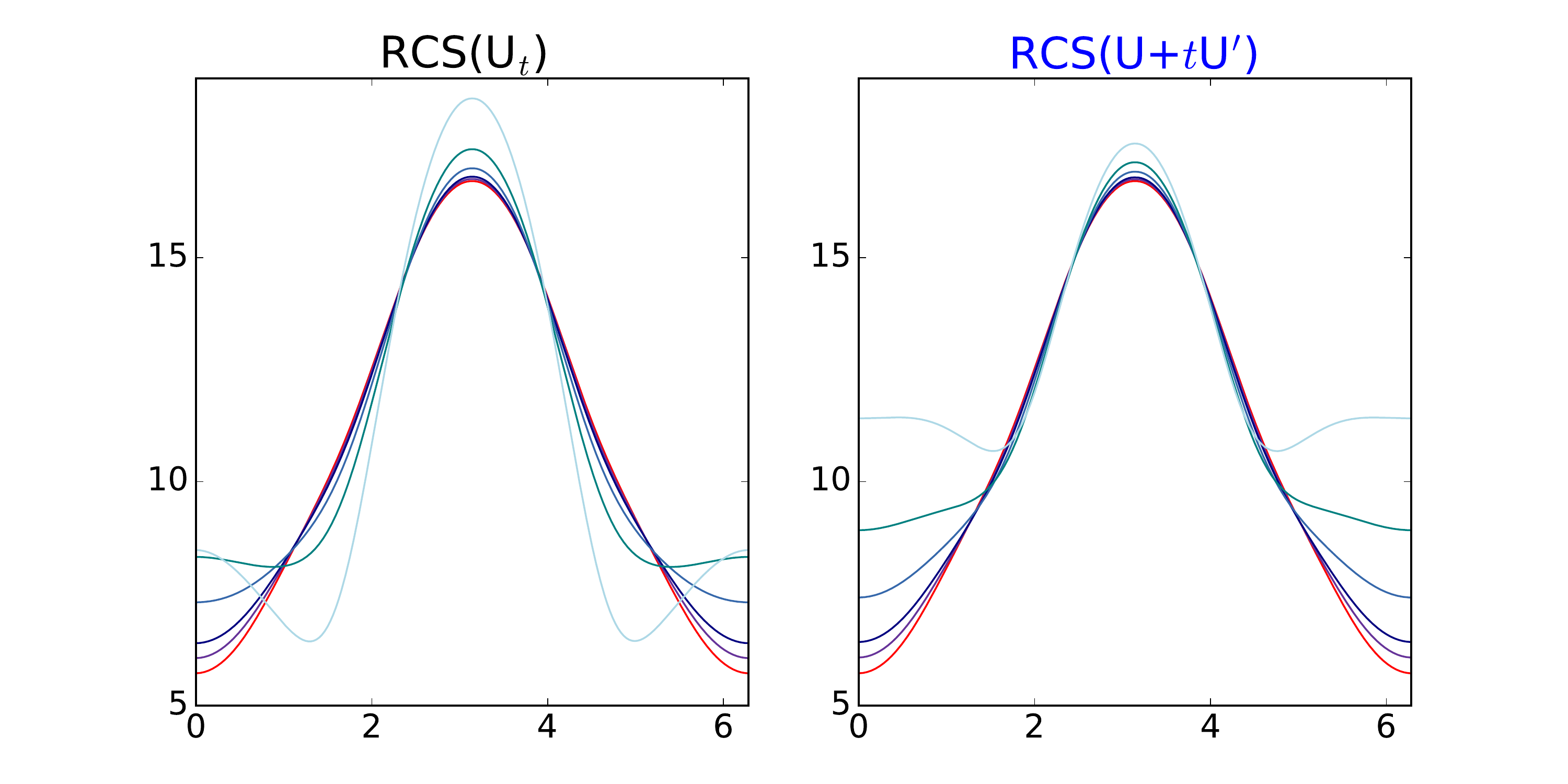} &  \includegraphics[width=1\linewidth]{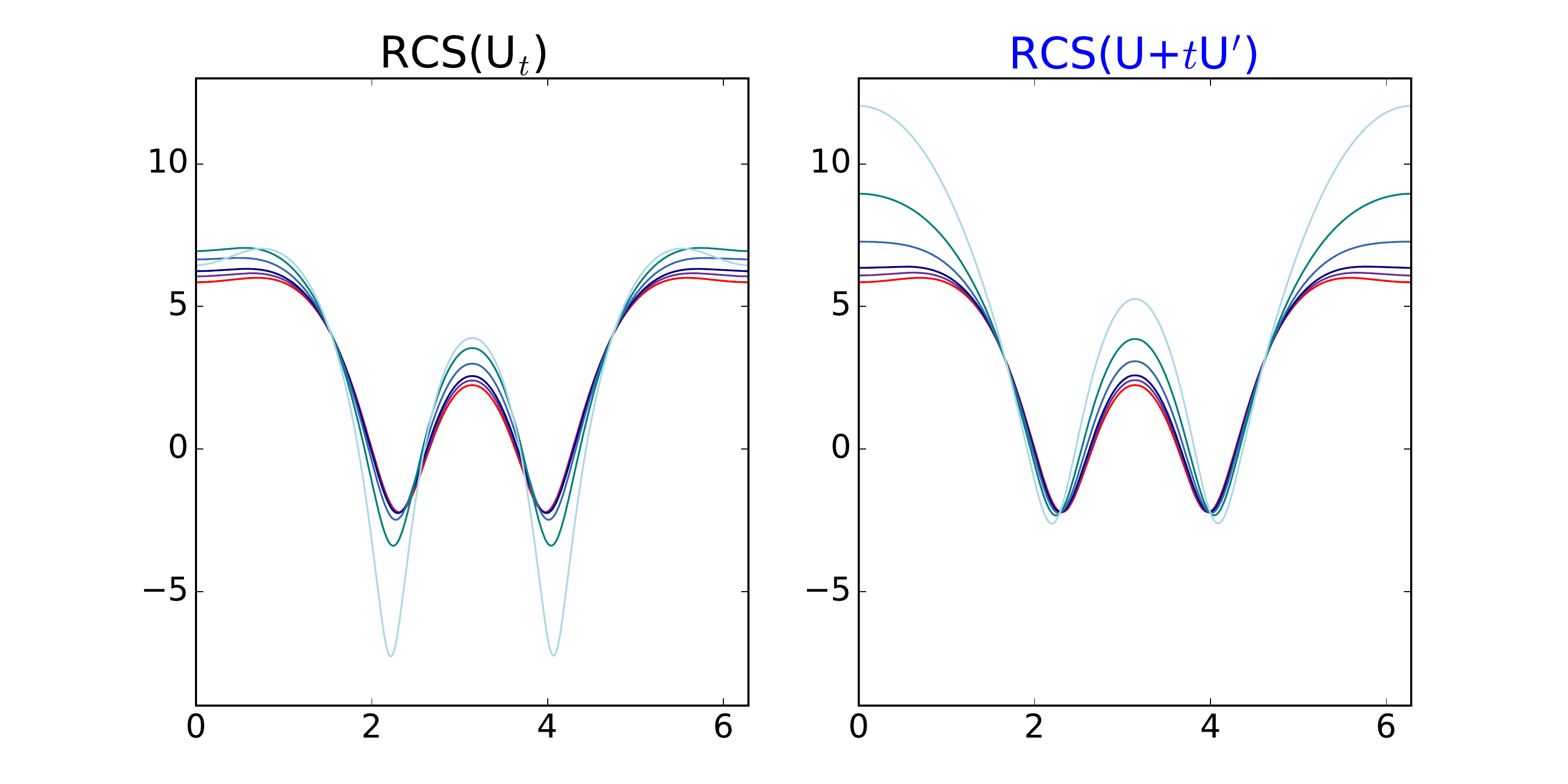}\\ \hline
 $\kappa=8$&  \includegraphics[width=1\linewidth]{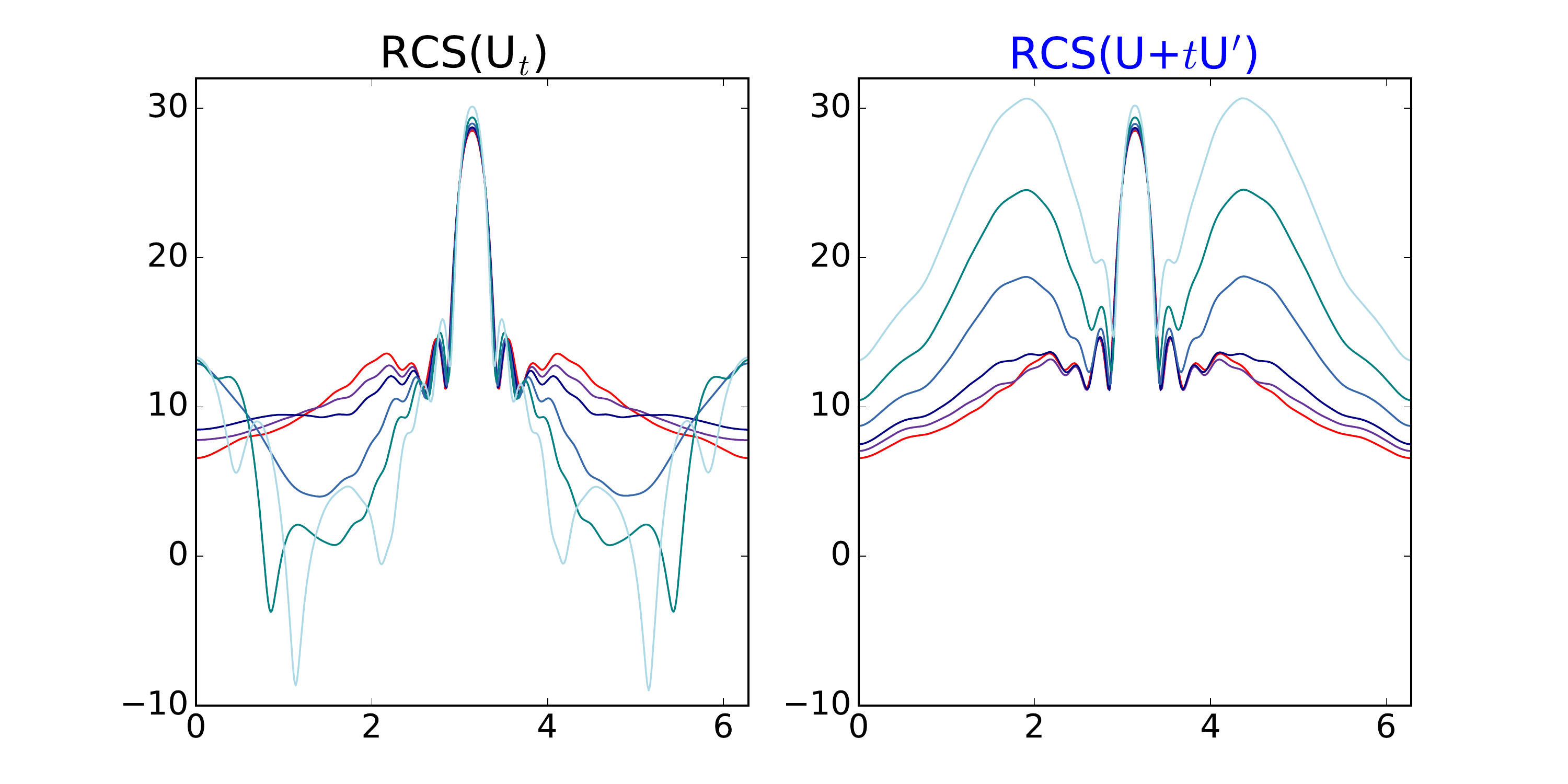} &  \includegraphics[width=1\linewidth]{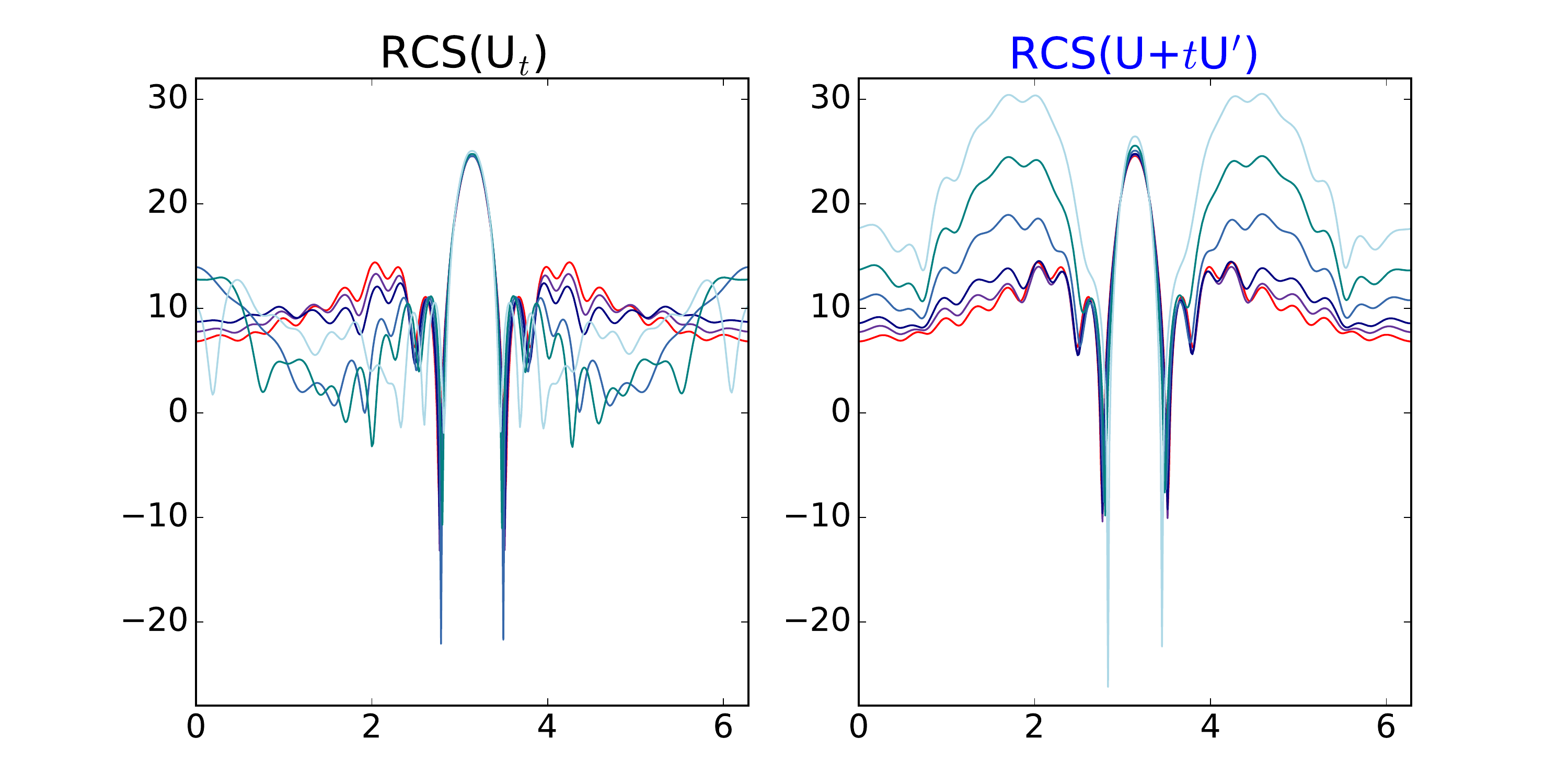}\\\hline
 \end{tabular}
\caption{RCS patterns (in dB) versus the angle $(\theta+\pi)$ in radians.}
\label{tab:ResDB}
\end{center} 
\end{table}  

\begin{table}[t]
\renewcommand\arraystretch{1.7}
\begin{center}
\footnotesize
\begin{tabular}{
    >{\centering\arraybackslash}m{0.8cm}
    |>{\centering\arraybackslash}m{5.4cm}
    |>{\centering\arraybackslash}m{5.4cm}
    }
\vspace{0.1cm}
&  Sound-soft problem & Sound-hard problem \\ \hline
 $\kappa=1$&  \includegraphics[width=1\linewidth]{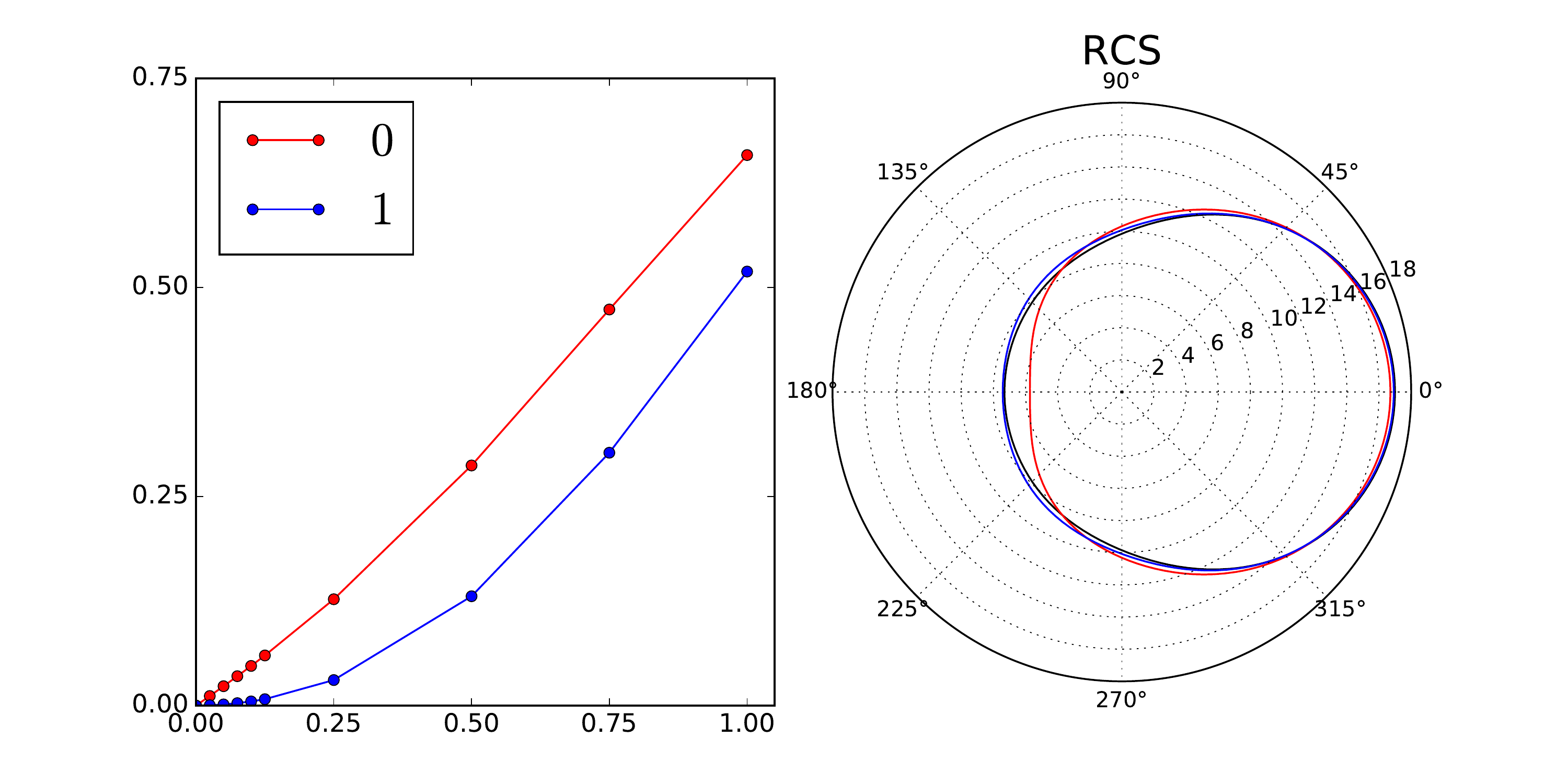} &  \includegraphics[width=1\linewidth]{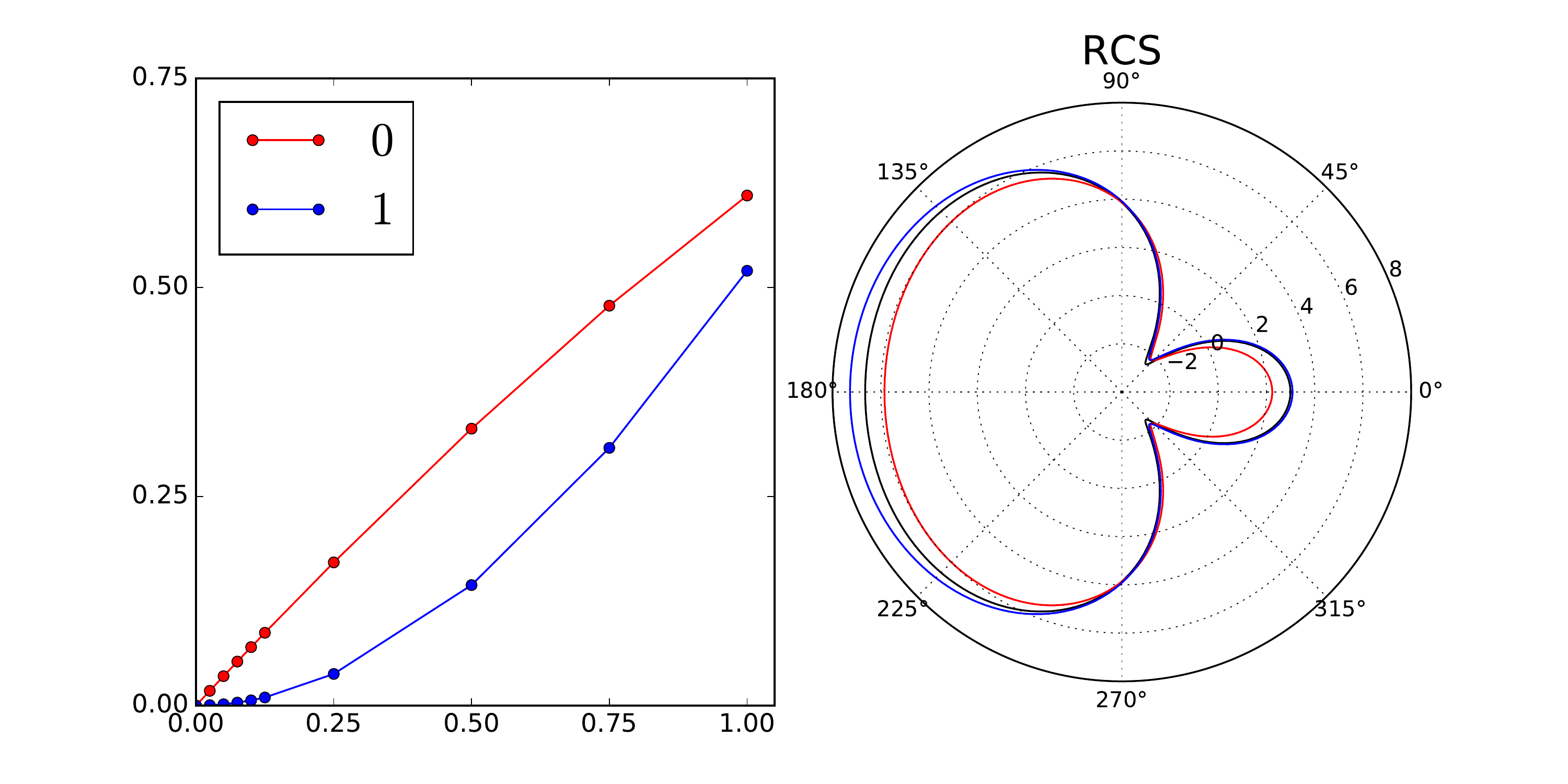}\\ \hline
 $\kappa=8$&  \includegraphics[width=1\linewidth]{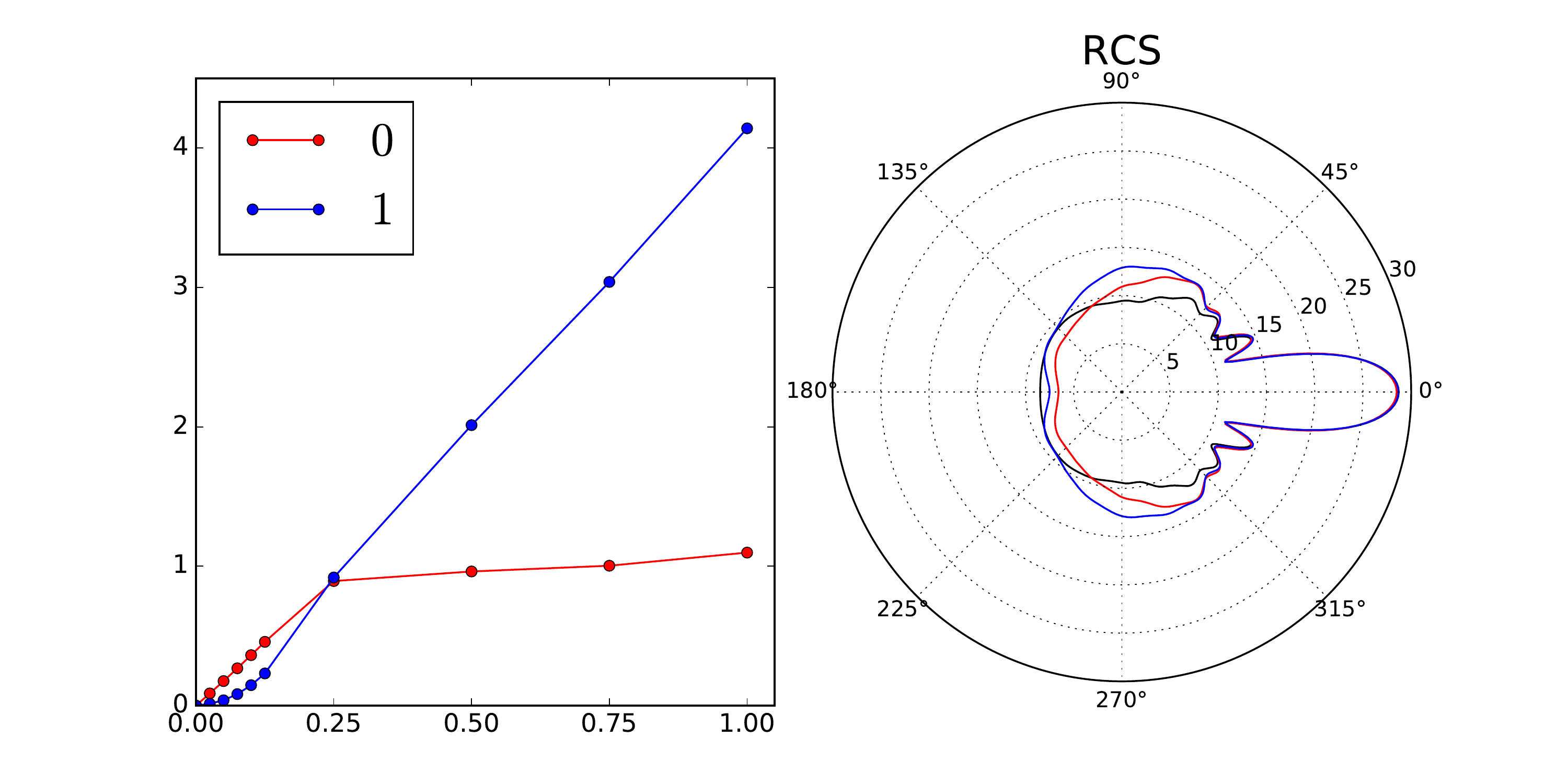} &  \includegraphics[width=1\linewidth]{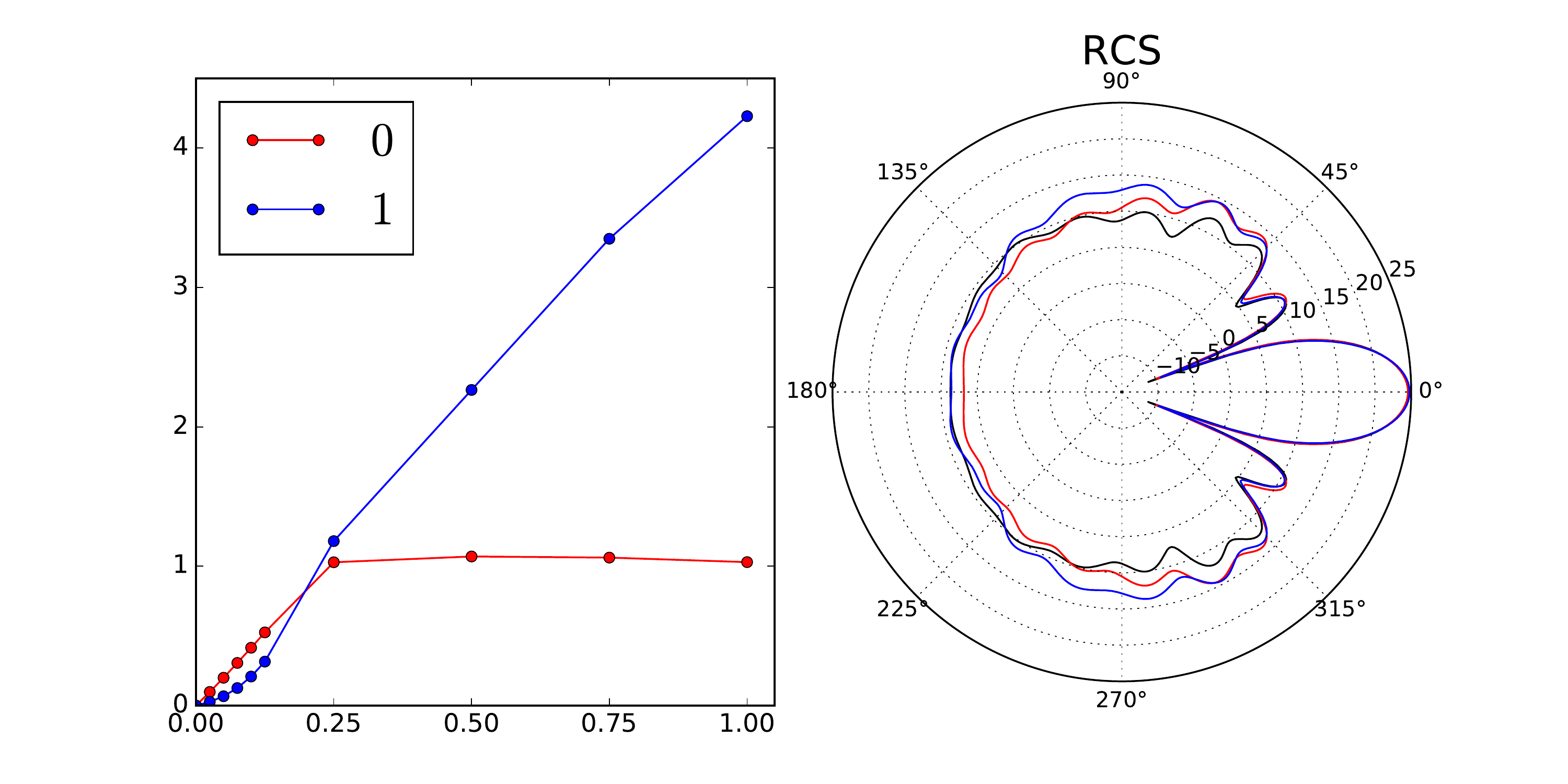}\\\hline
 \end{tabular}
\caption{ZOA (red) vs.~FOA (blue): Relative $L^2$-error on $\IS^1$ function to $t$ (left) and RCS patterns (in dB) for $(\kappa,t)=(1,0.25)$ and $(\kappa,t)=(8,0.1)$ (right).}
\label{tab:ResErr}
\end{center} 
\end{table}  

\begin{table}[t]
\renewcommand\arraystretch{1.7}
\begin{center}
\footnotesize
\resizebox{9cm}{!} {
\begin{tabular}{
    >{\centering\arraybackslash}m{0.8cm}
    |>{\centering\arraybackslash}m{5.4cm}
    |>{\centering\arraybackslash}m{5.4cm}
    }
&  Sound-soft problem & Sound-hard problem \\ \hline
 $\kappa=1$&  \includegraphics[width=.8\linewidth]{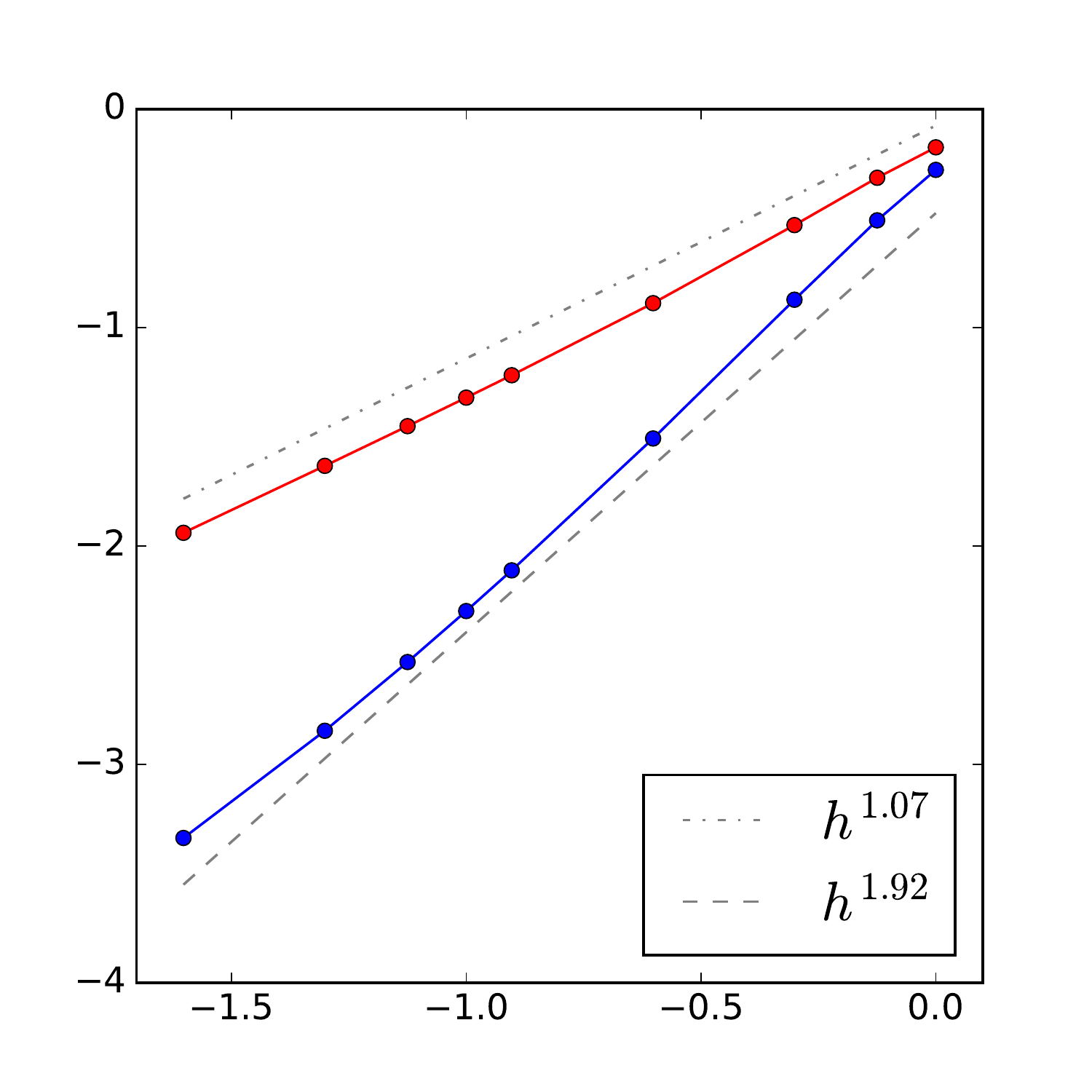} &  \includegraphics[width=.8\linewidth]{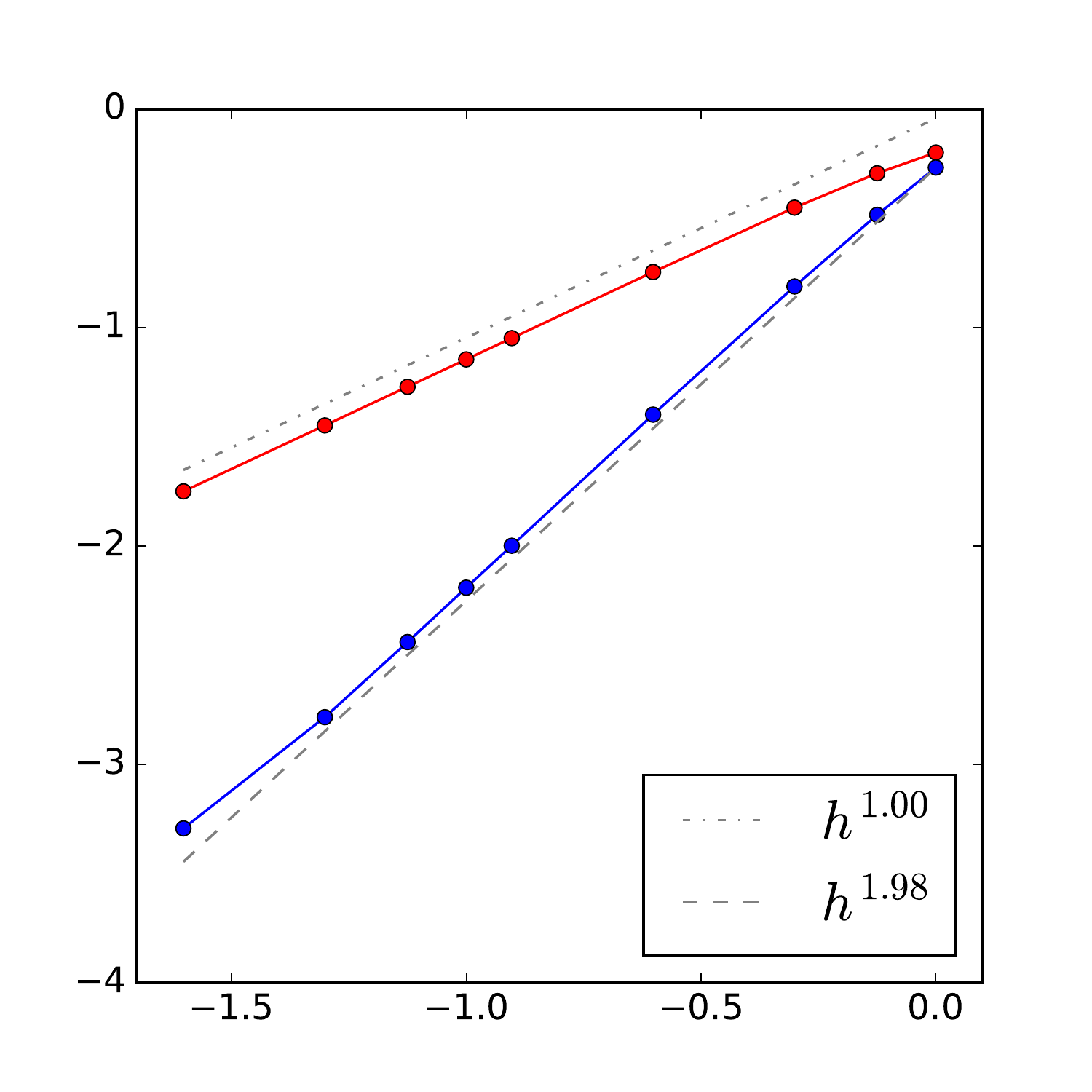}\\ \hline
 $\kappa=8$&  \includegraphics[width=.8\linewidth]{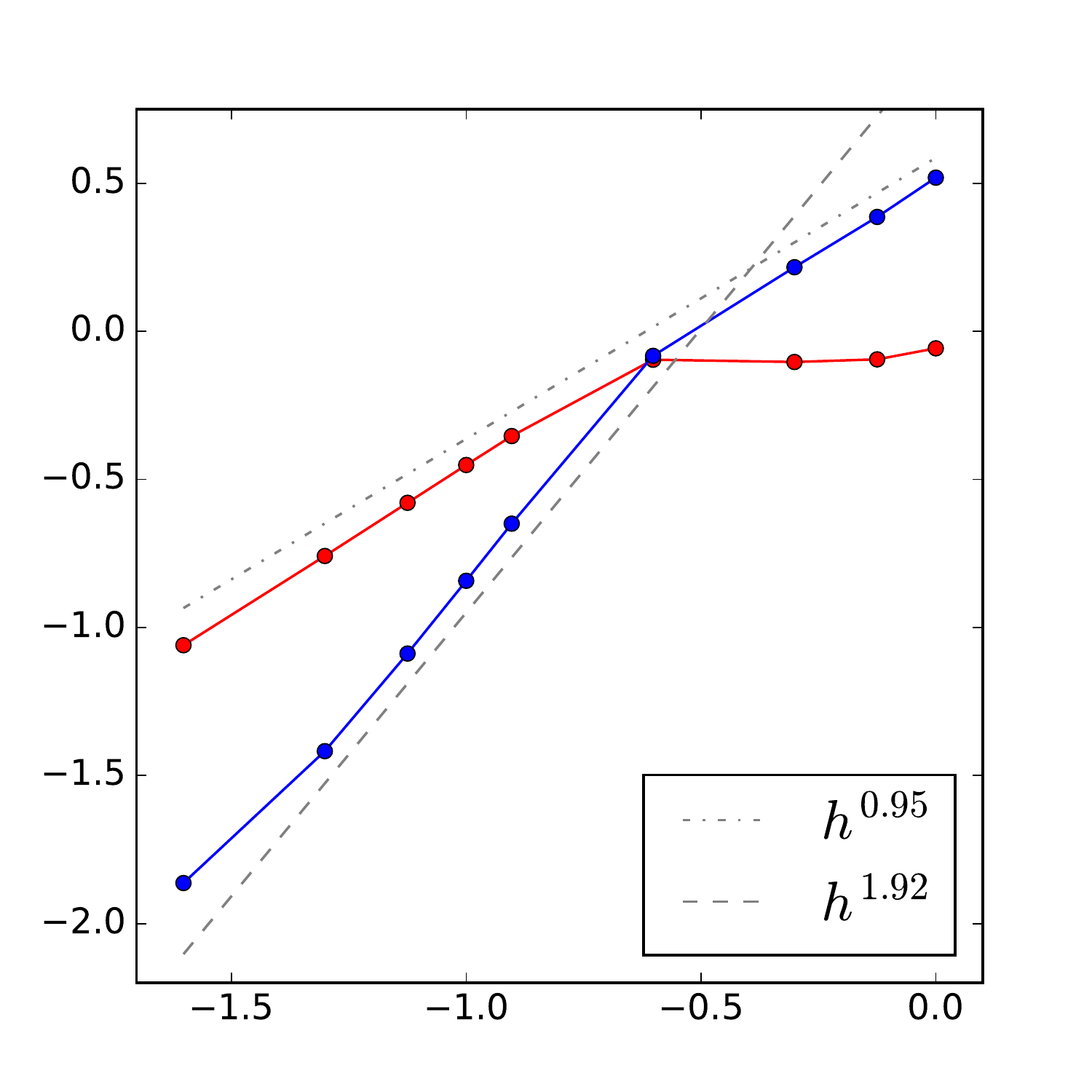} &  \includegraphics[width=.8\linewidth]{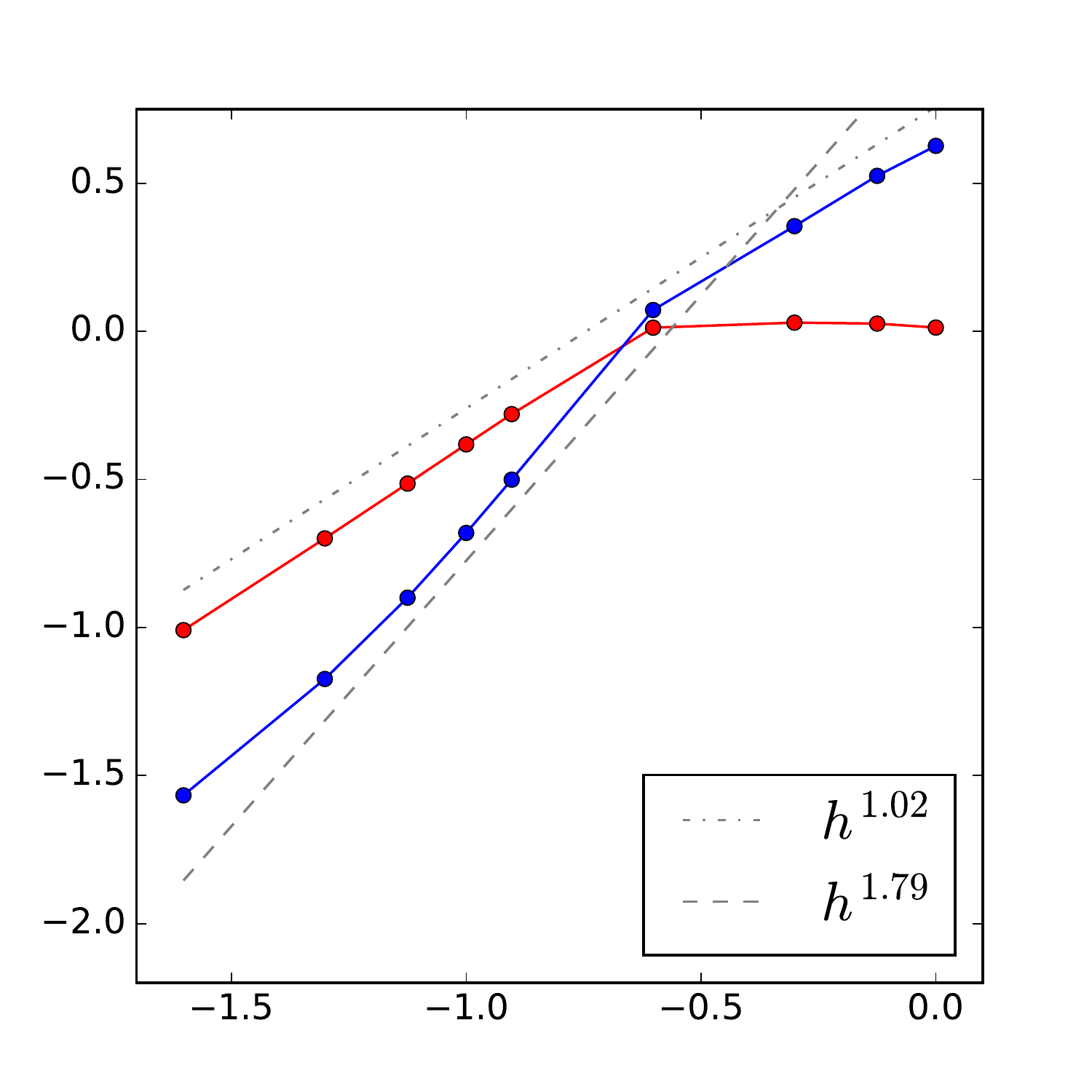}\\\hline
 \end{tabular}}
\caption{ZOA (red) vs.~FOA (blue): Relative $L^2$-error on $\IS^1$ function to $t$ in log-log scale.}
\label{tab:ResErrlog}
\end{center} 
\end{table}   
Henceforth, we aim at studying the wavenumber dependence of the approximates. We now fix $t=0.1$ and solve the problem for $\kappa \in \{1,\cdots,10\}$, with a precision of $20$ elements per wavelength for each $\kappa$. In \Cref{figs:kappa_dependence}, for $\beta=0,1$, we display the relative $L^2$-error of the approximates on $\IS^1$ function to $\kappa$. We notice a linear dependence of the error for $\U$ with respect to $\kappa$ and a dependence of order $\mO(\kappa^{3/2})$ for the FOA for $\beta=0$ and $\beta=1$, respectively. The curves show a stable asymptotic behavior function to the wavenumber. The latter hints at using $\kappa^{3/2}t=\mO(1)$ to keep an accuracy for the FOA bounded independently of the wavenumber. Notice that this estimate is more restrictive that the intuitive bound $ \kappa t=\mO(1)$ proposed in \cite{SAJ18}, confirming the need for a proper wavenumber analysis for the FOA (\Cref{subs:wavenumber}).
\begin{figure}[t]
 \centering
\includegraphics[width=0.6\linewidth]{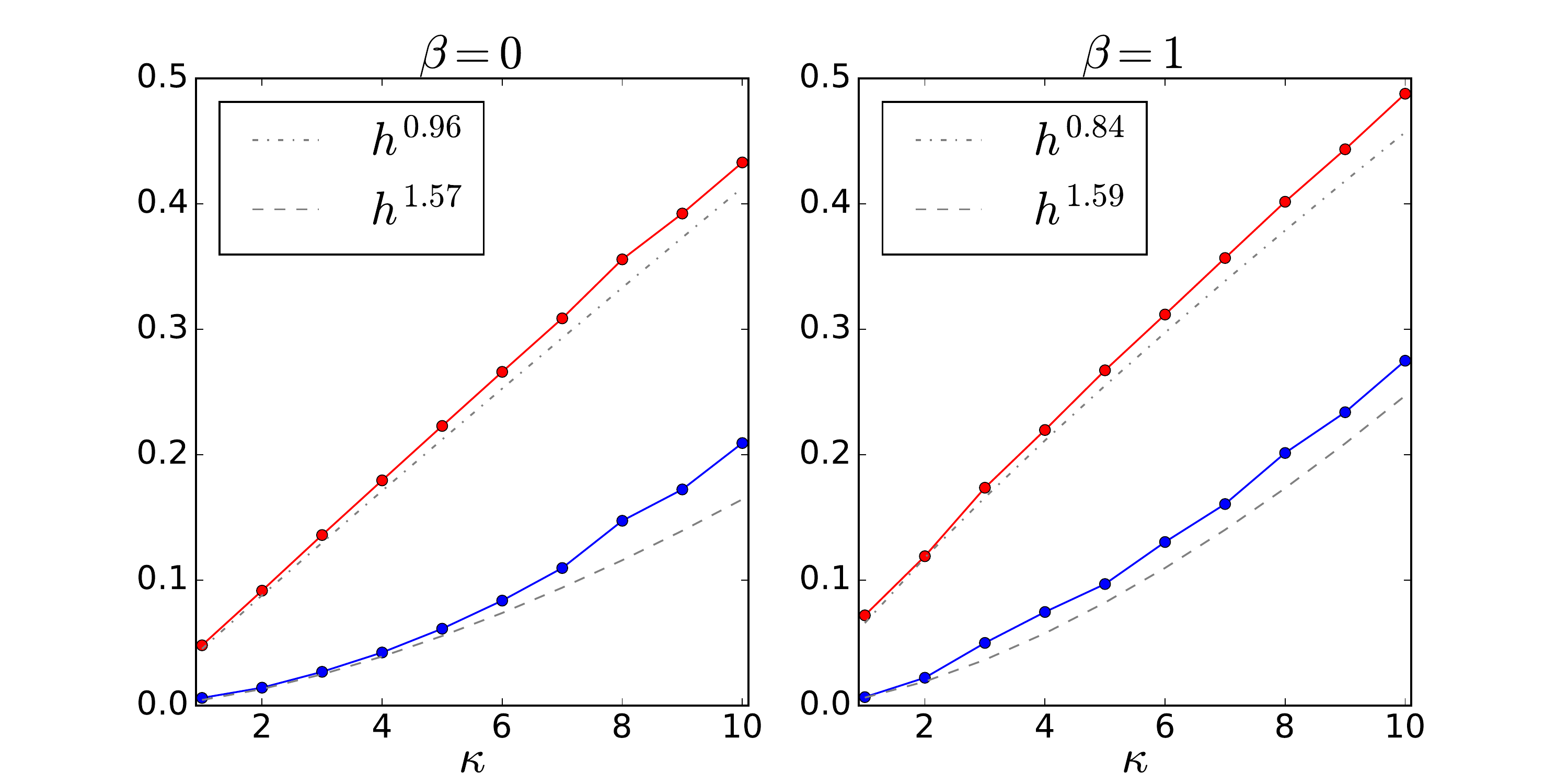}
\caption{ZOA (red) vs.~FOA (blue): Relative $L^2$-error on $\IS^1$ function to $\kappa$ for $\beta=0$ (left) and $\beta=1$ (right) and polynomial fit.}
\vspace{-.3cm}
\label{figs:kappa_dependence}
\end{figure}
\subsection{Unit sphere: convergence analysis}
\label{subsec:TP}
Consider the unit sphere $D:=\{\bx \in \IR^3: \|\bx\|_2 \leq 1\}$ and focus on the convergence rates for the mean field and second order statistical moment. In order to inspect the behavior of both Dirichlet and Neumann traces separately for the second moment, we dispose of a known solution, set $\bxi^\textup{inc}=(\gamma_0\U^\textup{inc},\gamma_1\U^\textup{inc})$, $\mu_0=\mu_1=1$, and consider the following BIEs with $\bxi := \bxi^0 = (\xi_0,\xi_1)$:
\begin{align*}
(\widehat{\opA}_{\kappa_0,\mu_0} + \widehat{\opA}_{\kappa_1,\mu_1}) \bxi &= \bxi^\textup{inc},&\textup{ on } \Gamma,\\
(\widehat{\opA}_{\kappa_0,\mu_0} + \widehat{\opA}_{\kappa_1,\mu_1})^{(2)} \Sigma &= \bxi^\textup{inc}\otimes\bxi^\textup{inc},&\textup{ on } \Gamma^{(2)}.
\end{align*}
Using the Mie series, we know exactly $\bxi$ as well as $\Sigma = \bxi \otimes \bxi$. Due to the domain regularity and the piecewise linear discretization on an affine mesh, we expect convergence rates of \Cref{tab:ConvergenceRates} to be verified. In particular here, the asymptotic error is limited by the discretization error for Dirichlet counterpart of traces, i.e.
\begin{align*}
\|\Sigma - \hat{\Sigma}_L\|_{(H^{1/2}(\Gamma) \times H^{-1/2}(\Gamma))^{(2)}}&=C|\log h|^{1/2} h^{3/2} \|\xi_0^{(2)}\|_{H^{1/2}(\Gamma)^{(2)}} + o(h^{3/2}).
\end{align*}
For the sake of conciseness, we decide to focus on the error for the Dirichlet and Neumann counterparts of the solution and not on cross terms in $H^{1/2}(\Gamma)\otimes H^{-1/2}(\Gamma)$ and $H^{-1/2}(\Gamma)\otimes H^{1/2}(\Gamma)$ as they provide similar results to the problems (P$_\beta$), $\beta=0,1,2$.

We set $\mu_0=\mu_1=1$, and solve the transmission problem for two couples of wavenumbers, referred to as the low frequency (LF): $(\kappa_0,\kappa_1)=(0.4,1)$ and high frequency (HF): $(\kappa_0,\kappa_1)=(8,2)$ cases. We generate a sequence of meshes obtained by subdividing each triangle into two new ones, and by projecting the new vertices onto $\IS^2$. We obtain a cardinality for $V_l$ of $N_l = \mO(2^l)$ and set $L=9$. First, we study the convergence rates for the first moment and full tensor solutions in \Cref{tab:ResSphere}. We remark a $\mO(h^{3/2})$ and $\mO(h^2)$ convergence rate for the Dirichlet and Neumann traces respectively. Also, we notice an oscillatory behavior of the error for the Dirichlet trace for the HF case.

\begin{table}[t]
\renewcommand\arraystretch{1.7}
\begin{center}
\footnotesize
\resizebox{9cm}{!} {
\begin{tabular}{
    >{\centering\arraybackslash}m{2cm}
    |>{\centering\arraybackslash}m{4.5cm}
    |>{\centering\arraybackslash}m{4.5cm}
    }
\vspace{0.1cm}
&  LF: ($\kappa_0,\kappa_1)=(0.4,1)$ & HF: ($\kappa_0,\kappa_1)=(2,8)$ \\ \hline
 $k=1,\xi_L$ ~~~~\includegraphics[width=1.1\linewidth]{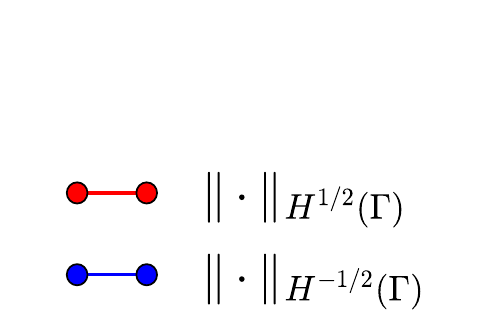} &\includegraphics[width=1\linewidth]{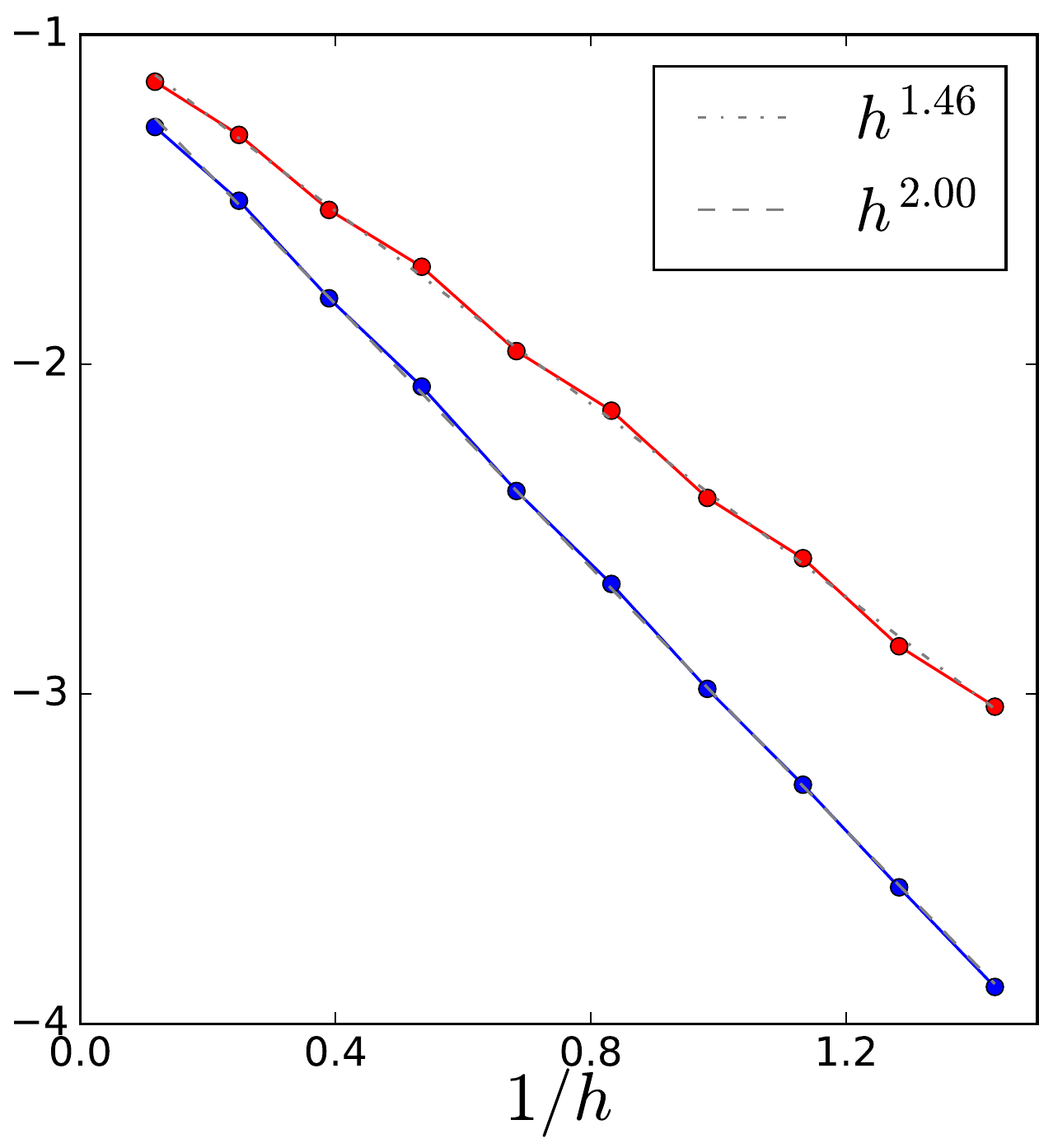} &  \includegraphics[width=1\linewidth]{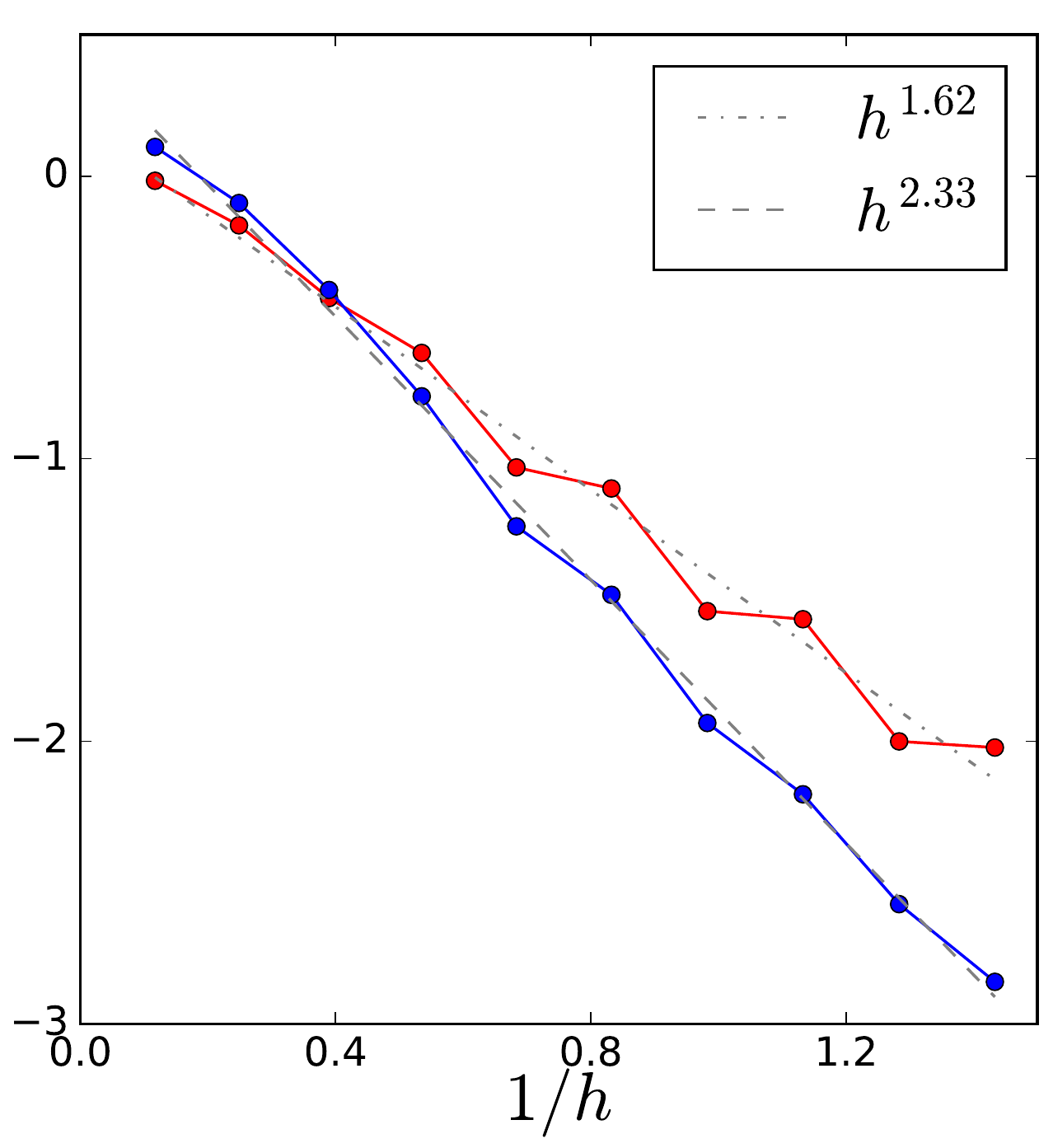}\\ \hline
 $k=2,\Sigma_L$ ~~~~\includegraphics[width=1.1\linewidth]{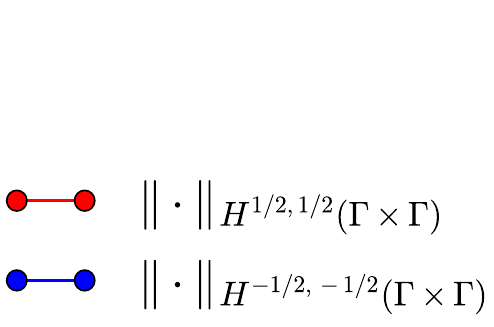} &\includegraphics[width=1\linewidth]{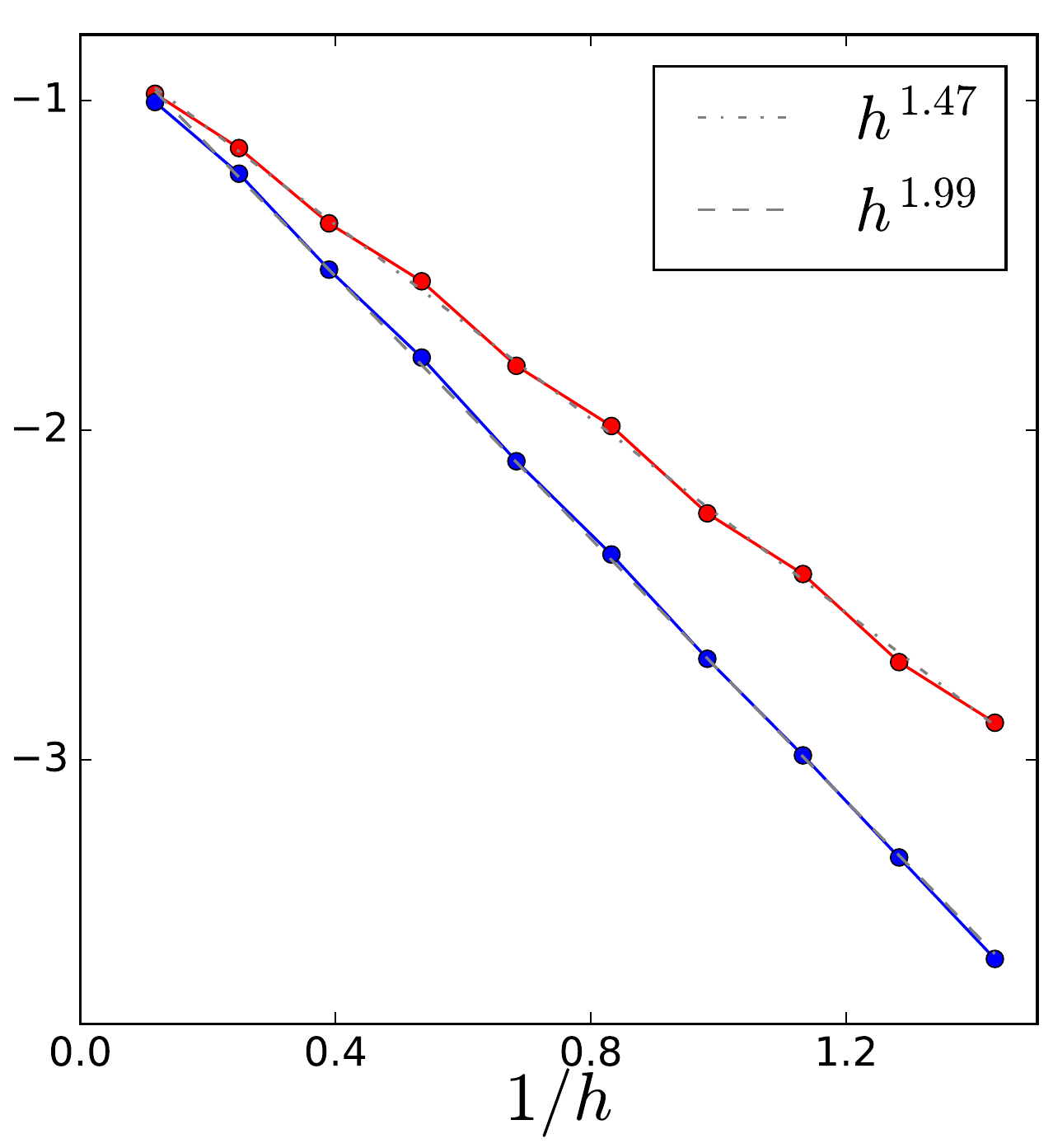} &  \includegraphics[width=1\linewidth]{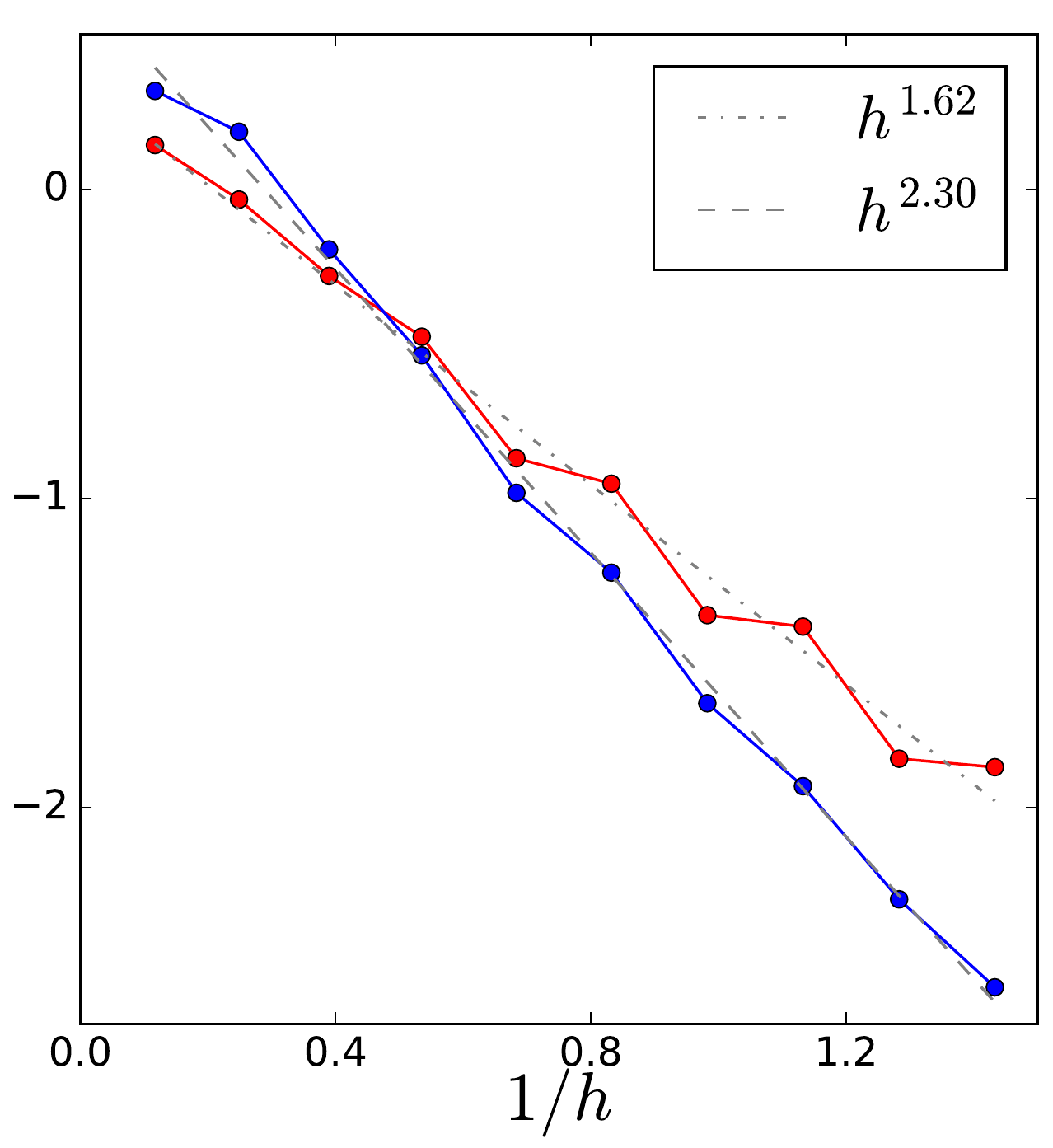}\\ \hline
 \end{tabular}}
\caption{Relative errors in energy norm of the Dirichlet and Neumann Traces on $\IS^2$ and $(\IS^2)^{(2)}$ for the LF and HF cases. Relative energy norm error for Dirichlet (red) and Neumann (blue) trace components with respect to the inverse mesh density $1/h$. }
\label{tab:ResSphere}
\end{center} 
\end{table}  

Next, we focus on the sparse tensor approximation and the minimal resolution level.  For values of $L_0 \in \{0,1,2,3,4\}$, in \Cref{tab:ResCTDof} we represent the relative energy norm error of $\hat{\Sigma}_L (L_0)$ versus the full solution $\Sigma_L$ function to $1/h$. We restrict the case $L_0=4$ to the HF case, as lower refinement levels give satisfactory results. As expected, we remark that for a sufficient minimal resolution level, the solution in the sparse tensor space converges with the same rate as the full solution $\Sigma_L$ (in black).

We also represent the precision $r$, which represents the number of elements per wavelength in the $x$-axis. For the HF case: (i) the sparse solution inherits of limited convergence rate for small values of $1/h$; and, (ii) appears to oscillate less than in the full space.
\begin{table}[t]
\renewcommand\arraystretch{1.7}
\begin{center}
\footnotesize
\resizebox{10cm}{!} {
\begin{tabular}{
    >{\centering\arraybackslash}m{2cm}
    |>{\centering\arraybackslash}m{4.5cm}
    |>{\centering\arraybackslash}m{4.5cm}
    }
\vspace{0.1cm}
&   Dirichlet trace  & Neumann trace \\ \hline
 LF:$\hat{\Sigma}_L(L_0)$  &  \includegraphics[width=1\linewidth]{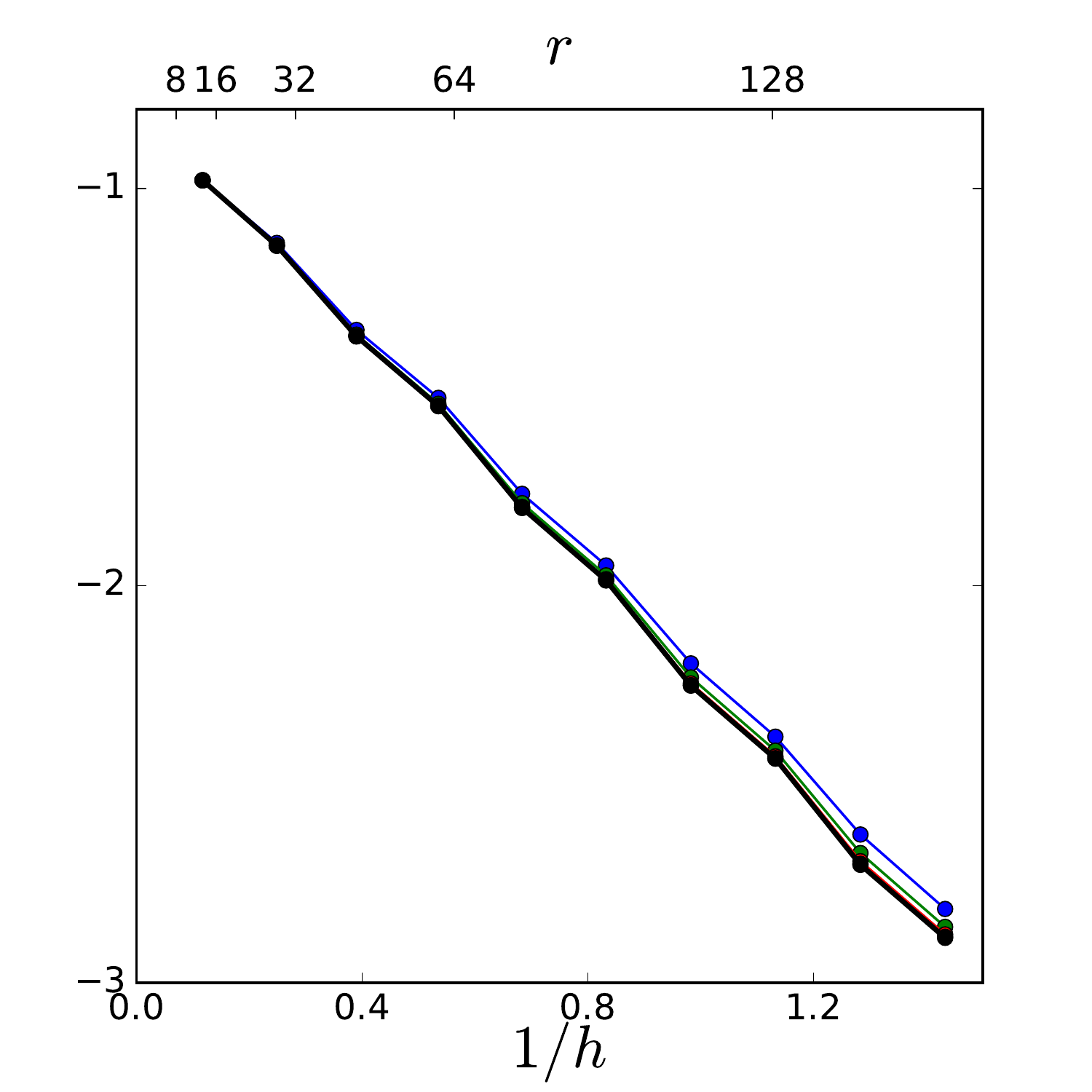} &  \includegraphics[width=1\linewidth]{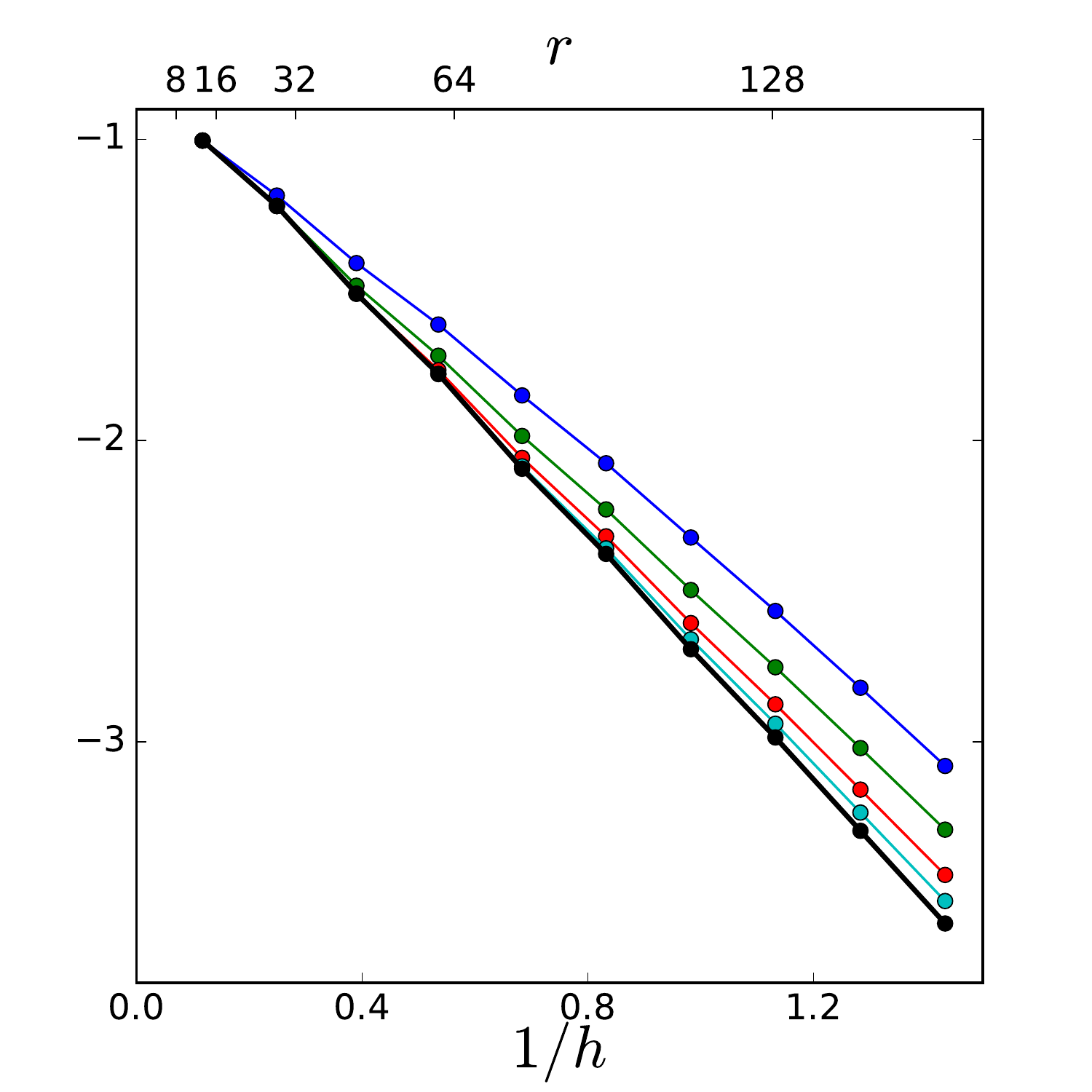}\\ \hline
 HF:$\hat{\Sigma}_L(L_0)$ &\includegraphics[width=1\linewidth]{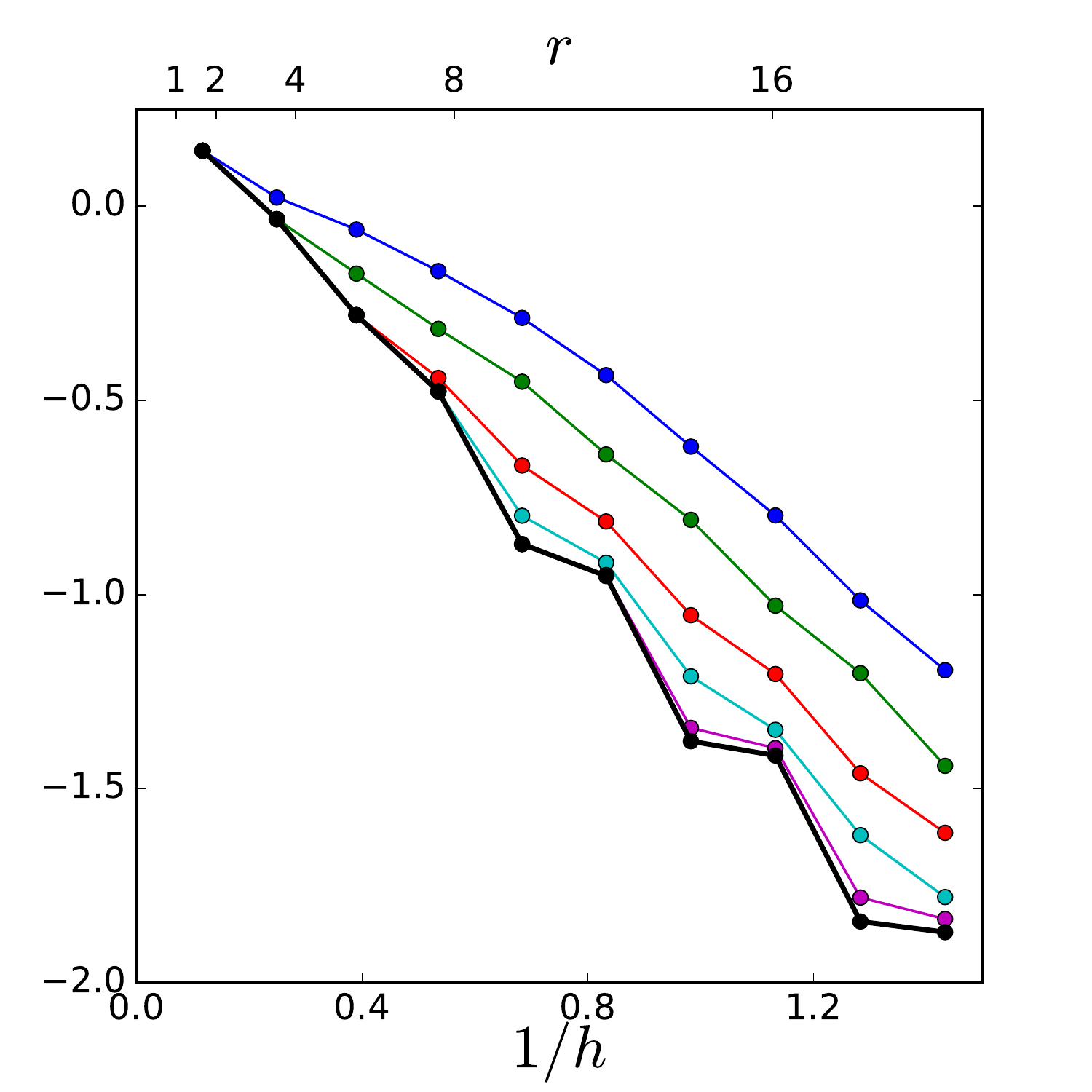} &  \includegraphics[width=1\linewidth]{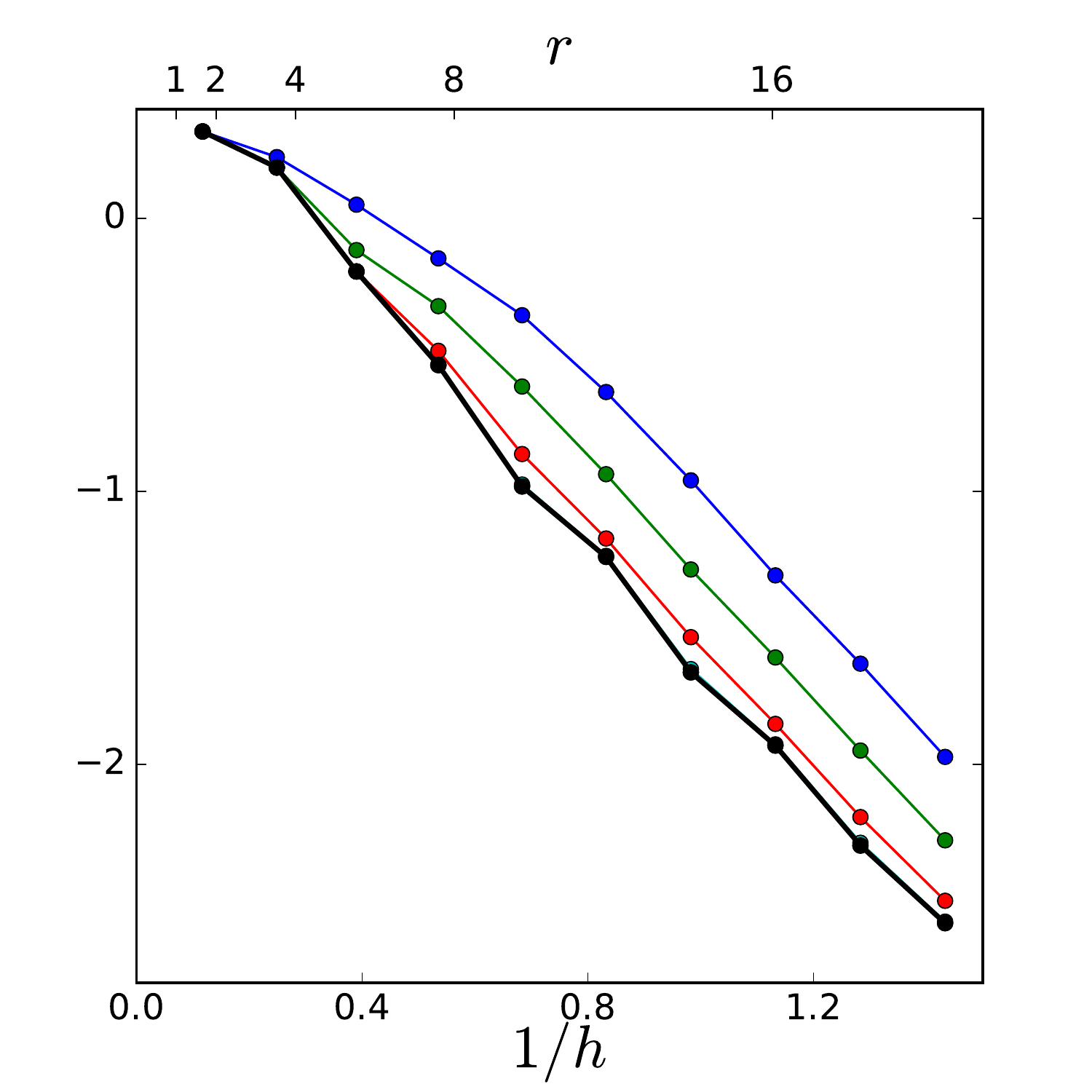}\\ \hline
 \multicolumn{3}{c}{\includegraphics[width=0.9\linewidth]{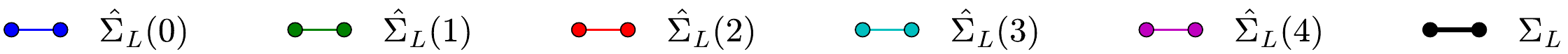}}
 \end{tabular}}
 \vspace{-.2cm}
\caption{Relative errors in energy norm function to $h$ of the Dirichlet and Neumann Traces on ${(\IS^2)}^{(2)}$ for the LF and HF cases.}
\label{tab:ResCT}
\end{center} 
\end{table}  

In order to better assess the quality of the sparse approximate function to $L_0$, we present in \Cref{tab:ResCTDof} the same energy norm errors as in \Cref{tab:ResCT} function to dofs the number of dofs used to get the approximation. The optimal resolution level $\hat{L}_0$ depends on the type of trace and the frequency. 
\begin{table}[t]
\renewcommand\arraystretch{1.7}
\begin{center}
\footnotesize
\resizebox{10cm}{!} {
\begin{tabular}{
    >{\centering\arraybackslash}m{2cm}
    |>{\centering\arraybackslash}m{4.5cm}
    |>{\centering\arraybackslash}m{4.5cm}
    }
\vspace{0.1cm}
&   Dirichlet trace  & Neumann trace \\ \hline
 LF:$\hat{\Sigma}_L(L_0)$  &  \includegraphics[width=1\linewidth]{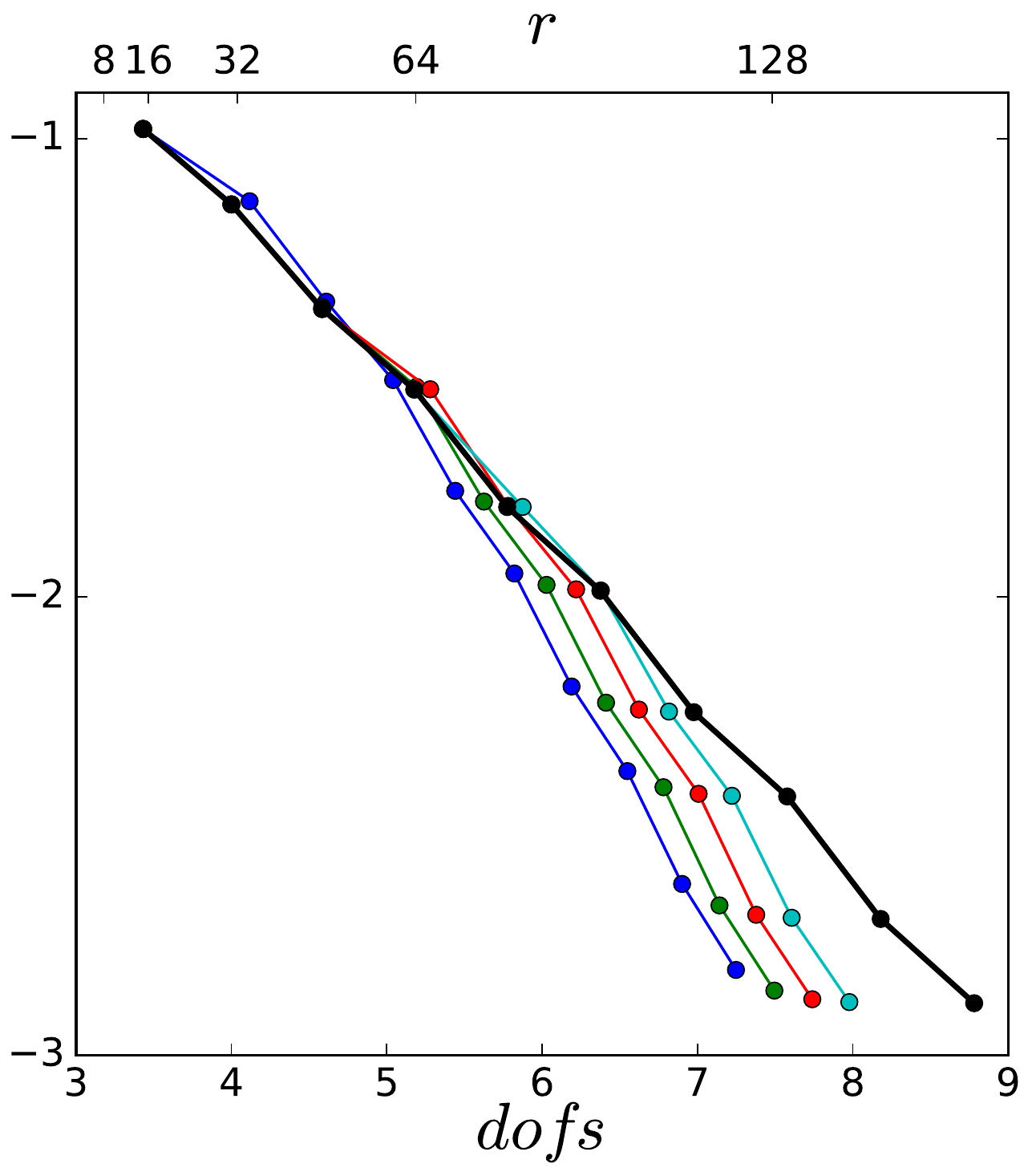} $\hat{L}_0 = \textcolor{blue}{0}$ &  \includegraphics[width=1\linewidth]{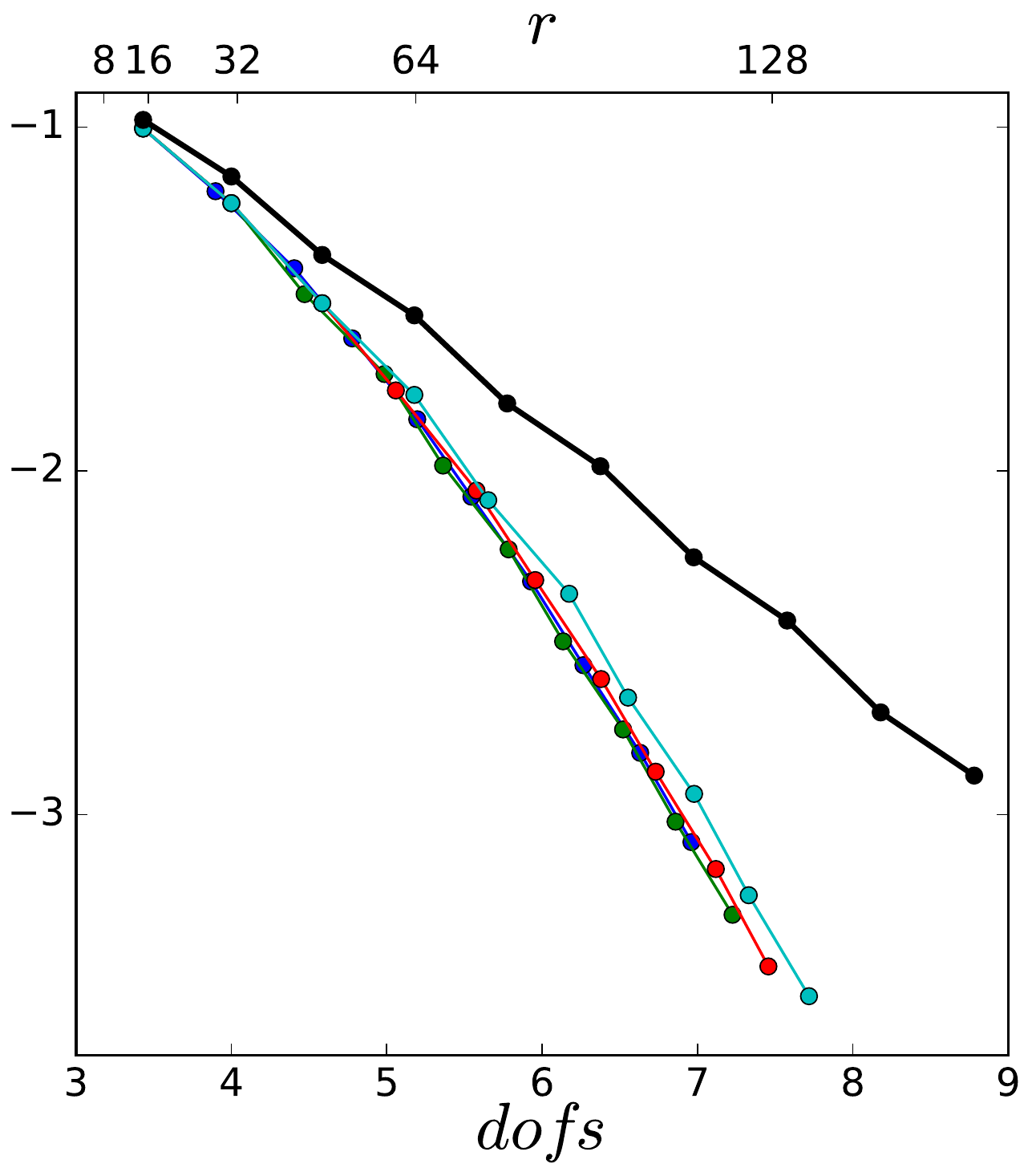} $\hat{L}_0 = \textcolor{blue}{0},\textcolor{Greeen}{1}$\\ \hline
 HF:$\hat{\Sigma}_L(L_0)$ &\includegraphics[width=1\linewidth]{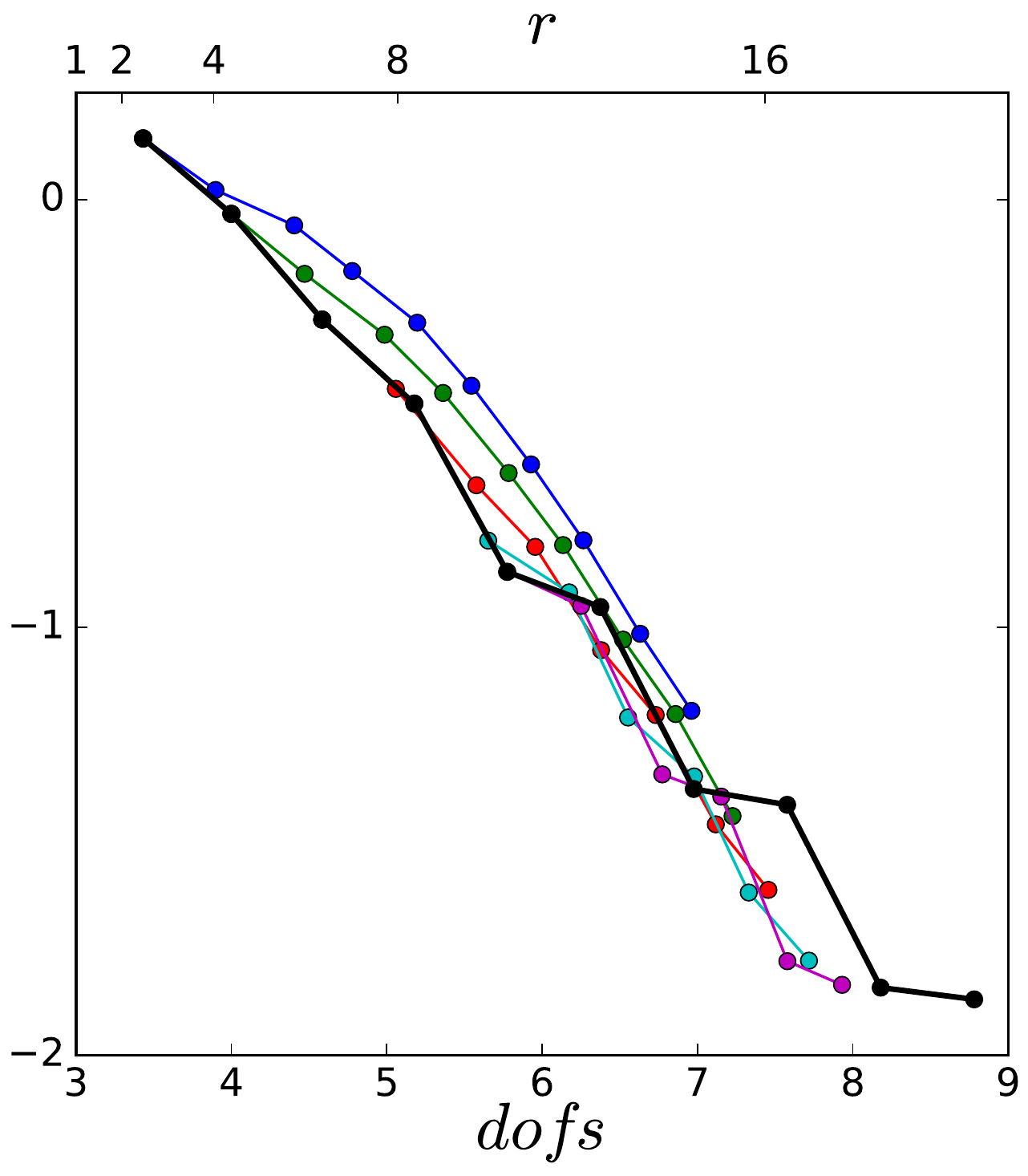}  $\hat{L}_0 = \textcolor{BlueGreeen}{3},\textcolor{Pink}{4}$&  \includegraphics[width=1\linewidth]{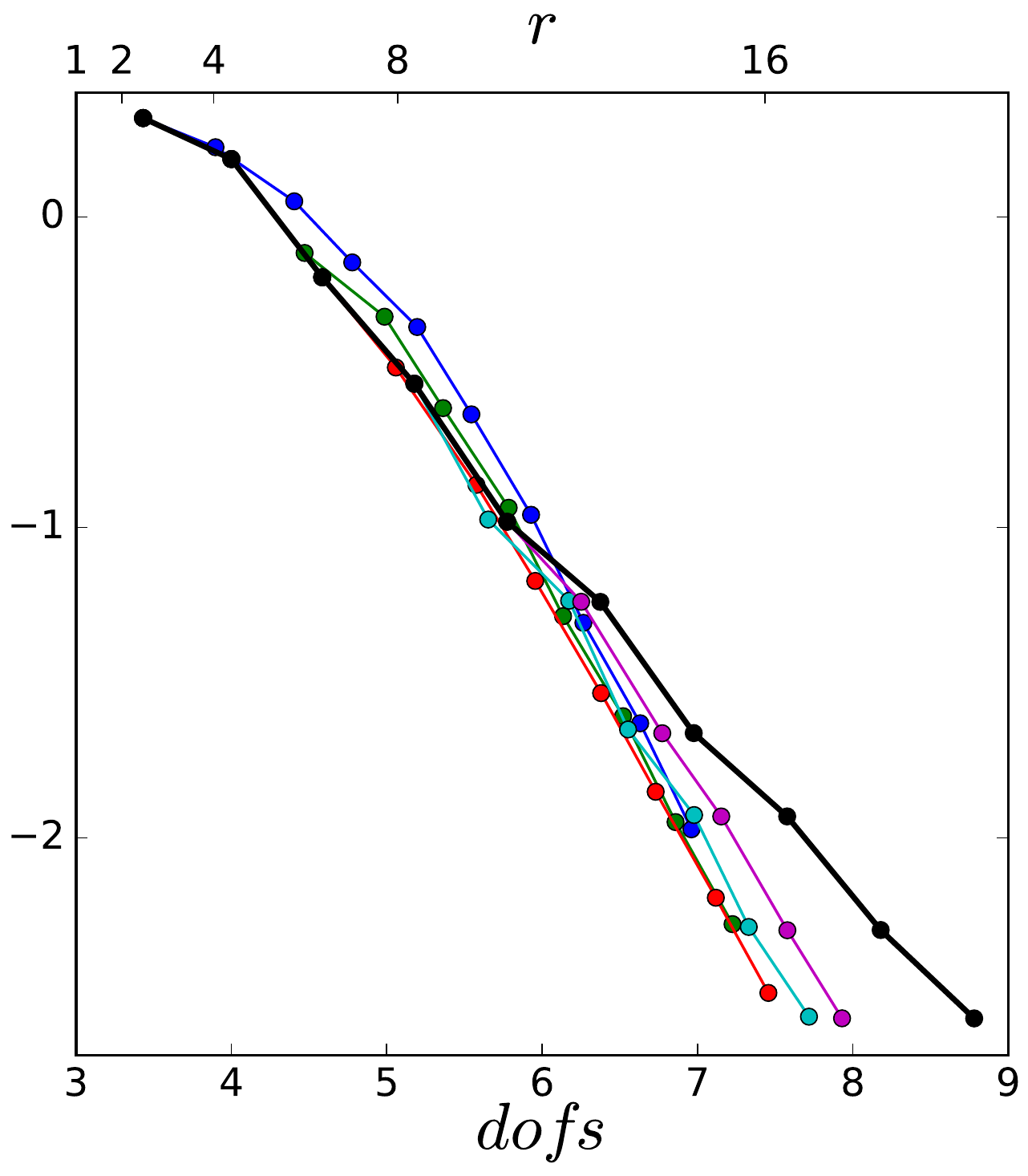} $\hat{L}_0 = \textcolor{Greeen}{1},\textcolor{red}{2}$\\ \hline
 \multicolumn{3}{c}{\includegraphics[width=0.9\linewidth]{figs/legendsparseL0.pdf}}
 \end{tabular}}
 \vspace{-.2cm}
\caption{Relative errors in energy norm function to dofs of the Dirichlet and Neumann Traces on ${(\IS^2)}^{(2)}$ for the LF and HF cases.}
\label{tab:ResCTDof}
\end{center} 
\end{table}  
In the sequel, we focus on the symmetric case (refer to \Cref{subs:cov}). In \Cref{tab:ResCTDofSym} we corroborate that the symmetry of the right-hand side benefits the sparse tensor approximation, as roughly half of the linear systems of the classical CT are needed. 
\subsection{Unit Sphere: Iterative solvers}
\label{subsec:TP_Iterative}
We focus on the practical implementation of the CT. We solve the sub-blocks of the symmetric CT with GMRES and a tolerance of $10^{-8}$. \Cref{fig:res} showcases the number of dofs and GMRES iterations needed to reach the prescribed tolerance of each sub-block for given indices $l_1$ and $l_2$. We highlight the case $L=7$ and $L_0=0$ with bold (resp.~italic) notation for the added (resp.~subtracted) sub-blocks for the symmetric CT (\textit{cf.}~\Cref{tab:CombSym}). Below, as a reference, we show the results for the first moment. 
The number of dofs on the diagonal (i.e.~the bold and italic ones) are of size $N_0 \times N_L$ (resp.~$N_0 \times N_{L-1}$). Thus, the resolution of subsystems of equivalent size when implementing the CT. 

We also remark the effectiveness of Calder\'on preconditioning, as we notice that the number of iterations remains of $8$ independently of $l_1$ and $l_2$. Also, the number of iterations passes from $3$ for first moment to $8$, likely due to $\kappa_2 (\bA \otimes \bB) = \kappa_2(\bA)\kappa_2(\bB)$.
\begin{table}[t]
\renewcommand\arraystretch{1.7}
\begin{center}
\footnotesize
\resizebox{10cm}{!} {
\begin{tabular}{
    >{\centering\arraybackslash}m{2cm}
    |>{\centering\arraybackslash}m{4.5cm}
    |>{\centering\arraybackslash}m{4.5cm}
    }
\vspace{0.1cm}
&   Dirichlet trace  & Neumann trace \\ \hline
 LF: $\hat{\Sigma}_L(L_0)$  &  \includegraphics[width=1\linewidth]{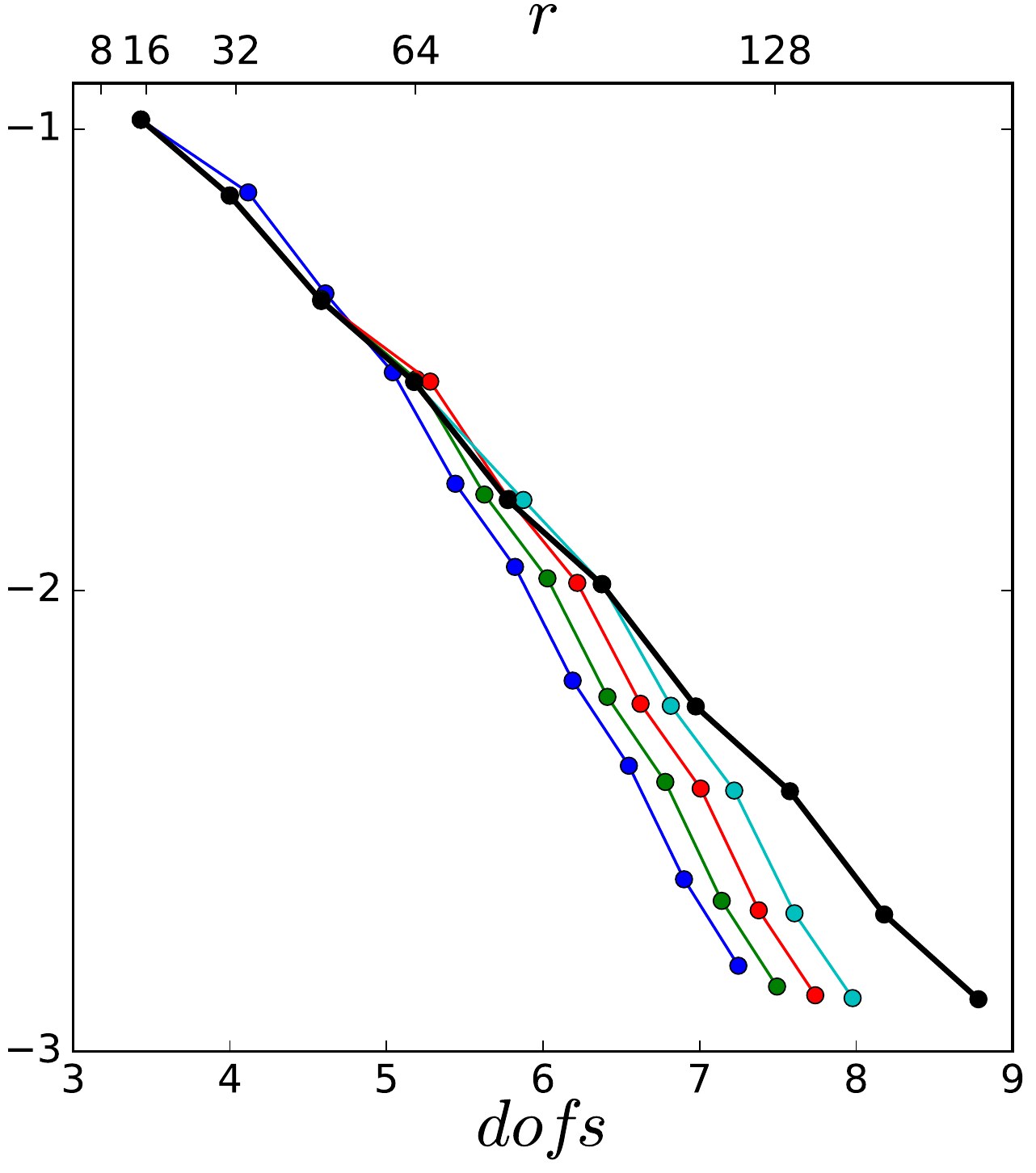} &  \includegraphics[width=1\linewidth]{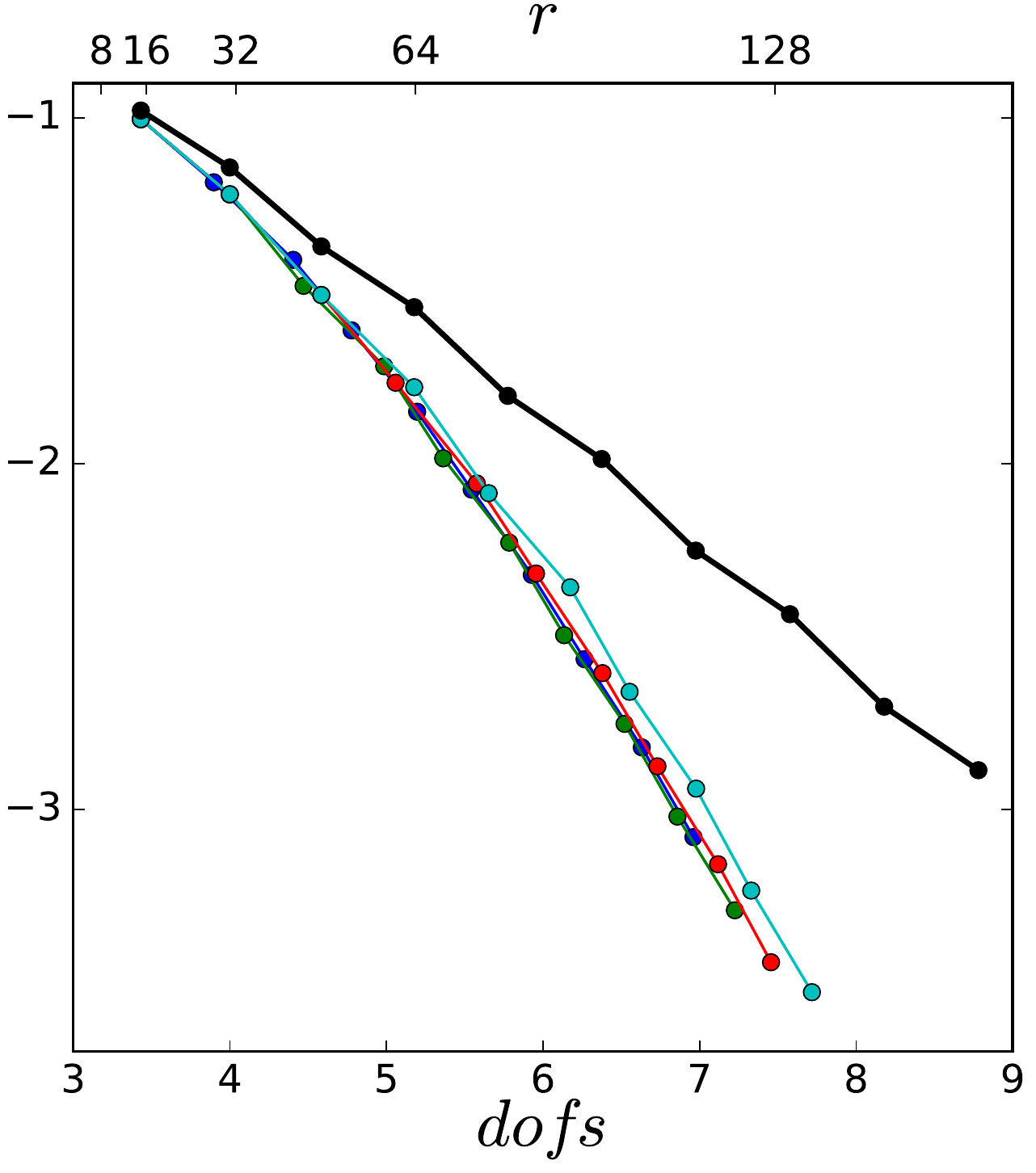}\\ \hline
 HF: $\hat{\Sigma}_L(L_0)$ &\includegraphics[width=1\linewidth]{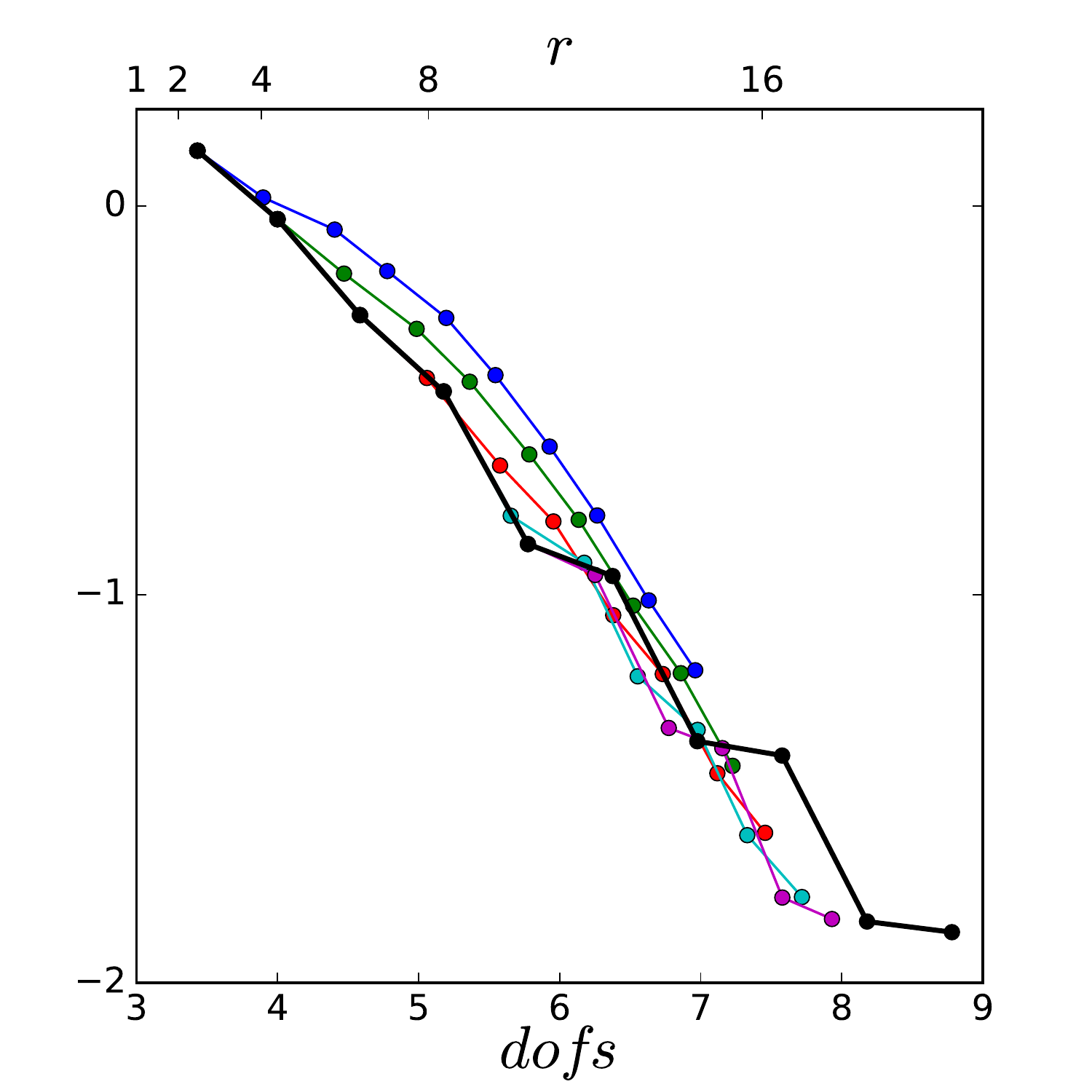} &  \includegraphics[width=1\linewidth]{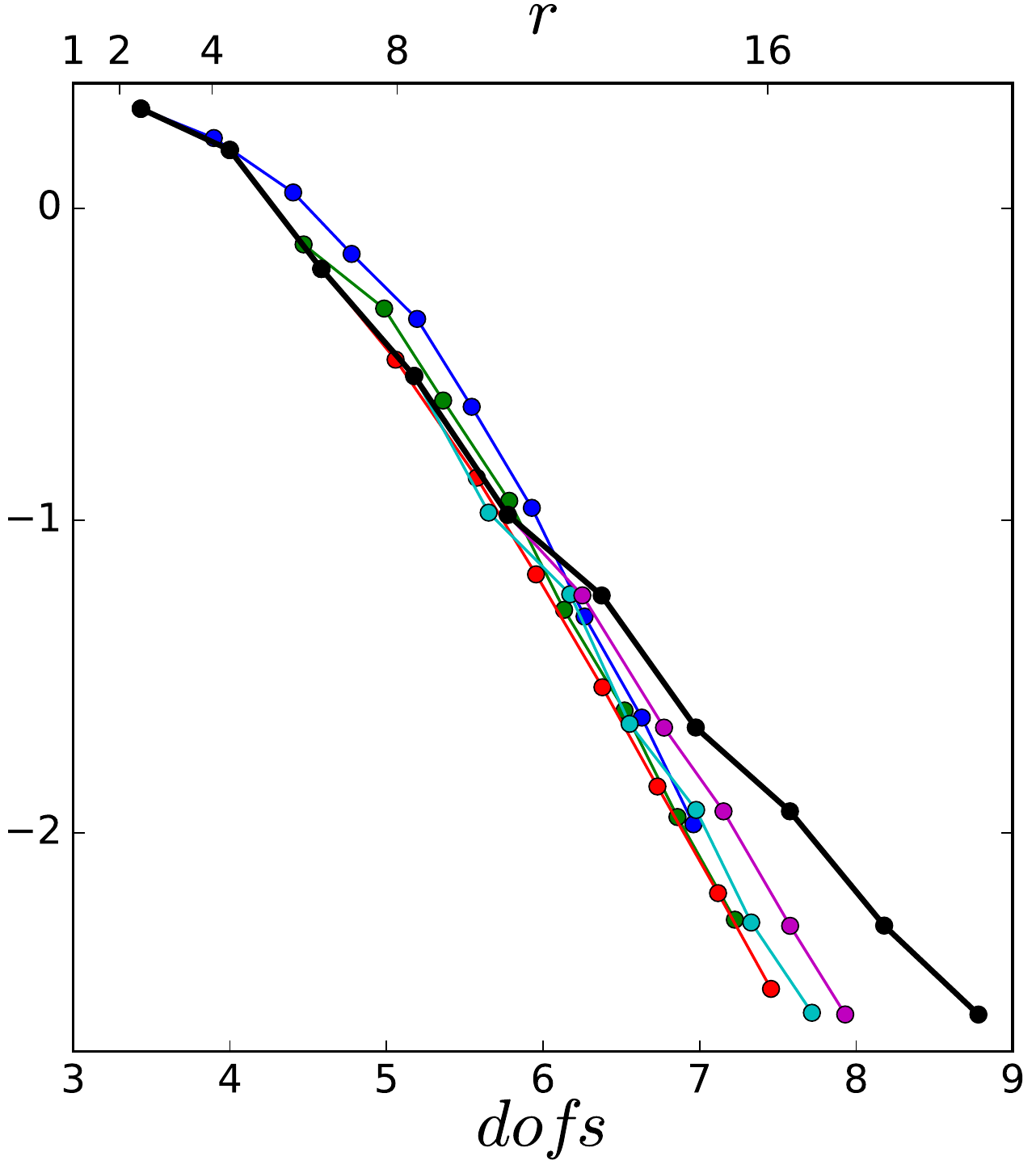}\\ \hline
 \multicolumn{3}{c}{\includegraphics[width=0.9\linewidth]{figs/legendsparseL0.pdf}}
 \end{tabular}}
\vspace{-.2cm}
\caption{Symmetric case: relative errors in energy norm function to dofs of the Dirichlet and Neumann Traces on ${(\IS^2)}^{(2)}$ for the LF and HF cases.}
\label{tab:ResCTDofSym}
\end{center} 
\end{table}  
Besides, we show the solver times in seconds in \Cref{tab:Results_time}. Accordingly, we consider the HF case: we plot the relative residual $l^2$-error of GMRES in \Cref{figs:GmresHF} (in log-log scale). First, in black (resp.~gray), we represent the iterations for $k=1$ and $L=0$ (resp.~$l\in \{1,\cdots,7\}$). We remark that: (i) the iteration count increases compared to the LF case; (ii) the relative error is reasonably resilient with the meshwidth, as the curves are close to each other; and, (iii) fast convergence of the residual towards zero. Still, mesh independence is key in reducing the sensibility to the meshwidth but does not necessarily leads to faster convergence of GMRES, as the condition number remains bounded but can be large, as highlighted for the second moment. Indeed, for several values of $(l_1,l_2)$ we add error convergence curves of GMRES for $k=2$, and renew the previous remark, with a noticeable deterioration of the convergence results. 

Based on the above, we further investigate the properties of the resulting linear systems and the convergence behavior. In \Cref{tab:ResSuper}, we portray again the GMRES residual error, in a semi-log scale (first row). Also, we present the convergence factor at each iteration (second row). The first row shows that all curves present at least a linear decrease, ensuring convergence of GMRES. Moreover, the convergence for the first moment is too fast to observe a super-linear phase. The second moment curves present poor convergence rates close to $1$, with a very slow decrease (see the bottom-right figure), giving a moderate super-linear behavior, still noticeable for $\Sigma_{1,2}$ and $\Sigma_{0,3}$ (see the top-right figure). To finish, we introduce $\bM$ the mass and $\bA$ the impedance matrices. In \Cref{tab:Eigvals}, we plot the eigenvalues distributions of the resulting linear systems (in strong form, such as done in \cite{kleanthous2018calderon}). We remark that the spectra present some clustering at one and have similar patterns. Also, we see that the tensor matrix for $k=2$ has a more scattered cluster, and much more outliers. The latter emerges from the property of the tensor operator, and was expectable. To finish, despite the presence of non-compact terms at continuous level, we observe discrete clustering properties due to \Cref{theorem:cluster}.
\begin{figure}[t]
\begin{minipage}{\linewidth}
\begin{minipage}{0.6\linewidth}
\begin{table}[H]{}
\renewcommand\arraystretch{1.4}
\begin{center}
\footnotesize
\resizebox{7.9cm}{!} {
\begin{tabular}{
|>{\centering\arraybackslash}m{0.8cm}
    |>{\centering\arraybackslash}m{1cm}
    |>{\centering\arraybackslash}m{1cm}
    |>{\centering\arraybackslash}m{1cm}
    |>{\centering\arraybackslash}m{1cm}
    |>{\centering\arraybackslash}m{0.6cm}
    |>{\centering\arraybackslash}m{0.6cm}
    |>{\centering\arraybackslash}m{0.6cm}
    |>{\centering\arraybackslash}m{0.7cm}|
} \hline  
$l_2\backslash l_1$  & 0 & 1& 2&  3&   4&  5&6 & 7\\ \hline  
7                    & \textbf{319,696} &  & &  &   &  & & \\  \hline
   6                 &\textit{159,952} & \textbf{307,600} & &&& && \\  \hline
   5                 & 80,080 &\textit{154,000} & \textbf{301,840}& && &  &\\  \hline
   4               & 40,144 & 77,200 &\textit{151312} &\textbf{299,536}  &    &   &  & \\  \hline
   3               & 20,176 &  38,800  & 76,048 & \textit{150,544}&      &   &  & \\  \hline
   2               & 10,192 &  19,600  & 38,416 & &     &   &  & \\  \hline
   1               & 5,200 &  10,000 &  & &     &   &  & \\  \hline
   0               & 2,704 &   &  & &     &   &  & \\  \hline\hline
   $k=1$& 52 & 100 & 196 &  388 &  772  &1,540 & 3,076 & \textbf{6,148}\\  \hline 
\end{tabular}}
\end{center} 
\label{tab:Dofs}
\end{table}  
\end{minipage}
\begin{minipage}{0.28\linewidth}
\begin{table}[H]
\renewcommand\arraystretch{1.4}
\begin{center}
\footnotesize
\resizebox{3.95cm}{!} {
\begin{tabular}{
|>{\centering\arraybackslash}m{0.8cm}
    |>{\centering\arraybackslash}m{0.1cm}
    |>{\centering\arraybackslash}m{0.1cm}
    |>{\centering\arraybackslash}m{0.1cm}
    |>{\centering\arraybackslash}m{0.1cm}
    |>{\centering\arraybackslash}m{0.1cm}
    |>{\centering\arraybackslash}m{0.1cm}
    |>{\centering\arraybackslash}m{0.1cm}
    |>{\centering\arraybackslash}m{0.1cm}|
} \hline   
$l_2\backslash l_1$ & 0 & 1& 2&  3&   4&  5&6 & 7\\ \hline  
7                 & \textbf{8}  &  &  &   &    &   & & \\  \hline
   6                 & \textit{8}  & \textbf{8} &  &   &    &   & & \\  \hline
   5                 & 8 & \textit{8} & \textbf{8} &   &    &   &  &\\  \hline
   4               &  8 & 8 & \textit{8} & \textbf{8}  &    &   &  & \\  \hline
   3               & 8 & 8  & 8 & \textit{8} &      &   &  & \\  \hline
   2               & 8 & 8  & 8 & &     &   &  & \\  \hline
   1               & 8 &  8 &  & &     &   &  & \\  \hline
   0               & 8 &   &  & &     &   &  & \\  \hline\hline
   $k=1$& 3 & 3 & 3 &  3 &  3  & 3 & 3& \textbf{3} \\  \hline 
\end{tabular}}
\end{center} 
\end{table}  
\end{minipage}
\end{minipage}
\caption{Numbers of dofs for each subsystem (left) and GMRES iterations to reach prescribed tolerance (right).}
\label{fig:res}
\end{figure}
\begin{figure}[t]
\vspace{-0.4cm}
\includegraphics[width=.9\linewidth]{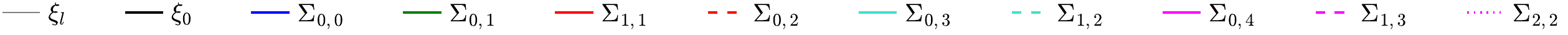}
\begin{minipage}{0.5\linewidth}
\vspace{-0.4cm}
\begin{table}[H]
\renewcommand\arraystretch{1.4}

\footnotesize
\resizebox{6.5cm}{!} {
\begin{tabular}{|c|c|c|c|c|c|c|c|c|} \hline   
$ l_2\backslash l_1$ & 0    &  1   &     2  &  3  & 4&  5&6 & 7\\ \hline
   7                 & \textbf{1,588} &      &        &     & &   &  & \\  \hline
   6                 &  \textit{613}    & \textbf{1,320} &         &     & &   &  & \\  \hline
   5                 &  459    &  \textit{688}  &  \textbf{1,167}  &     & &   & &  \\  \hline
   4               &  209    &  436  & \textit{601}&   \textbf{1,107}& &   & & \\  \hline
   3               &  98.8    &  157  & 392     &    \textit{ 583}& &   & & \\  \hline
   2                 &  49.4    &   77.3  &174     &     & &   & & \\  \hline
   1               &  25.9    &   44.3  &        &     & &   & & \\  \hline
   0                  & 8.83    &     &        &     & &   &  & \\  \hline\hline
   $k=1$ & 0.246  & 0.328  & 0.767 & 1.25  & 1.83 & 5.51  &10.7 & \textbf{17.7} \\   \hline
\end{tabular}}
\vspace{0.1cm}
\caption{}
\label{tab:Results_time}
\end{table}
\end{minipage}
\begin{minipage}{0.5\linewidth}
\vspace{-0.4cm}
\begin{figure}[H]
\includegraphics[width=1\linewidth]{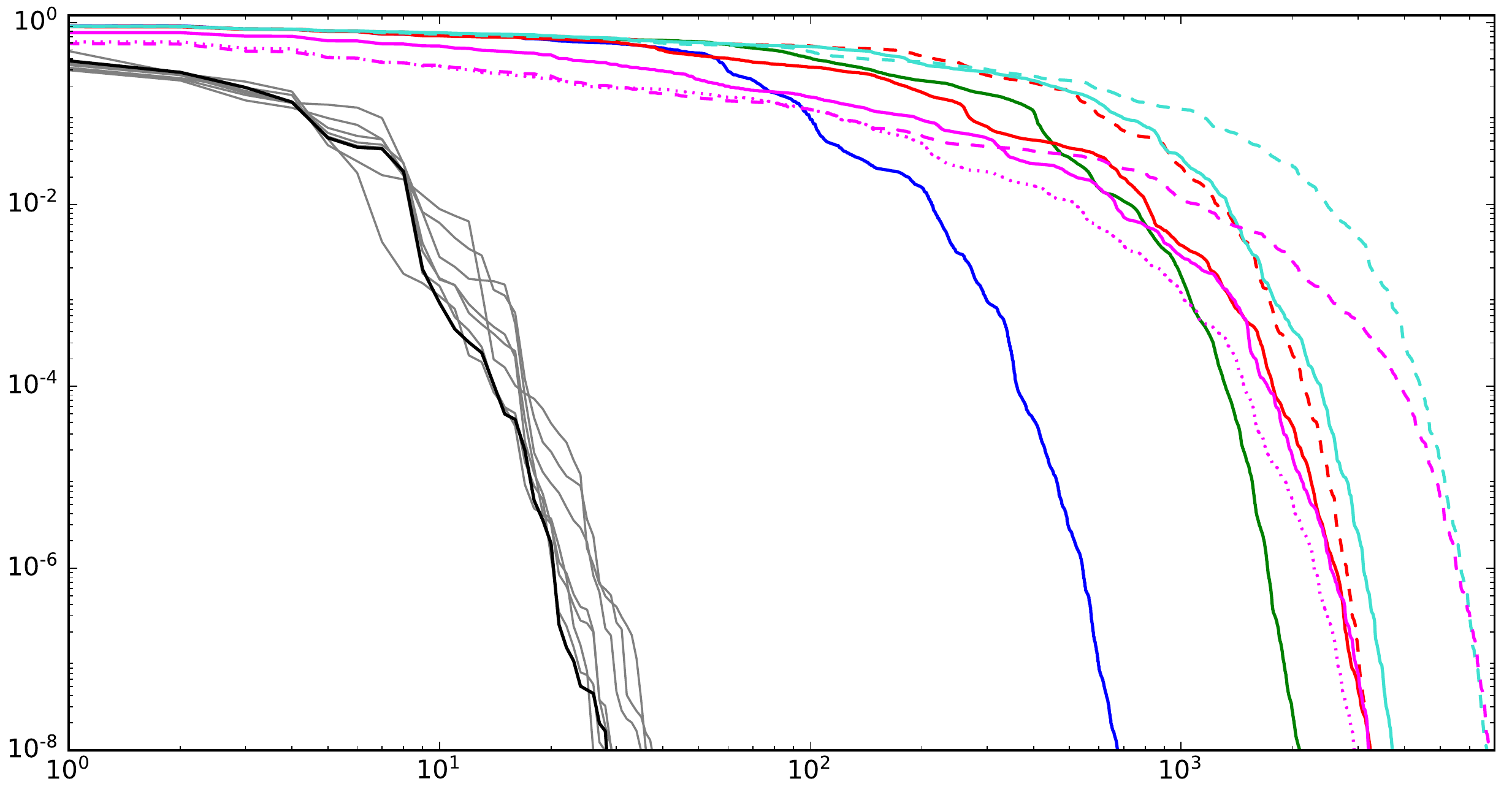}
\caption{}
\label{figs:GmresHF}
\end{figure}
\end{minipage}
\vspace{-.6cm}
\caption*{\Cref{tab:Results_time} (left) : solver times (in seconds) for the LF case. \Cref{figs:GmresHF} (right): relative $l^2$-error of GMRES in log-log scale for the HF case.}
\end{figure}
\begin{table}[t]
\renewcommand\arraystretch{1.7}
\begin{center}
\footnotesize
\resizebox{9cm}{!} {
\begin{tabular}{
    >{\centering\arraybackslash}m{1cm}
    |>{\centering\arraybackslash}m{5.3cm}
    |>{\centering\arraybackslash}m{5.3cm}
    }
\vspace{0.1cm}
&$k=1$ & $k=2$ \\ \hline\hline 
\multicolumn{3}{c}{Residual error of GMRES $\|\br_m\|_2$ function to iteration count $m$} \\\hline
$\|\br_m\|_2$ &\includegraphics[width=\linewidth]{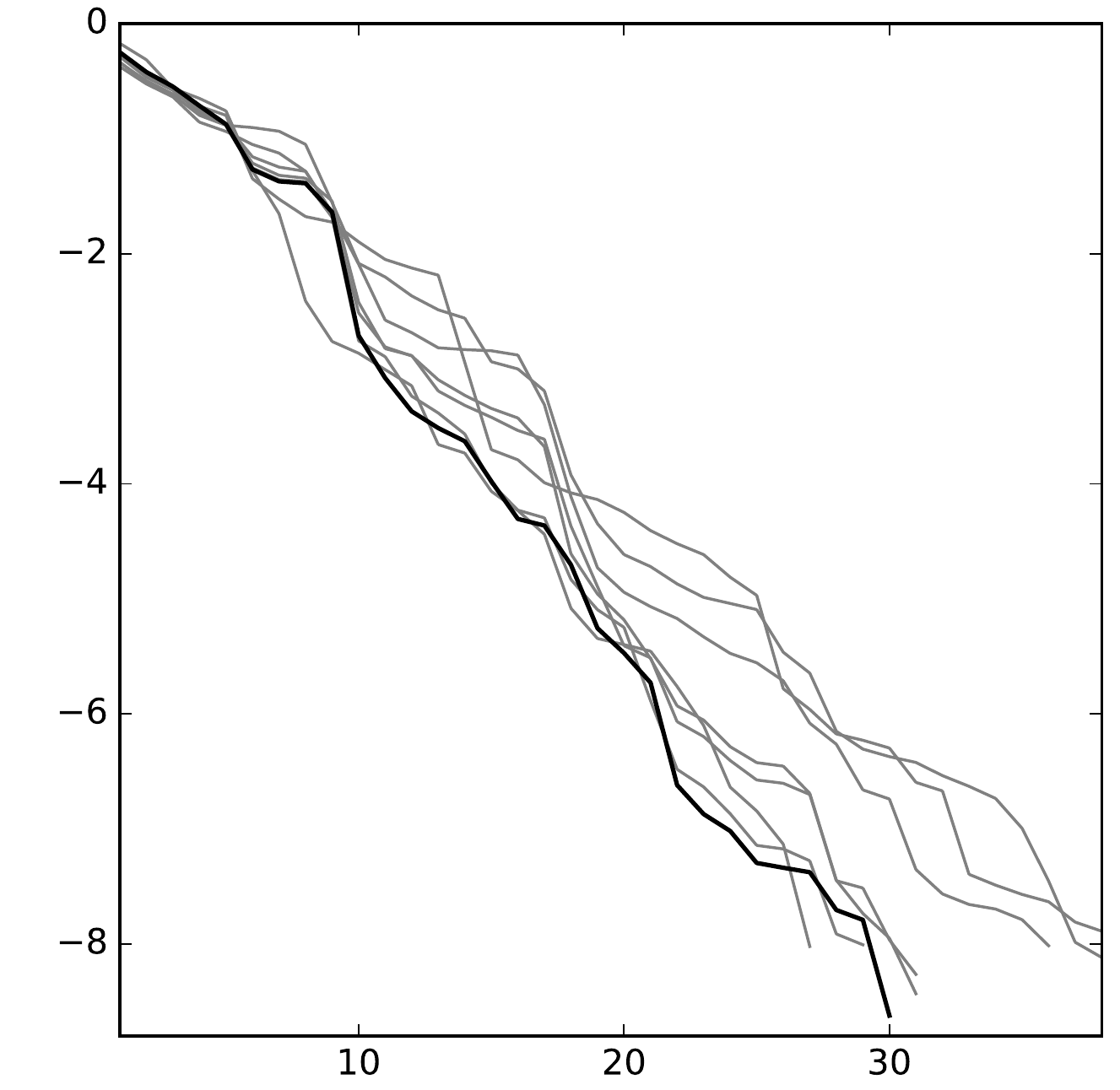}  &  \includegraphics[width=\linewidth]{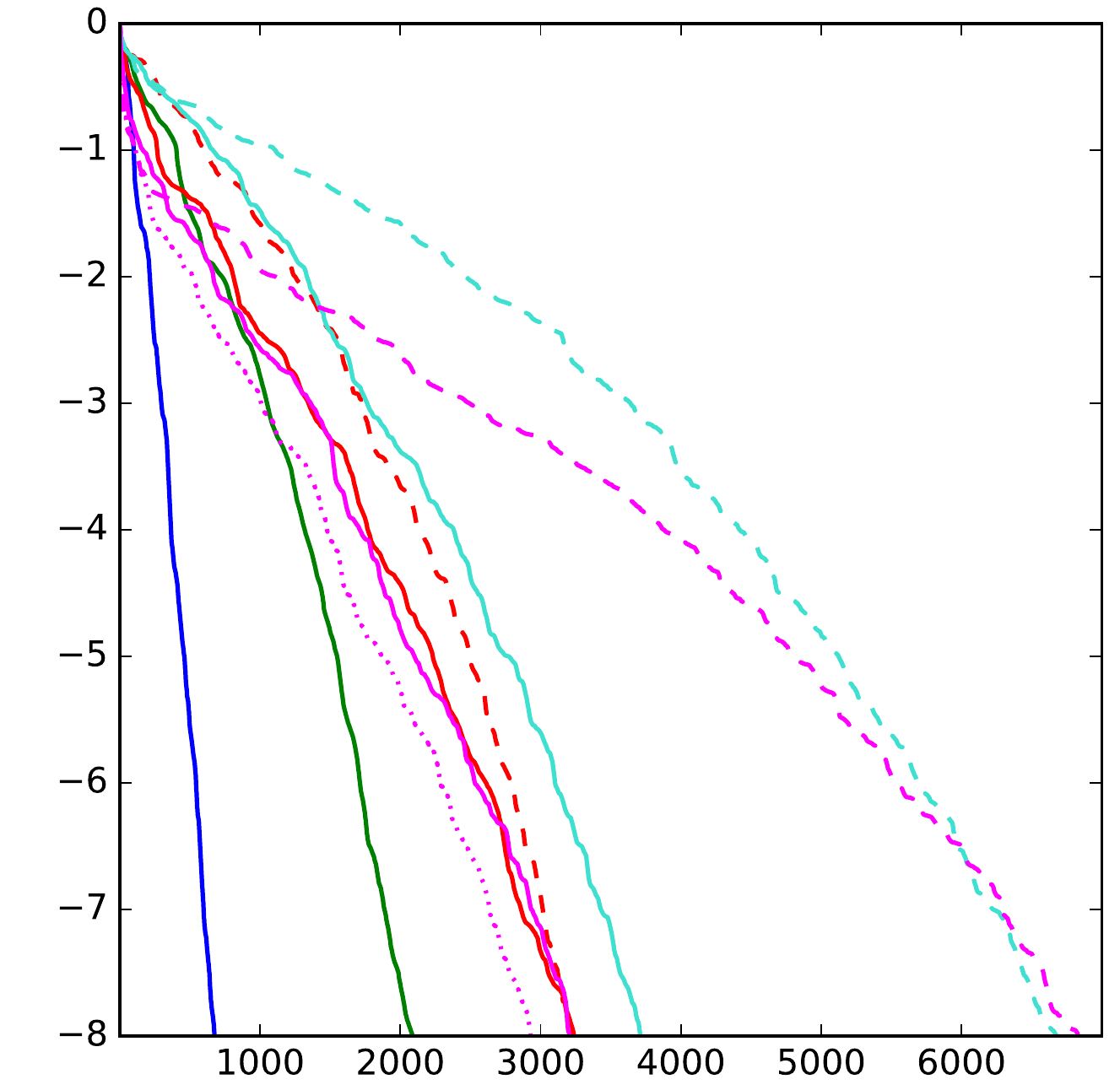}\\\hline
\multicolumn{3}{c}{Convergence estimate $Q_m$ function to iteration count $m$} \\\hline
 $Q_m$&\includegraphics[width=\linewidth]{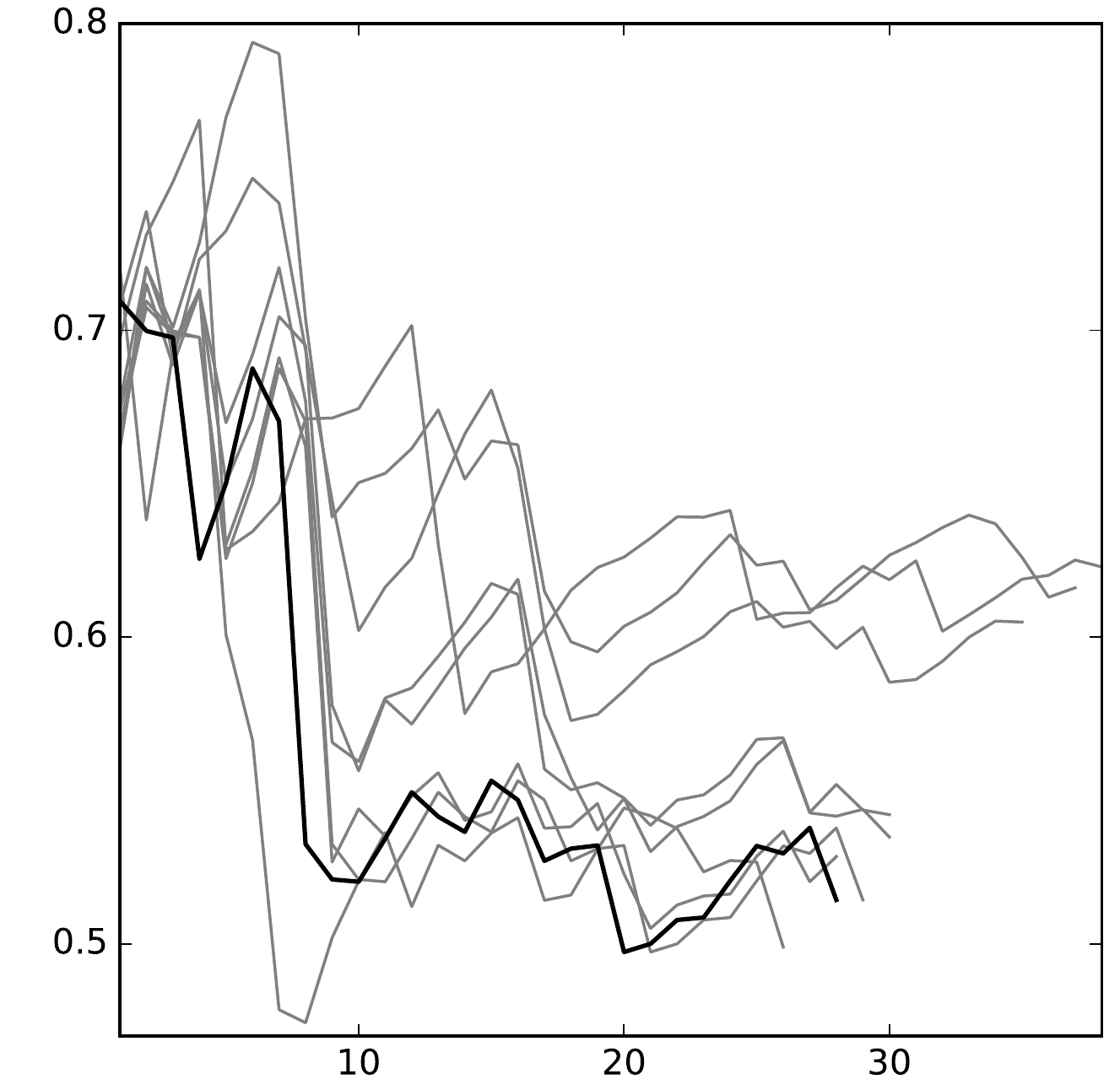}  &  \includegraphics[width=\linewidth]{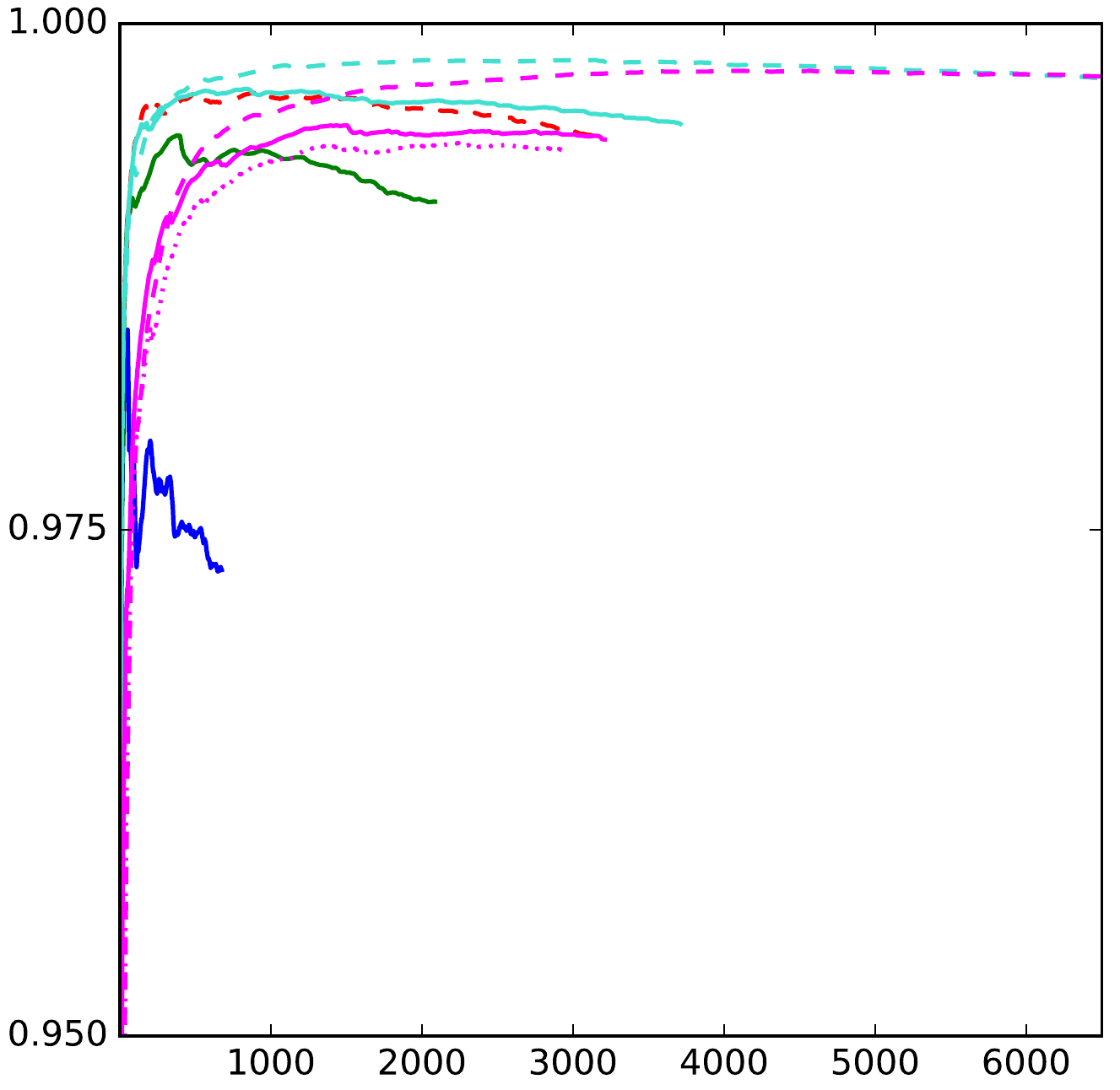}\\ \hline
 \multicolumn{3}{c}{\includegraphics[width=0.9\linewidth]{figs/legendk12.pdf}} 
 \end{tabular}}
 \vspace{-0.2cm}
\caption{HF case: Complete survey of the GMRES convergence}
\label{tab:ResSuper}
\end{center} 
\end{table}  
\begin{table}[t]
\renewcommand\arraystretch{1.7}
\begin{center}
\footnotesize
\begin{tabular}{
    >{\centering\arraybackslash}m{0.8cm}
    |>{\centering\arraybackslash}m{3.5cm}
    |>{\centering\arraybackslash}m{3.5cm}
    |>{\centering\arraybackslash}m{3.5cm}
    }
\vspace{0.1cm}
Case&$L=2$ & $L=3$& $L=4$ \\ \hline\hline
$k=1$  & \includegraphics[width=1.1\linewidth]{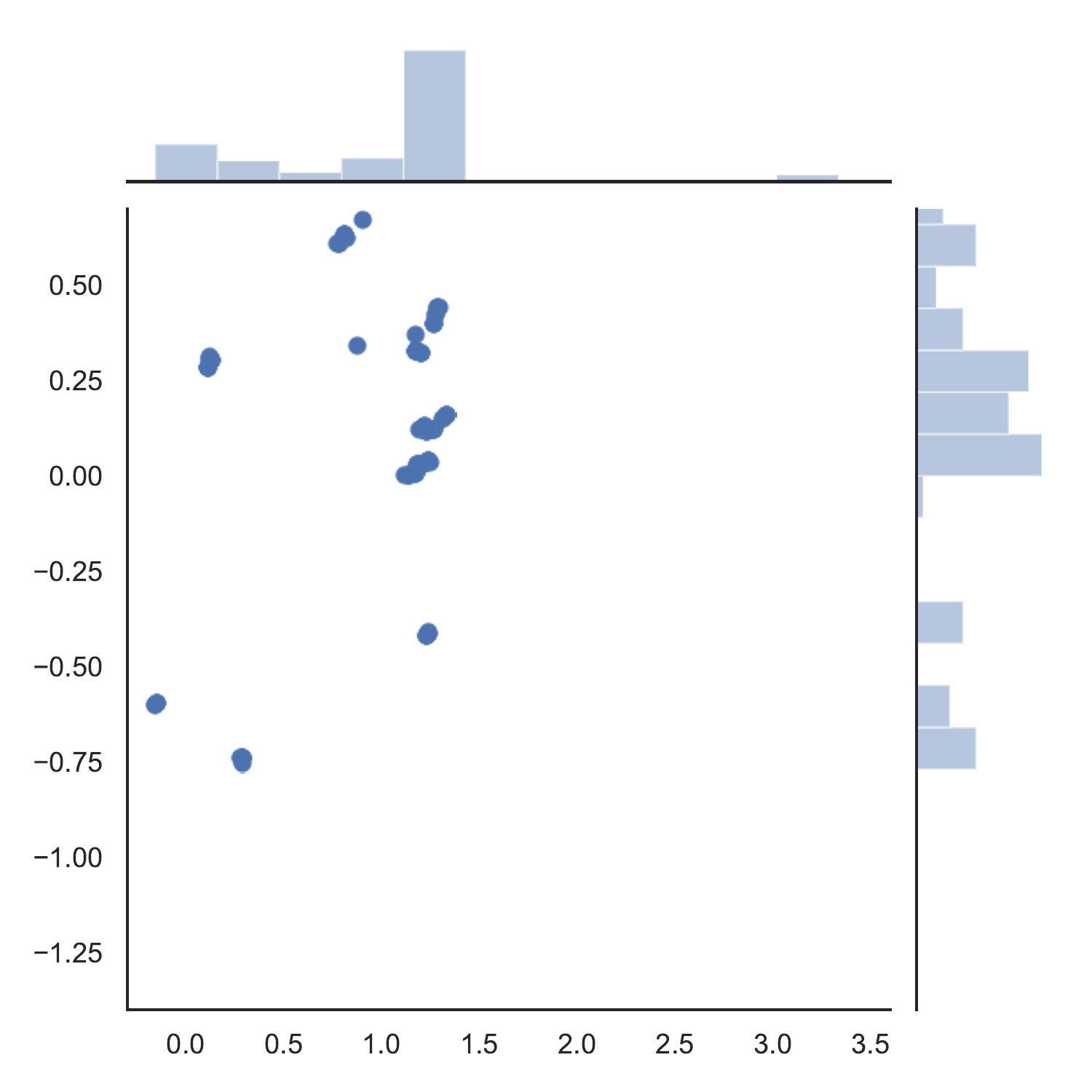} &\includegraphics[width=1.1\linewidth]{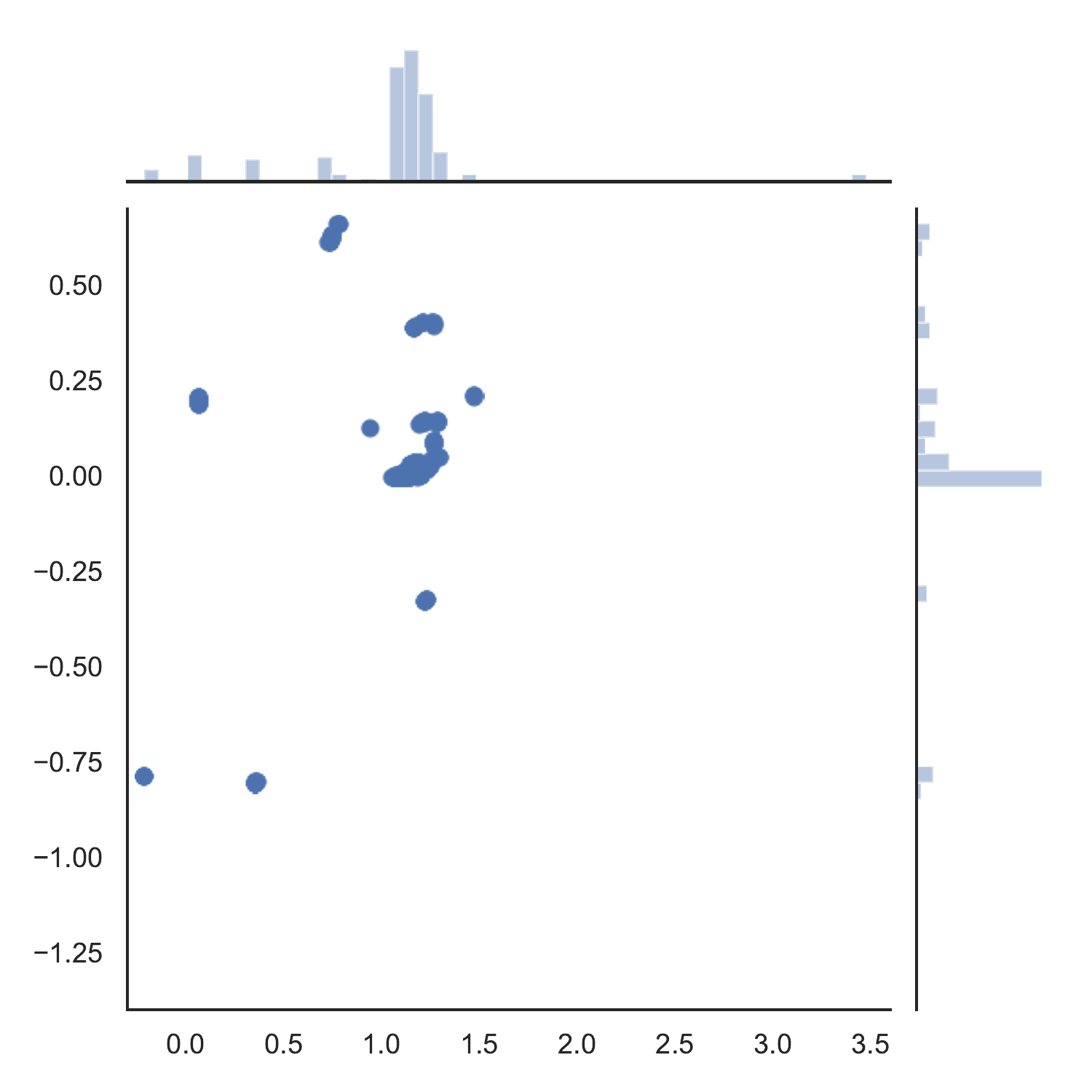} &\includegraphics[width=1.1\linewidth]{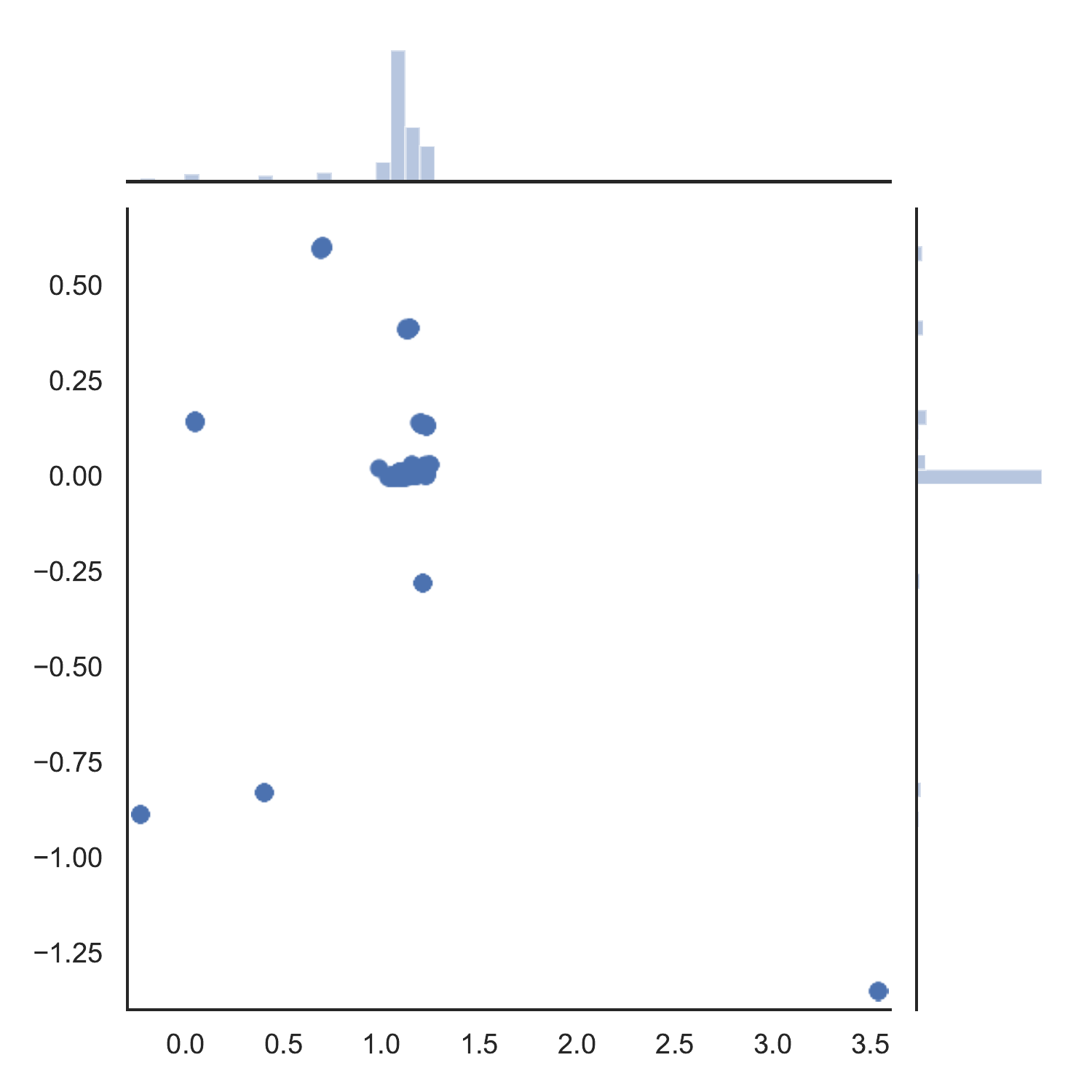} \\ \hline
$k=2$ &\includegraphics[width=1.1\linewidth]{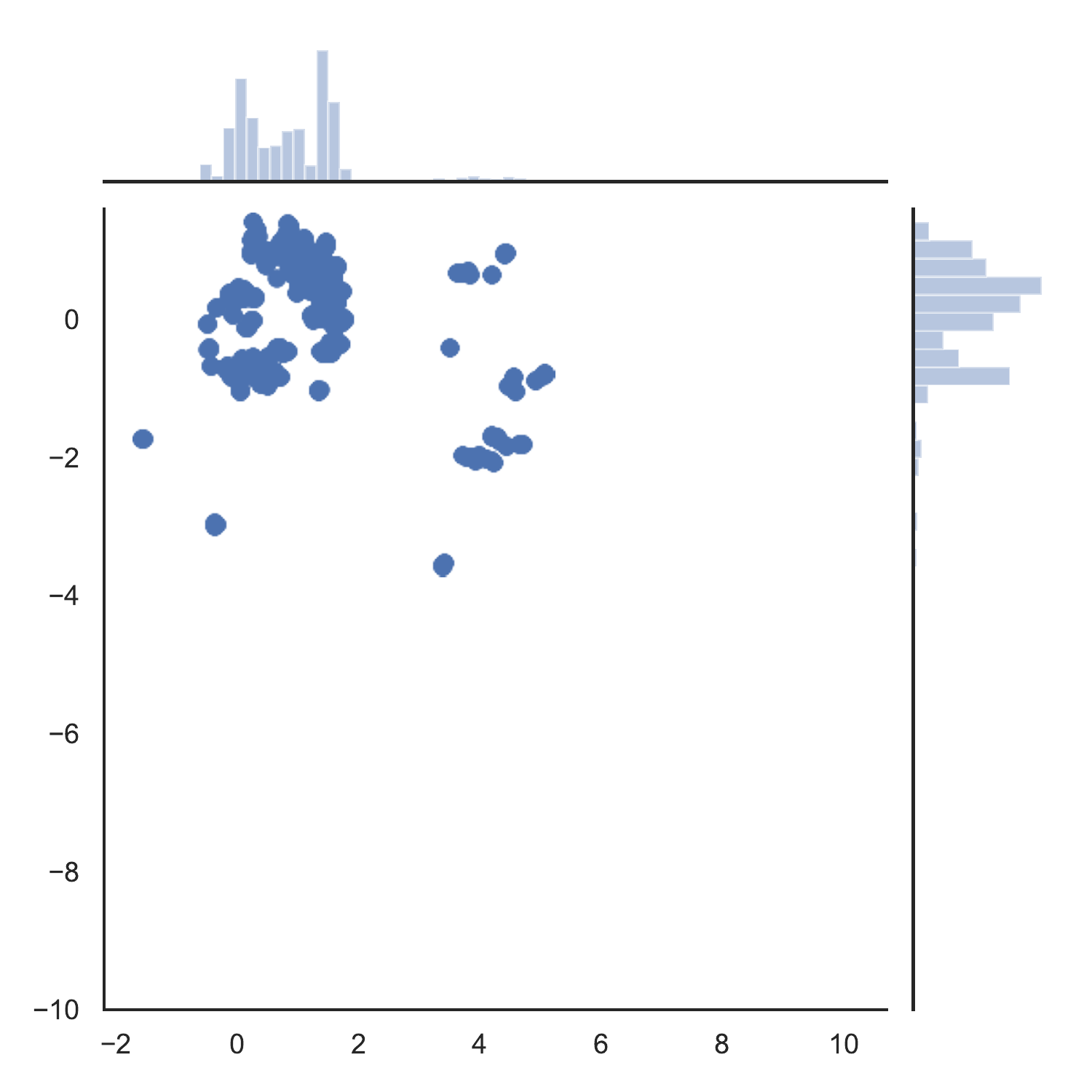}&\includegraphics[width=1.1\linewidth]{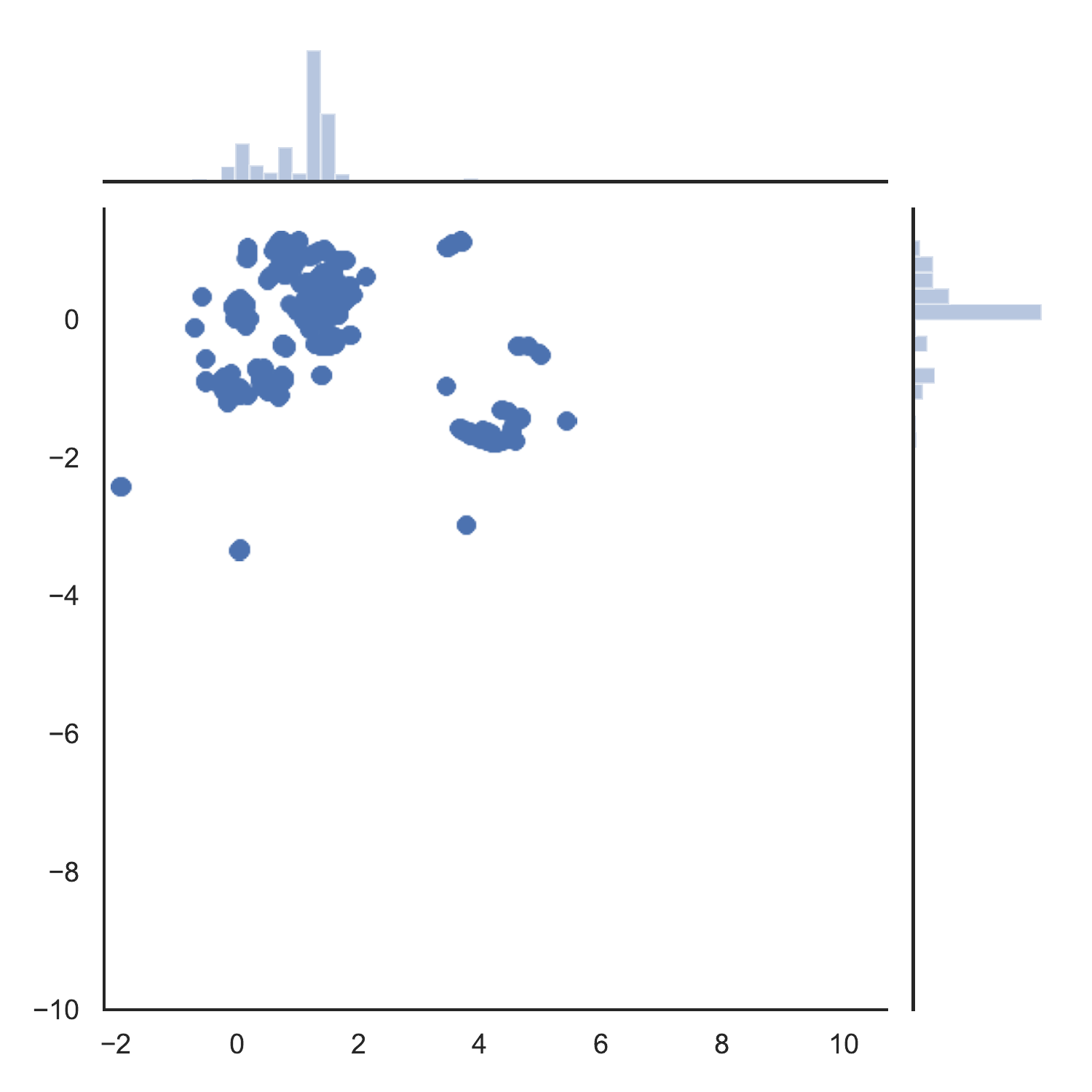}&\includegraphics[width=1.1\linewidth]{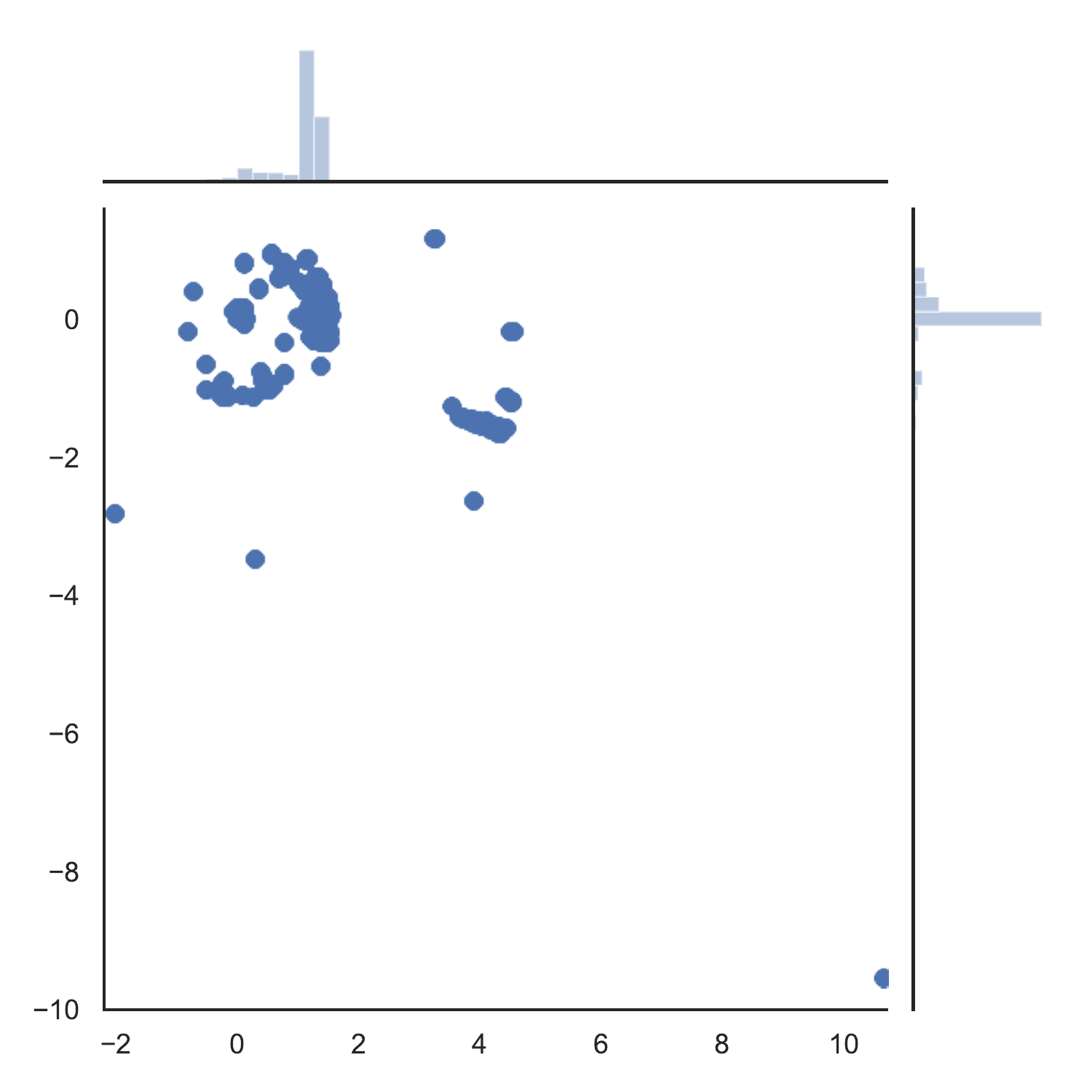}  \\ \hline
 \end{tabular}
\caption{HF case: Eigenvalues distribution dependence on $L$ for the resulting preconditioned matrix $(\bM^{-1} \bA \bM^{-1} \bA)^{(k)}$, $k=1,2$.}
\label{tab:Eigvals}
\end{center} 
\end{table}

To conclude, the HF case shows that the limiting step of the CT is decisively the solver step, and justifies even more the use of efficient preconditioning techniques. In addition, the tensor operator structure of the CT and its speedup with hierarchical matrices allow for limited memory requirements for the impedance matrices and matrix-matrix product function to the number of dofs of the subsystems. These results motivate the use of hierarchical matrices to describe both unknown and right-hand side, in order to reduce matrix-matrix product execution times (\textit{cf.}~\cite{dolz2017covariance}).
\newpage
\subsection{Real case: Non-smooth domain}
\label{subsec:Non_Smooth}
To finish, we compare FOSB with Monte Carlo simulations for a complex case: the sound-soft scattering by a unit Fichera Cube with $\kappa = 5$. We perturb the boundary face located at the $z=0.5$-plane --represented in red in \Cref{fig:Volume Plot} later on-- and use $\IP^0$ elements. We set $L_0=0$ and $L=2$ and generate a sequence of nested meshes associated with discrete spaces $X_0$, $X_1$, $X_2$ of cardinality $N_0=1140$, $N_1 = 2804$ and $N_2=6370$. The zeroth level corresponds to a precision of $10$ elements per wavelength and related meshes are found in \Cref{fig:Meshes}.
\begin{figure}[t]
\begin{minipage}{0.32\linewidth}
\centering
\includegraphics[width=.7\linewidth]{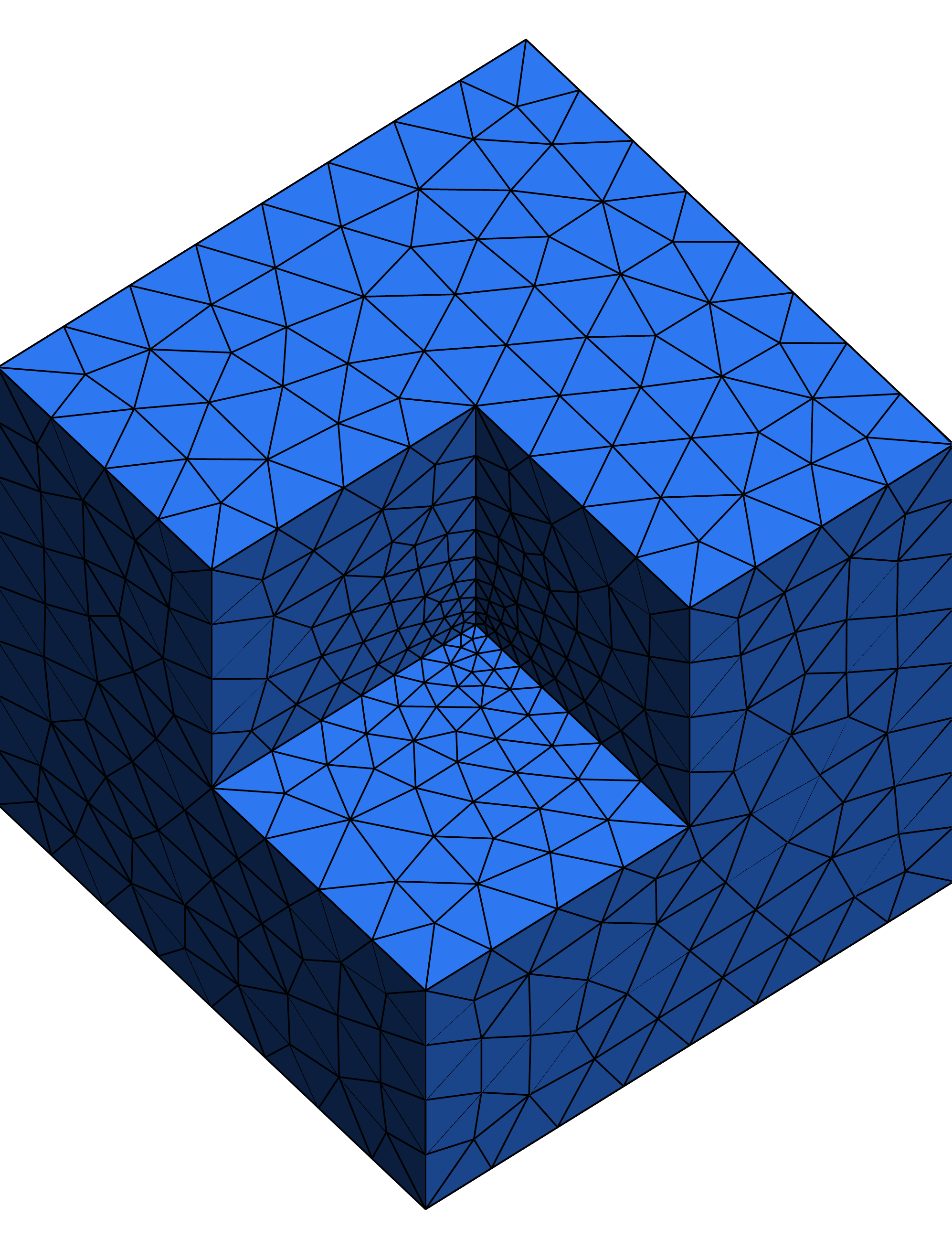}
\end{minipage}
\begin{minipage}{0.32\linewidth}
\centering
\includegraphics[width=.7\linewidth]{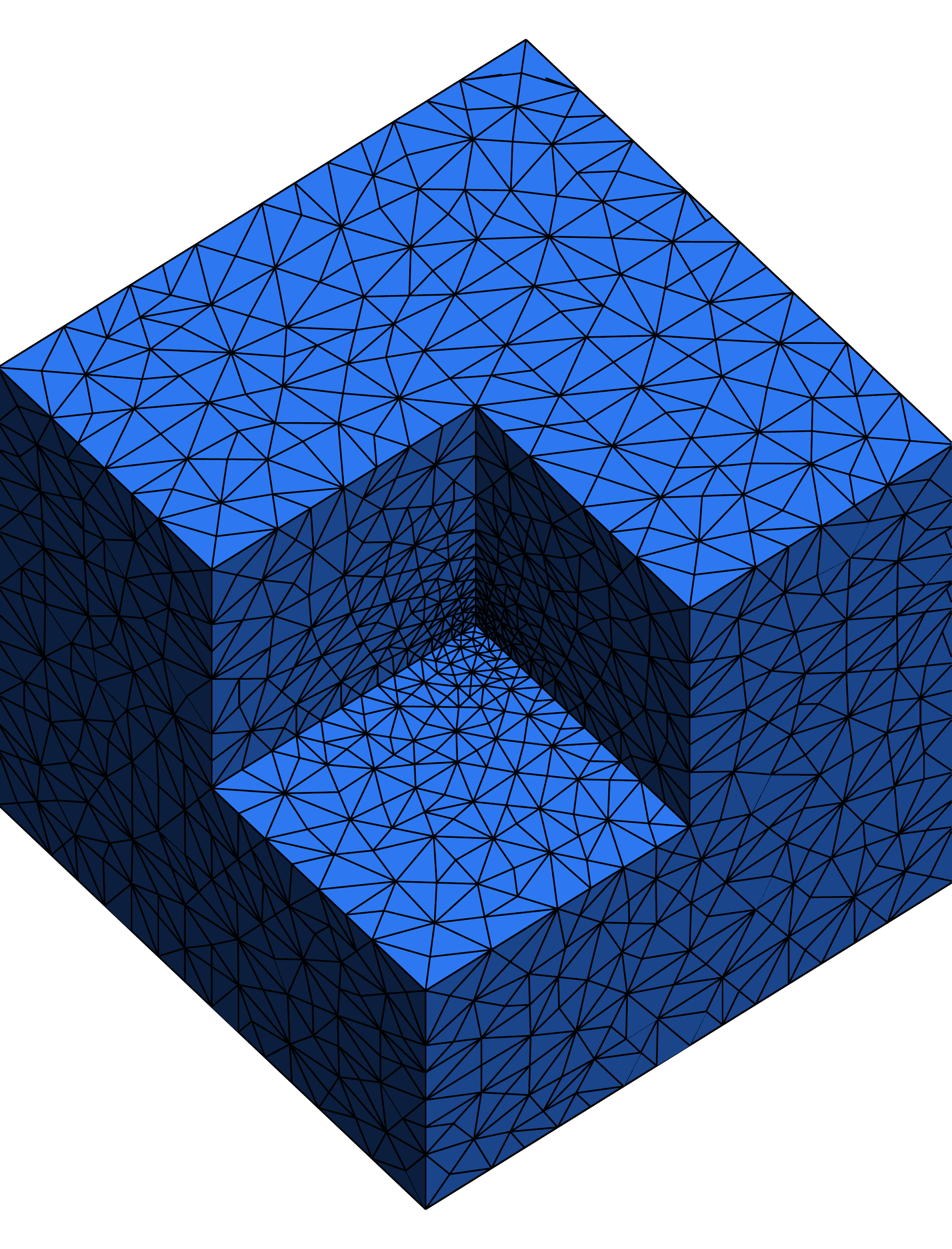}
\end{minipage}
\begin{minipage}{0.32\linewidth}
\centering
\includegraphics[width=.7\linewidth]{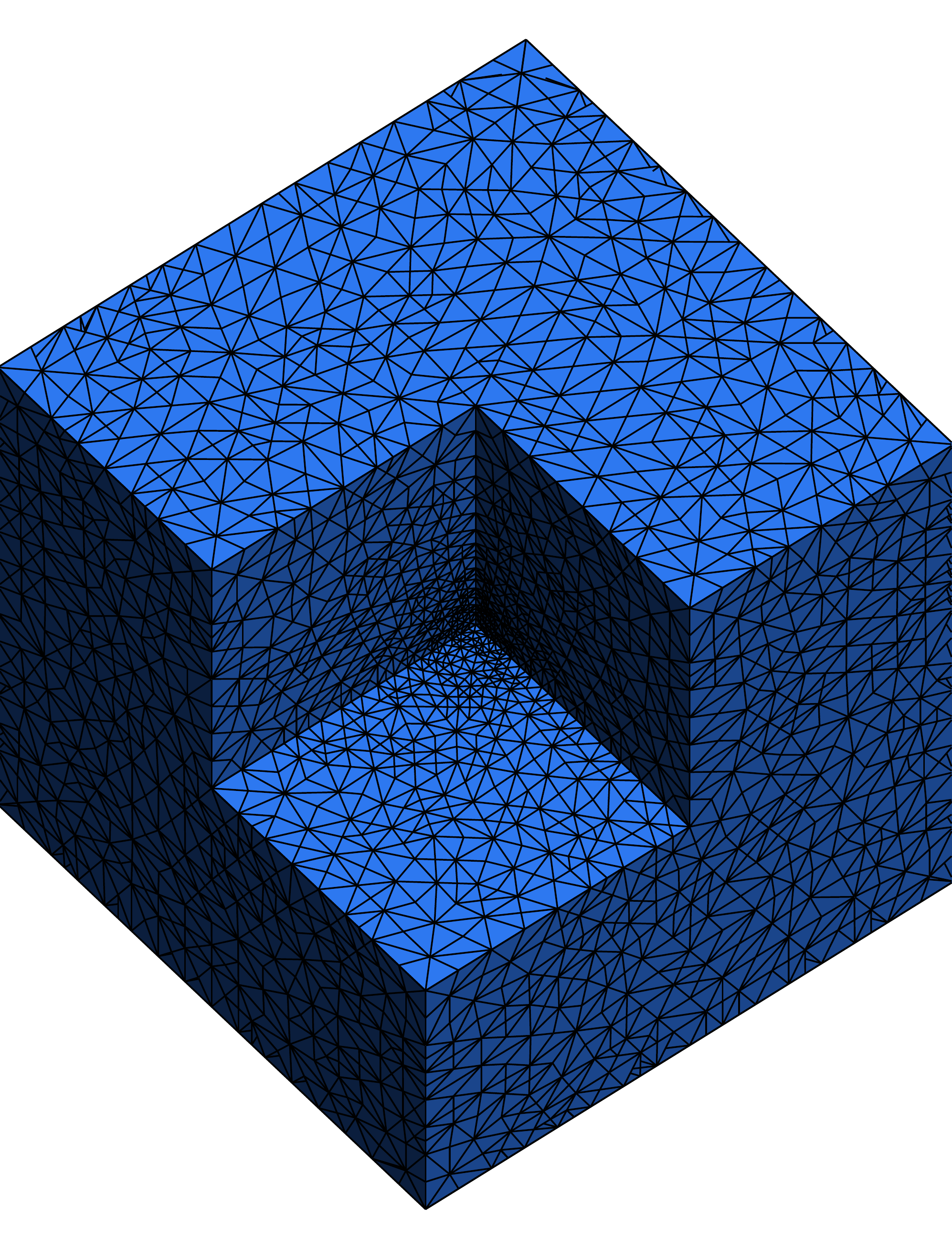}
\end{minipage}
\caption{Sequence of nested meshes used to perform the FOSB.}
\label{fig:Meshes}
\end{figure}

\begin{figure}[t]
\begin{minipage}{0.32\linewidth}
\centering
\includegraphics[width=.8\linewidth]{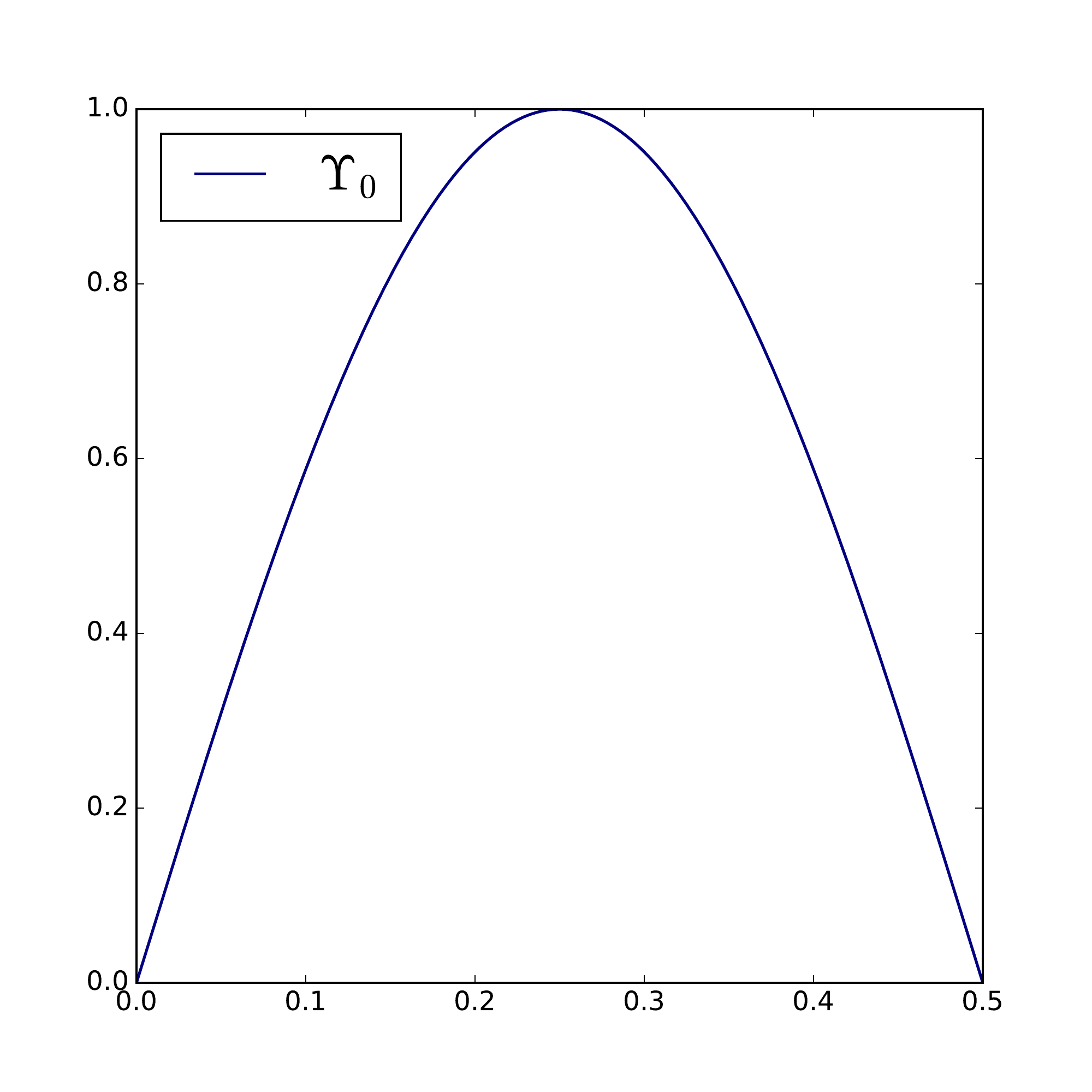}
\end{minipage}
\begin{minipage}{0.32\linewidth}
\centering
\includegraphics[width=.8\linewidth]{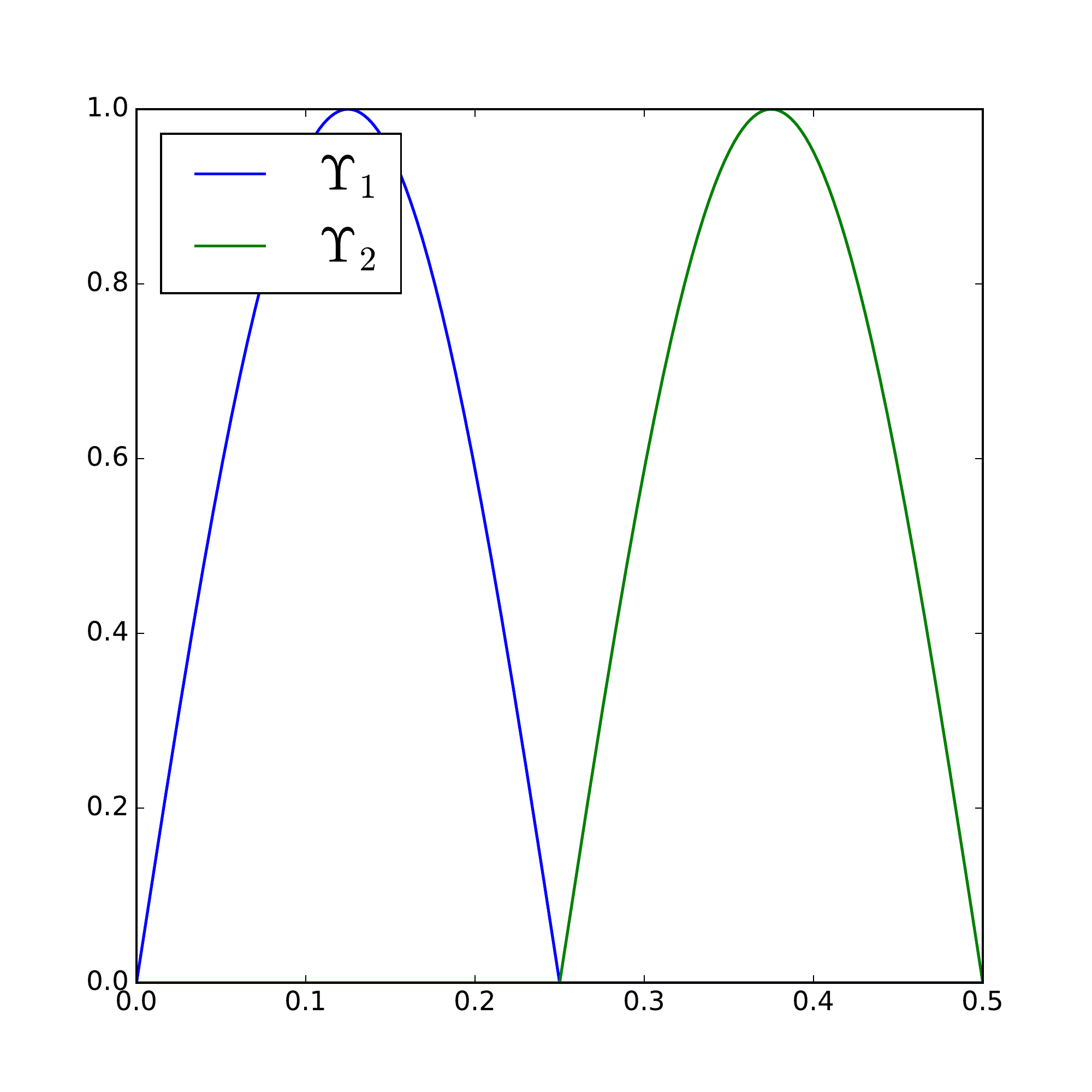}
\end{minipage}
\begin{minipage}{0.32\linewidth}
\centering
\includegraphics[width=.8\linewidth]{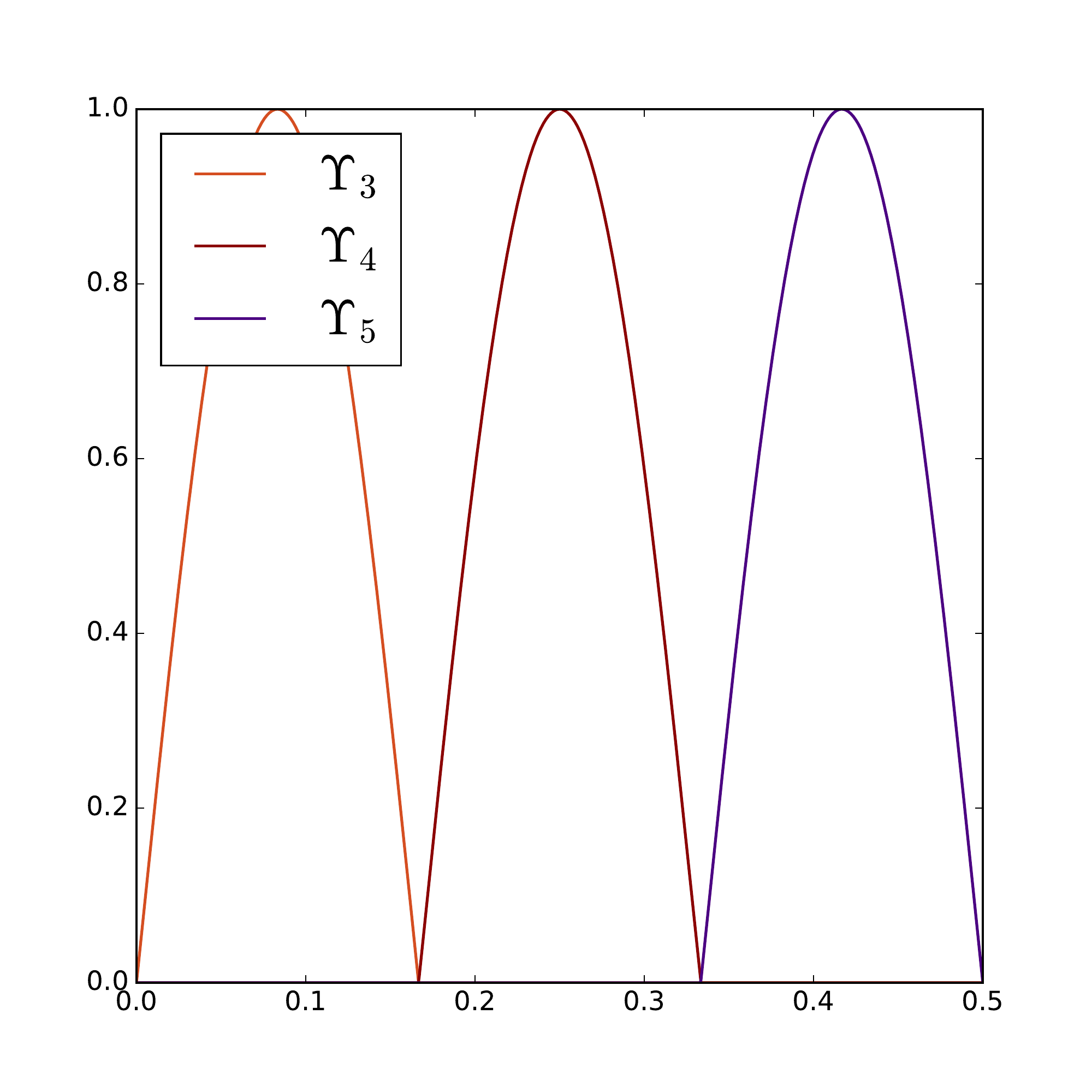}
\end{minipage}
\caption{Splines sinusoidal functions used for random families of perturbed boundaries}
\label{fig:splines}
\end{figure}

Given uniformly distributed random variables $Y_{ij} \in \mU[-1,1], i=0,\cdots,5$, the perturbation field is given as:
\be
\label{eq:peturbation}\bv(\bx,\omega):= \sum_{i=0}^5\sum_{j=0}^5 \Upsilon_i(x)\Upsilon_j(y) Y_{ij} \hat{\bee}_z,~ \bx \in \Gamma, ~ z = 0.5,
\ee
with $\Upsilon_i$ denoting fundamental sine splines of the form $|\sin (q\pi x)|$, $x \in [0,0.5]$, $q\in\{2,4,6\}$ with support of length $0.5 /(q+1)$ represented in \Cref{fig:splines}. Therefore, for $\bx_1$ and $\bx_2$ in $\Gamma$, and $z_1=z_2=0.5$, we have
\be
\mM^{k}[\bv\cdot \bn](\bx_1,\bx_2) = \sum_{i=0}^5\sum_{j=0}^5 \frac{1}{3}\Upsilon_i(x_1)\Upsilon_j(y_1)\Upsilon_i(x_2)\Upsilon_j(y_2).
\ee
The perturbation parameter is set to $t=0.075$. As a reference, we compute the mean field and variance through Monte Carlo method with $M=5000$ simulations (see  \cite[Section 5]{SAJ18}). For each realization, we deform the mesh corresponding to level $L=2$ and obtain $\U(\omega_m)$. Next, we evaluate:
\be
\U^\textup{MC} : = \frac{1}{M}\sum_{m=1}^M \U (\omega_m) \textup{, and } \IV^\textup{MC} := \frac{1}{M} \sum_{m=1}^M (\U(\omega_m)-\U^\textup{MC}) \overline{ (\U(\omega_m)-\U^\textup{MC})}.
\ee
In \Cref{fig:FicheraPerturbed}, we plot mesh elements corresponding to $z=0.5$ for the nominal shape (in red). Therefore, we plot deformed mesh issued from realizations of the perturbation field (in blue). 
\begin{figure}[t]
\begin{minipage}{0.19\linewidth}
\includegraphics[width=1\linewidth]{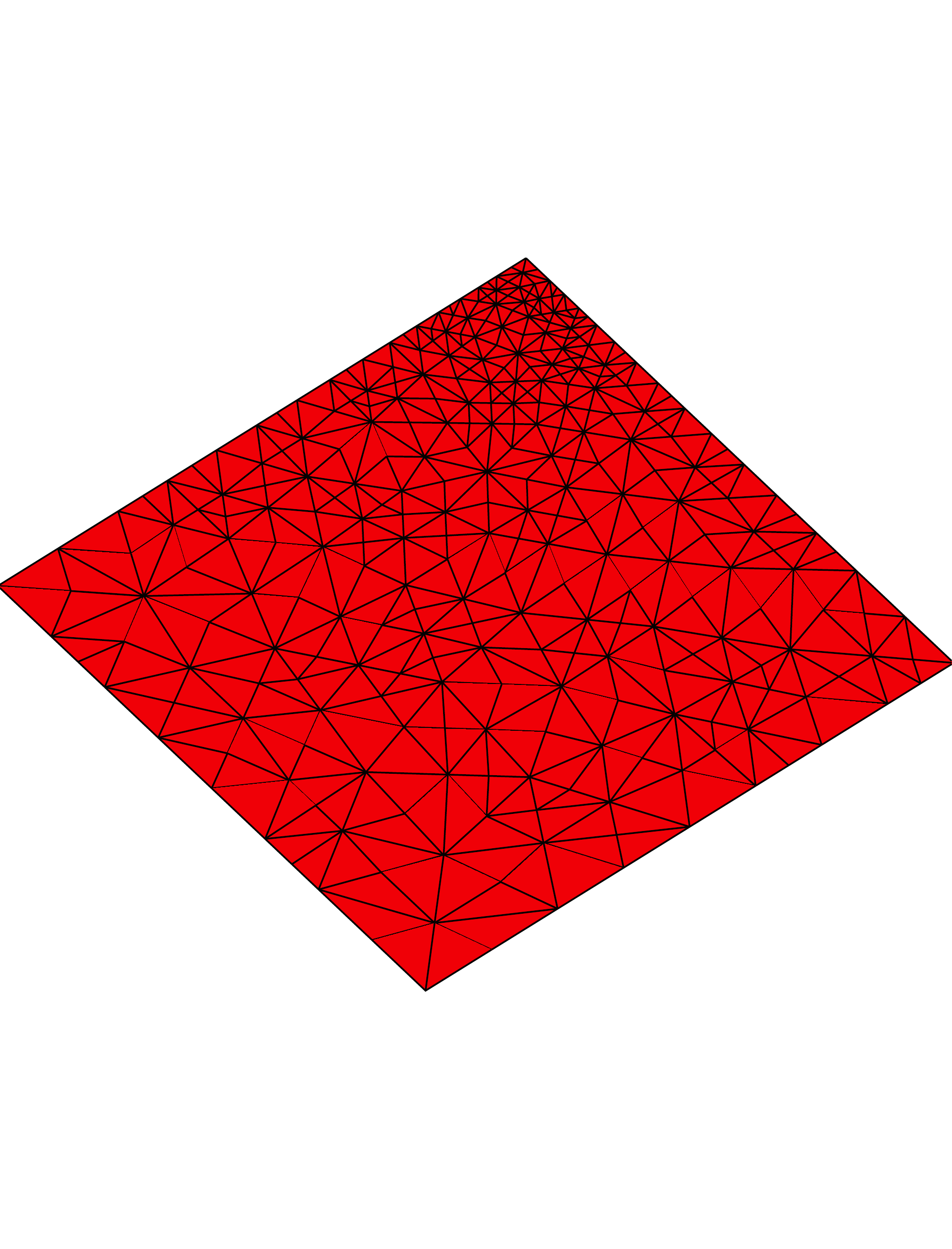}
\end{minipage}
\begin{minipage}{0.19\linewidth}
\includegraphics[width=1\linewidth]{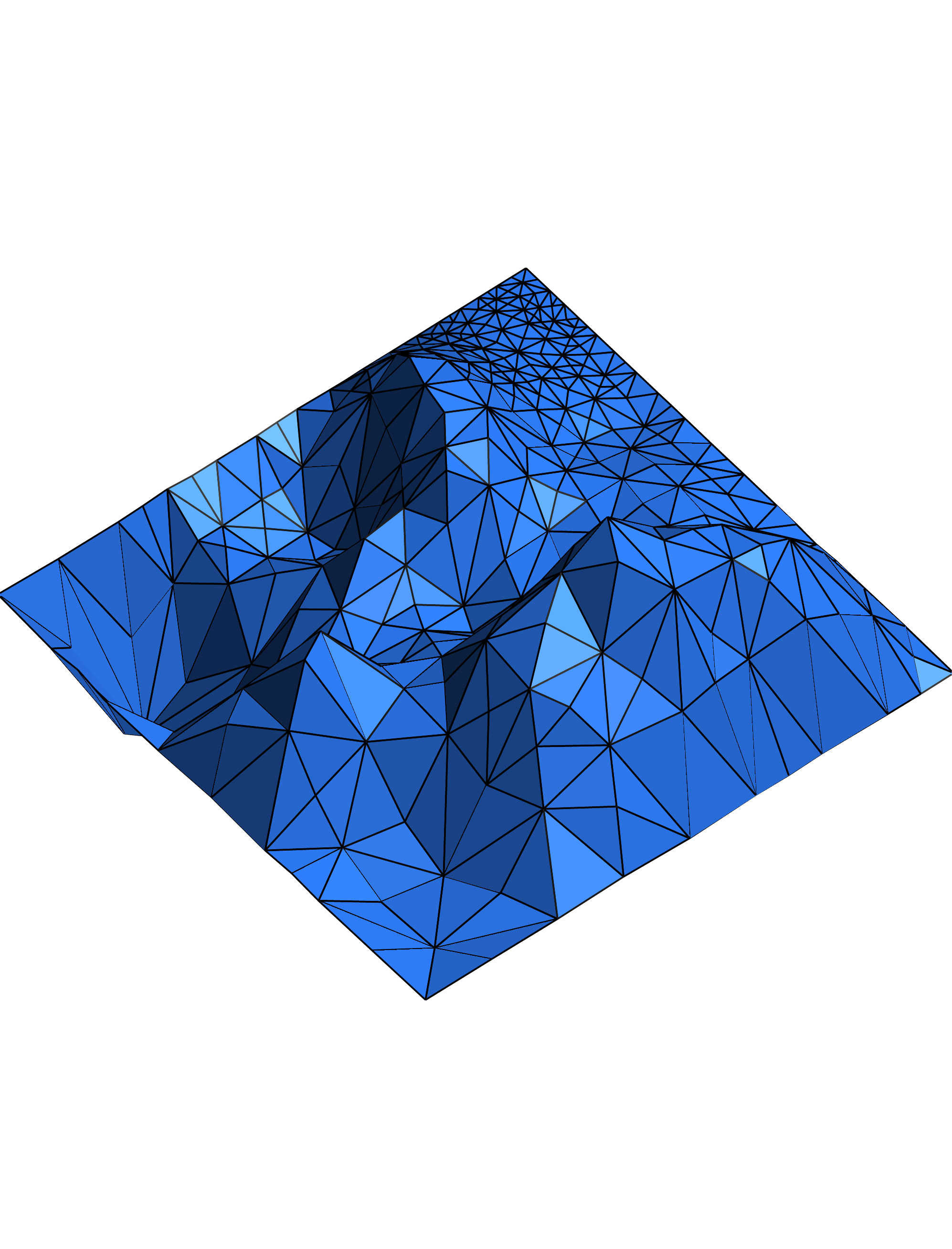}
\end{minipage}
\begin{minipage}{0.19\linewidth}
\vspace{0.4cm}
\includegraphics[width=1\linewidth]{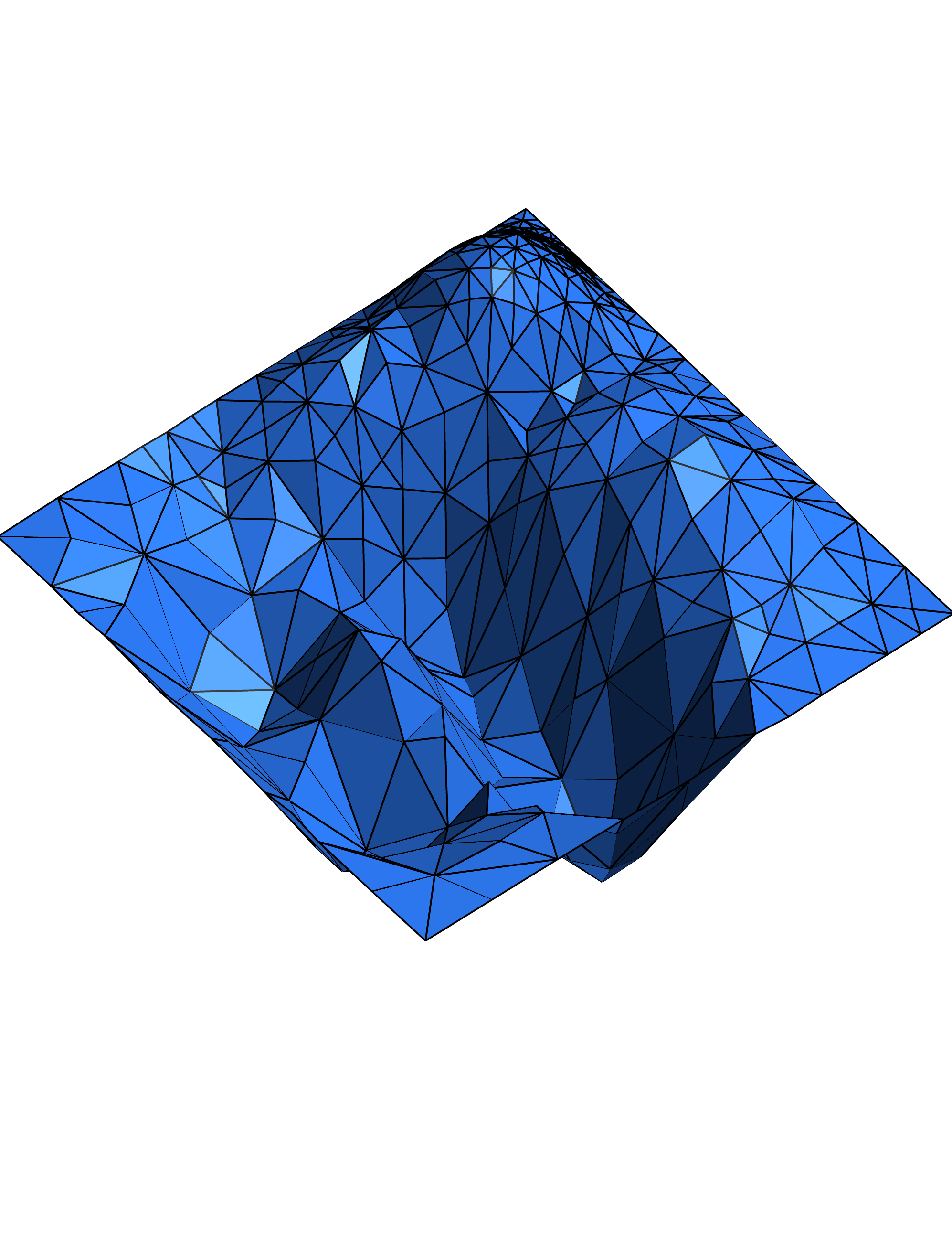}
\end{minipage}
\begin{minipage}{0.19\linewidth}
\includegraphics[width=1\linewidth]{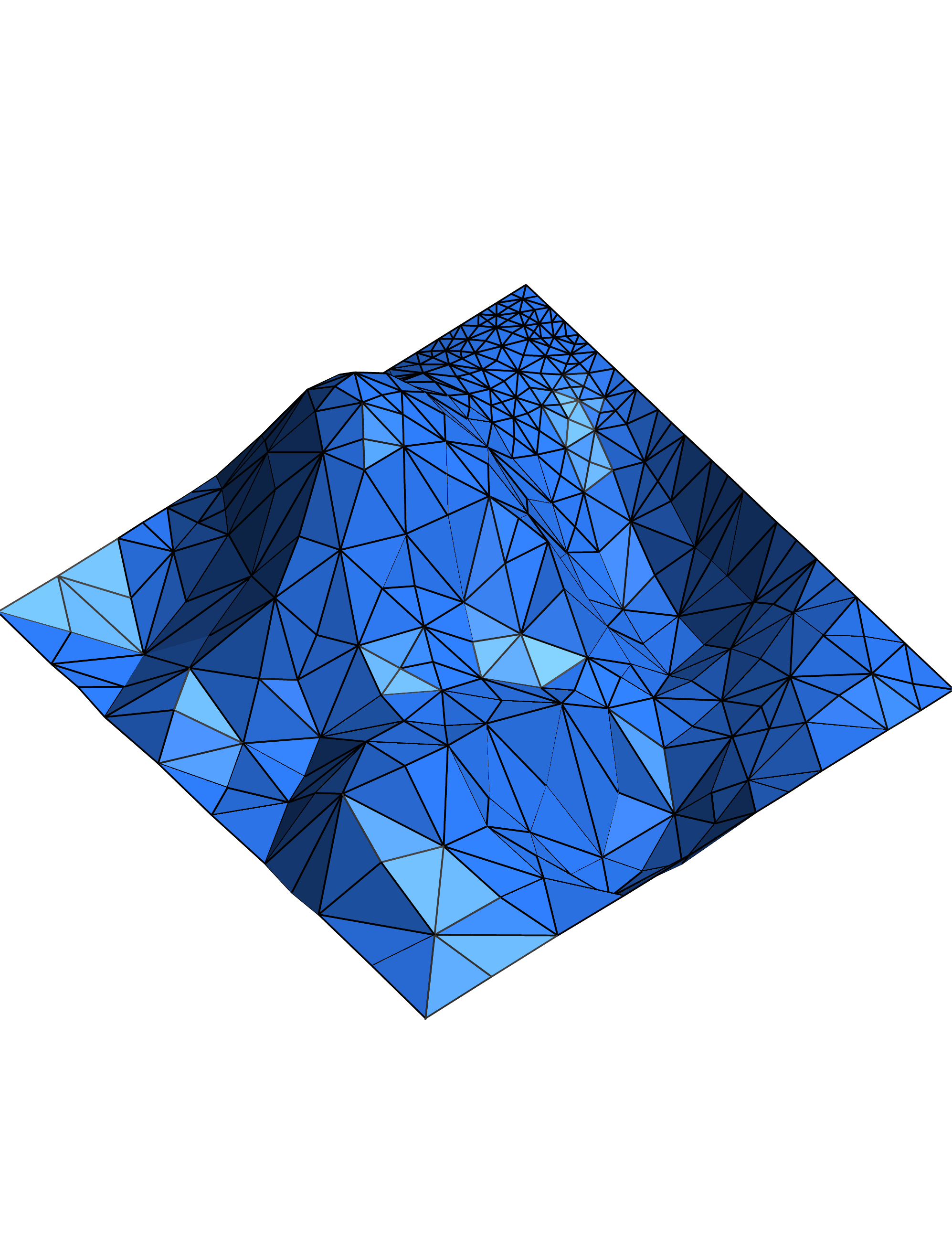}
\end{minipage}
\begin{minipage}{0.19\linewidth}
\includegraphics[width=1\linewidth]{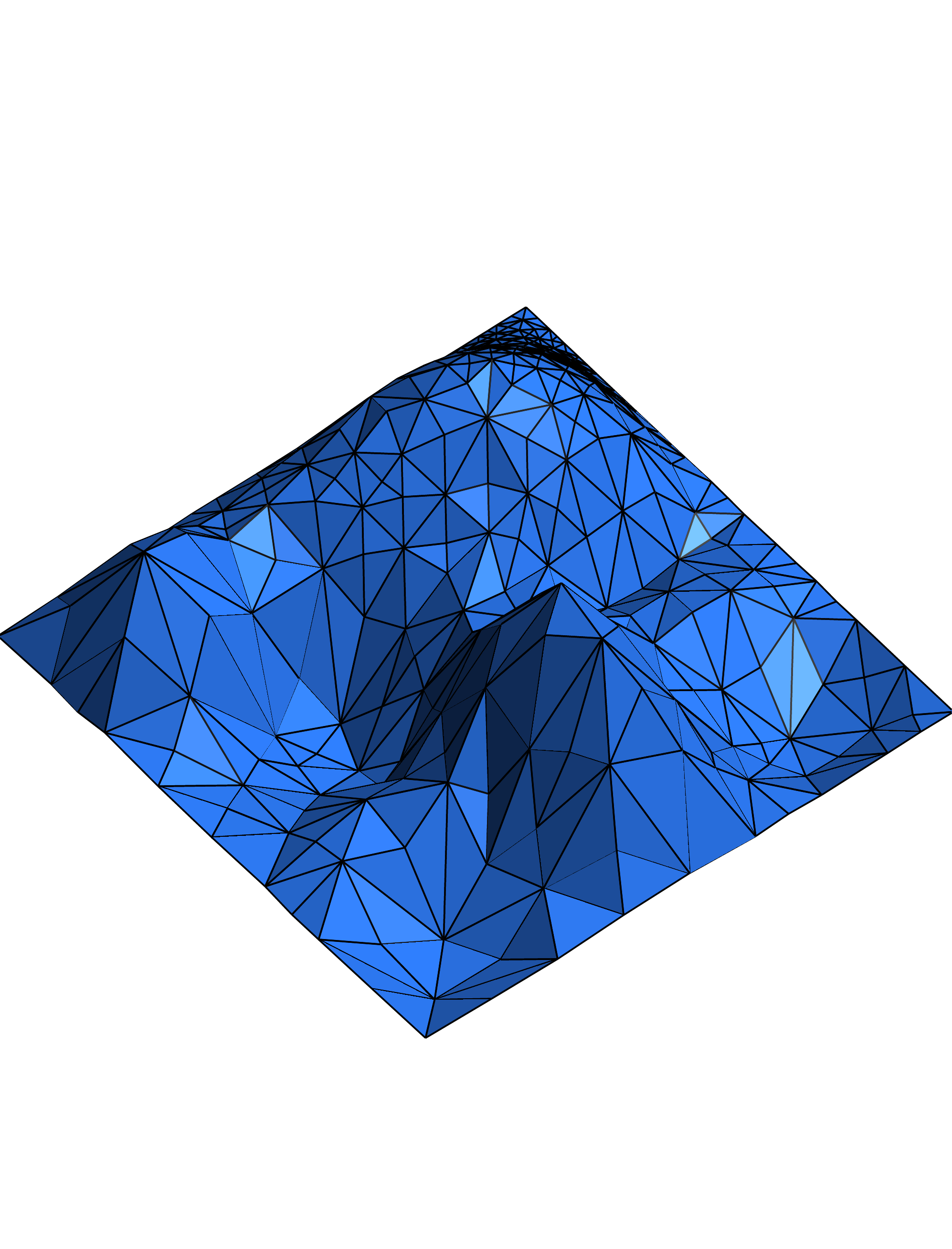}
\end{minipage}
\caption{Nominal mesh (red) and transformed meshes corresponding to realizations of MC simulation (blue).}
\label{fig:FicheraPerturbed}
\end{figure}

For the implementation of a symmetric FOSB, we use an indirect formulation \cite{sauter} for the tensor equation and choose a Near-Field preconditioner \cite{IEEE_EJH} as it outperformed the Multiplicative Calder\'on preconditioner in solution times. In \Cref{fig:Volume Plot}, we plot the total squared density of $\U$ (left) and the standard deviation --square root variance-- $\sqrt{\hat{\IV}}$ (right). We remark that the area close to the perturbed boundary has a higher variance while the zone behind the Fichera Cube has low sensibility to shape variation.

\begin{figure}[t]
\begin{minipage}{0.49\linewidth}
\centering
\includegraphics[width=.8\linewidth]{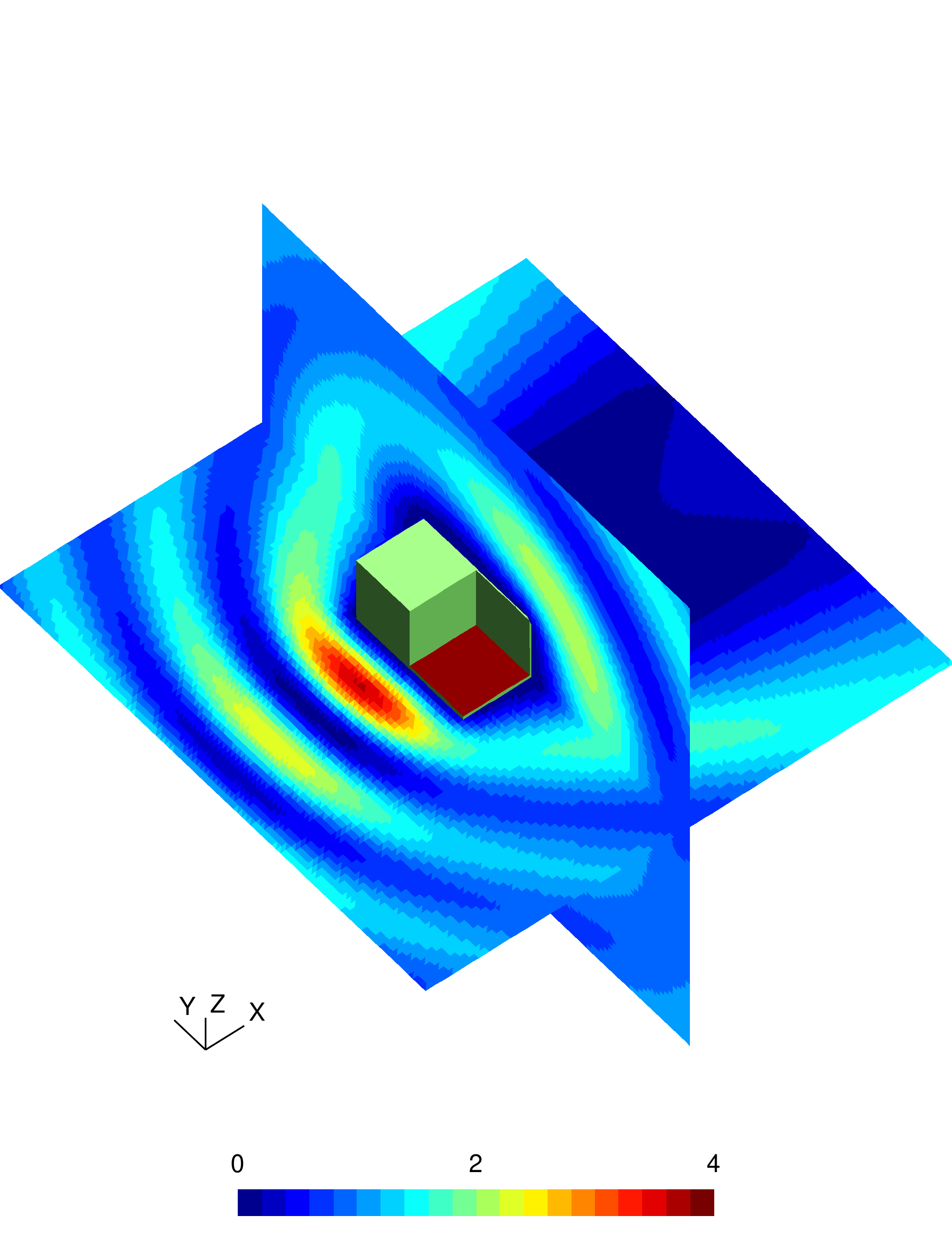}
\end{minipage}
\begin{minipage}{0.49\linewidth}
\centering
\includegraphics[width=.8\linewidth]{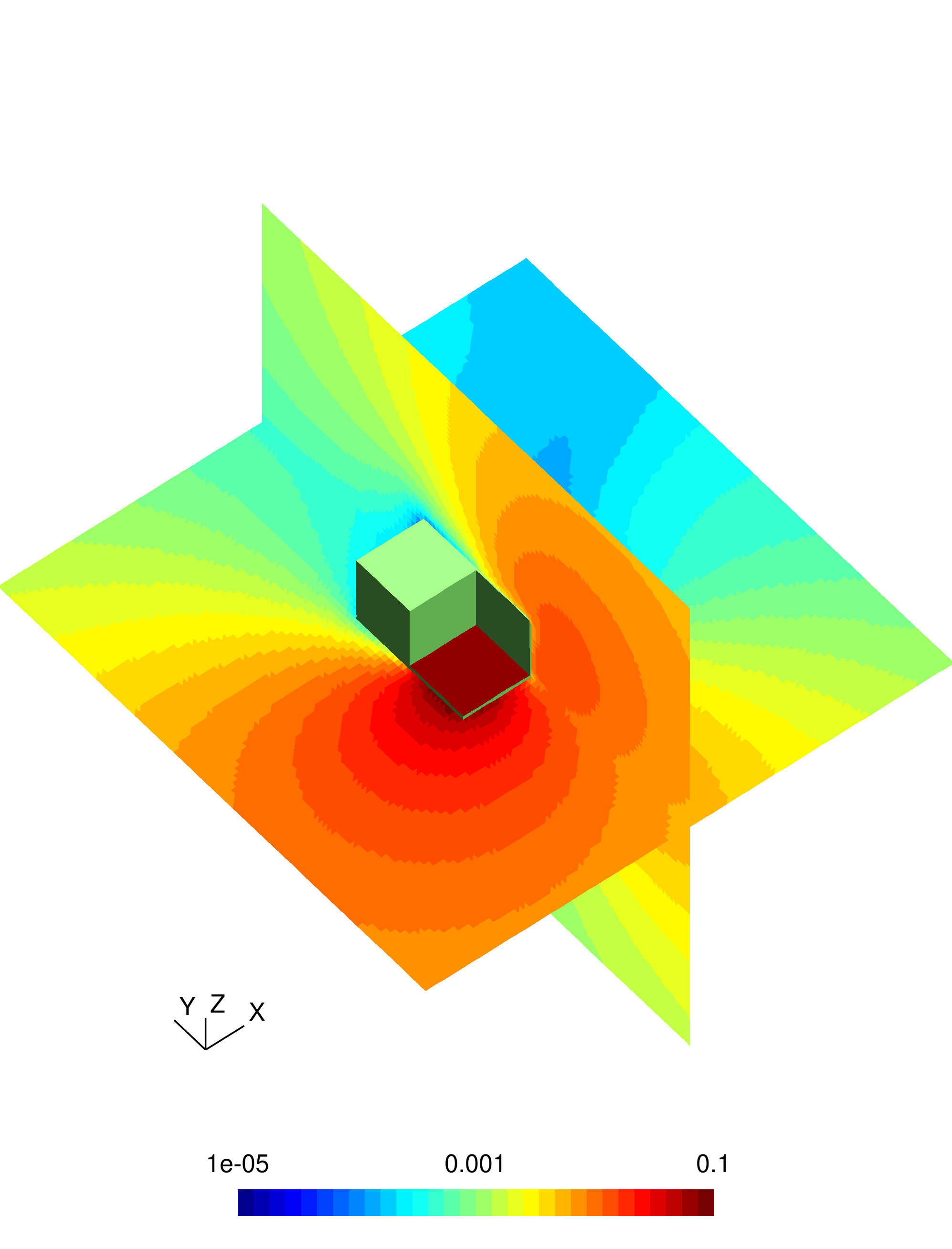}
\end{minipage}
\caption{Volume plot of the squared density for $\U$ (left) and the standard deviation $\sqrt{\hat{\IV}}$ obtained through the FOSB method.}
\label{fig:Volume Plot}
\end{figure}
In \Cref{tab:ResComp_Fichera} we compare the RCSs provided by both methods, namely MC (left column) and FOSB (right column) --we inspire ourselves of the plots in \cite{harbrecht2019rapid}. In the first row, we represent the approximation of the mean field (red) and its sensibility (blue). Second row focuses on RCS for the squared root variance. We remark that both methods show similar patterns. Indeed, the relative $L^2$-error on $\IS^1$ between the FFs differ by a $11.0\%$ (resp.~$18.4\%$) for $\U^\textup{MC}$ and $\U$ (resp.~$\sqrt{\IV^\textup{MC}}$ and $\sqrt{\hat{\IV}}$), evidencing accuracy of the FOSB scheme. The latter is interesting as it shows that the FOA behaves well for domains with corners, albeit lacking theoretical results on it. Finally, the symmetric CT led to a total $dofs=18,420,424$ compared to $N_L^2 = 41,088,100$ for the full tensor space. As a comparison, $M N_L = 38,460,000$ for MC. The total execution time for MC was $13$ hours $26$ min.~compared to $6$ hours $19$ min.~for the FOSB.

\begin{table}[t]
\renewcommand\arraystretch{1.7}
\begin{center}
\footnotesize
\resizebox{9cm}{!} {
\begin{tabular}{
    >{\centering\arraybackslash}m{5.4cm}
    |>{\centering\arraybackslash}m{5.4cm}
    }
\vspace{0.1cm}
  Monte Carlo &FOSB \\ \hline
 \includegraphics[width=\linewidth]{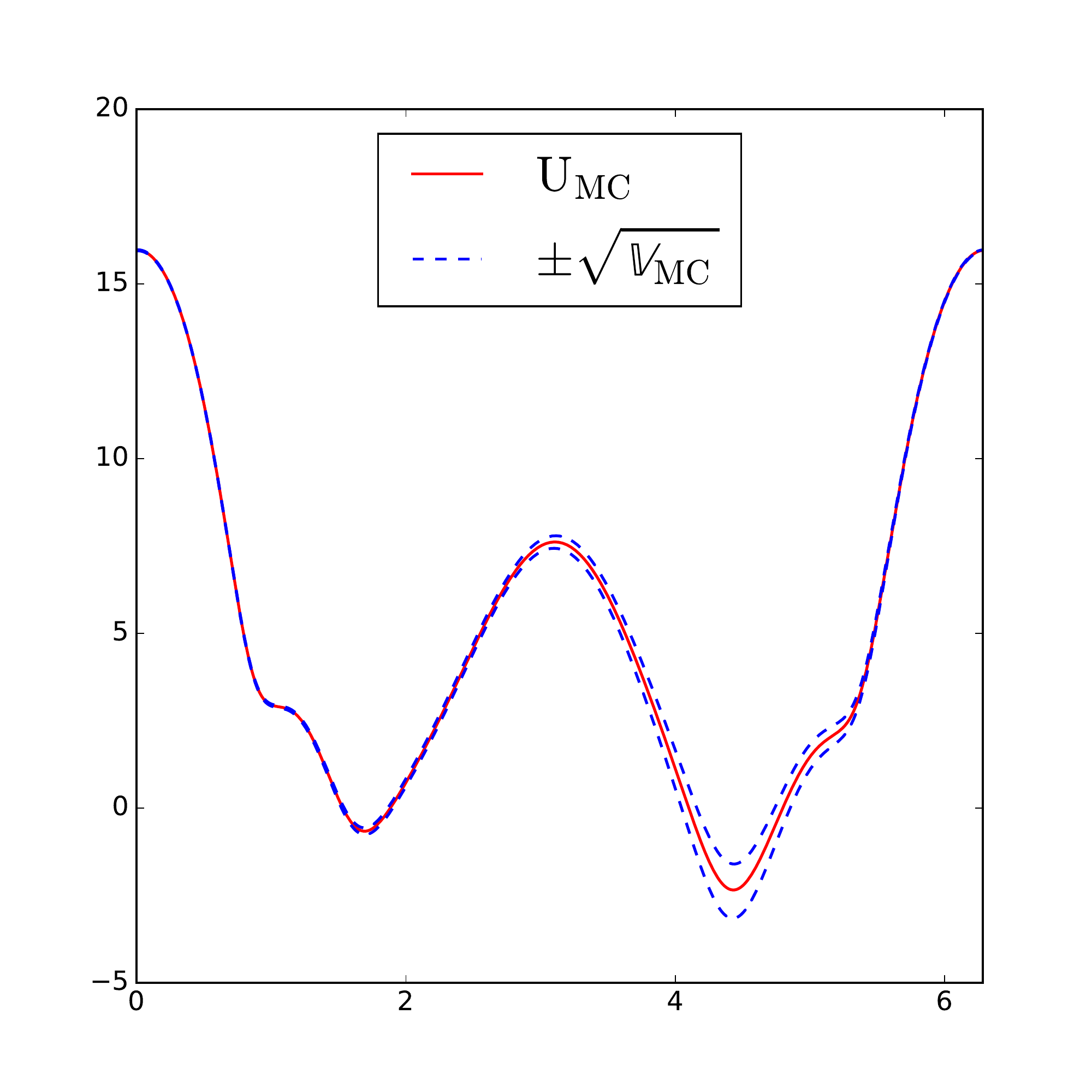} &  \includegraphics[width=\linewidth]{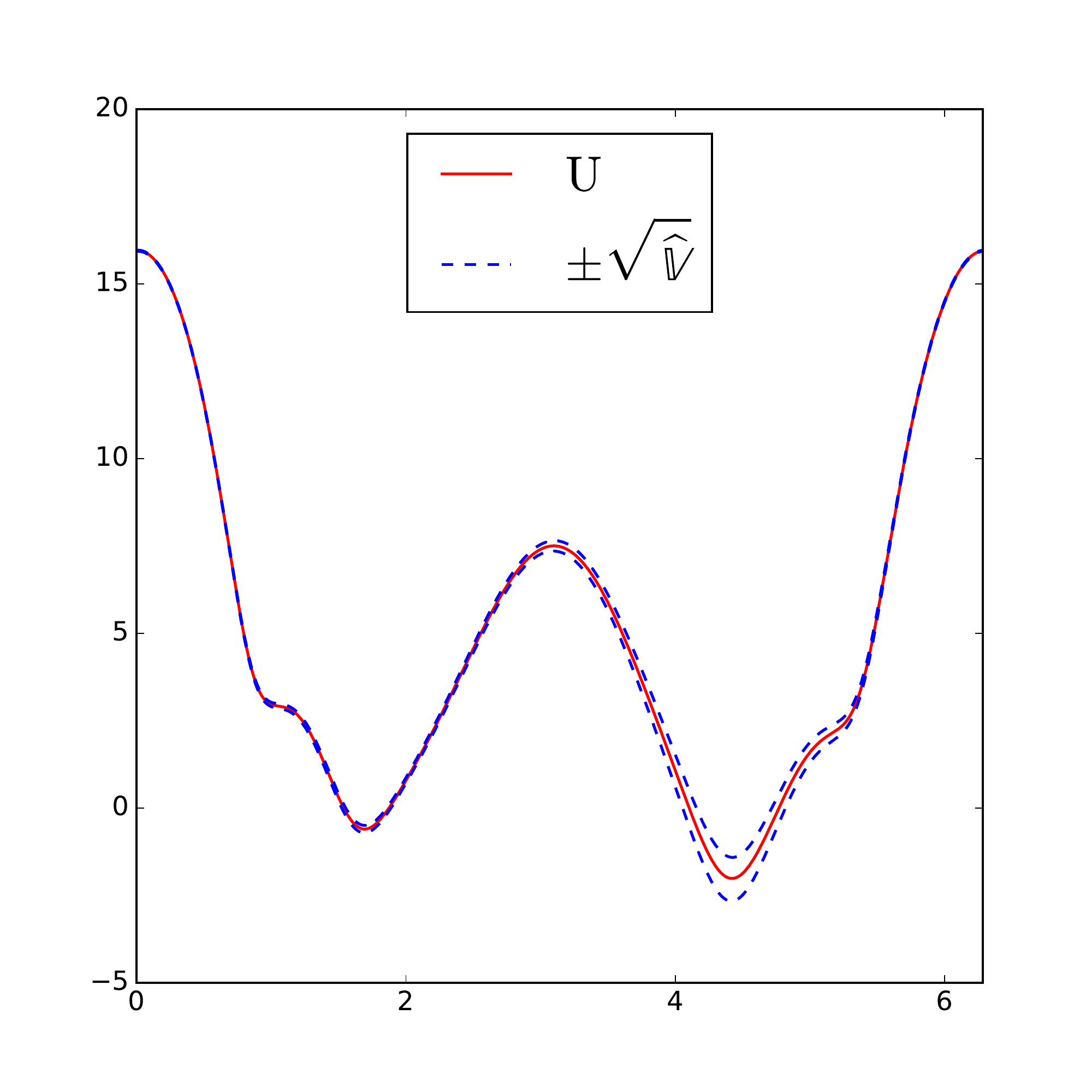}\\ \hline
 \includegraphics[width=\linewidth]{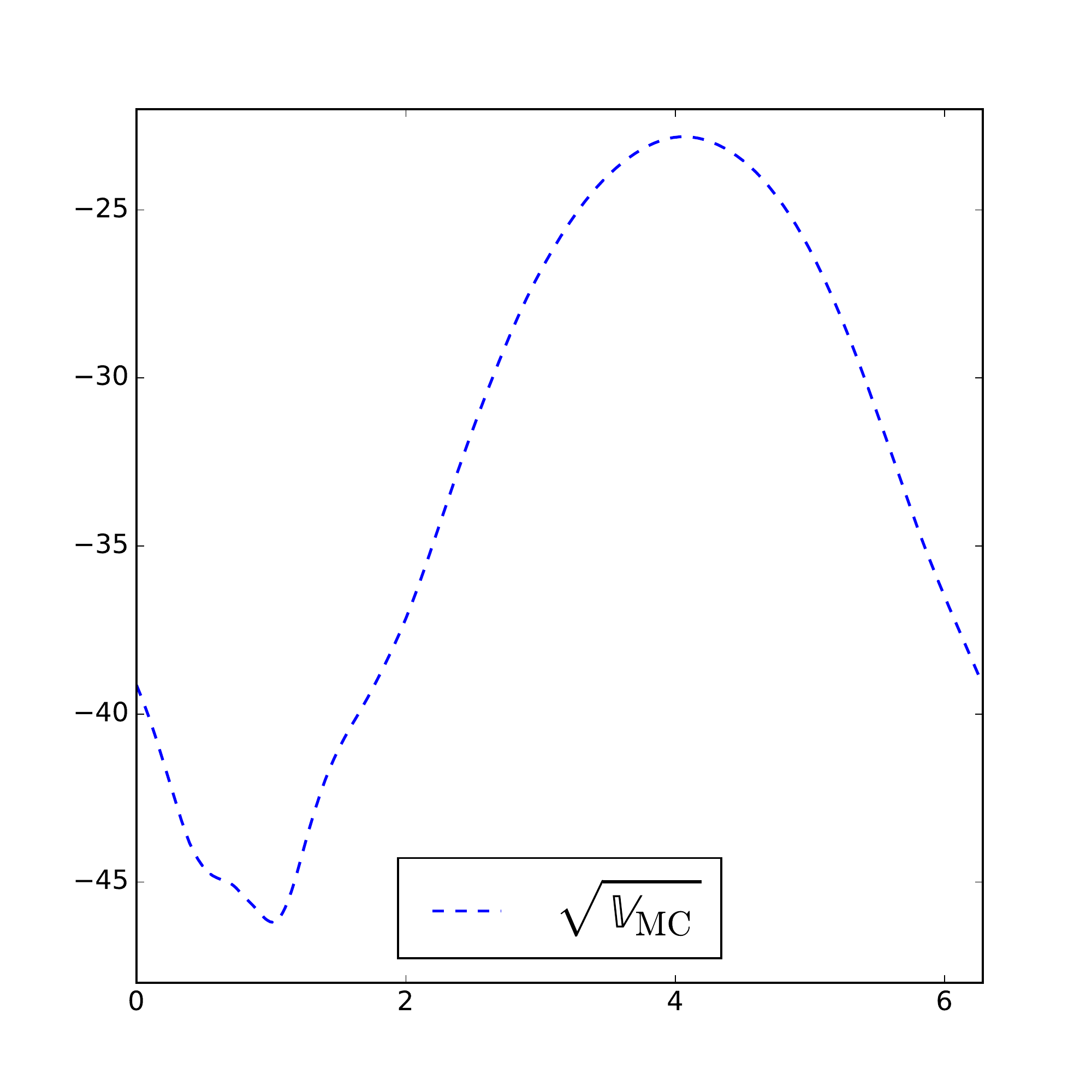} &  \includegraphics[width=\linewidth]{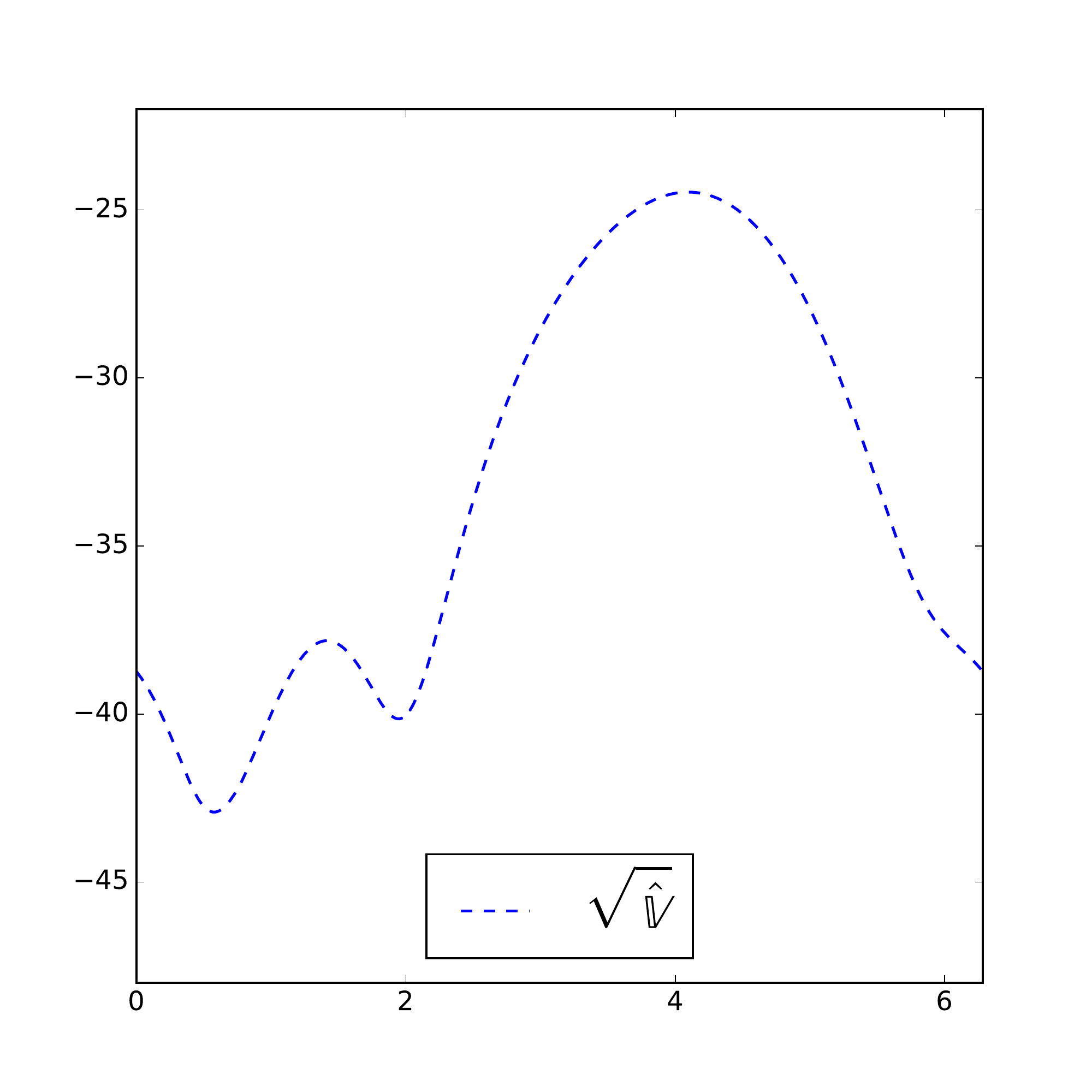}\\ \hline
 \end{tabular}}
\caption{Final comparative results between the MC (left) and FOSB (right) methods. First row shows the approximation for the mean RCS (red) and its standard deviation (blue) while second rows focuses on the standard deviation. RCSs are in represented (dB) versus the angle ($\theta+\pi$) in radians.} 
\label{tab:ResComp_Fichera}
\end{center} 
\end{table}

\section{Conclusion}
\label{sec:conclusion}
In this work, we tackled UQ for random shape Helmholtz scattering problem. Under small perturbation assumptions, we applied the FOSB method and allowed for an accurate approximation of statistical moments with an almost optimal memory and computational requirements. We provided the complete analysis for Helmholtz boundary value problems and added comments concerning the efficient implementation of the schemes. Numerical experiments evidenced the applicability of the technique and showed good scalability and robustness when coupled with fast resolution methods and efficient preconditioners. Observe that theory presented in \Cref{sec:galerkin_CT} and \Cref{subs:prec} and numerical results of \Cref{subsec:TP,subsec:TP_Iterative} are of interest for general Helmholtz-based tensor operators BIEs, as they are developed aside from the FOA framework. Conversely, the FOA and the numerical results of \Cref{subsec:SSSH} apply to low-rank approximation-based schemes to solve the deterministic equation \cite{dambrine2015computing}.

Further research includes asymptotic wavenumber analysis of each specific boundary condition and under additional requirements would lead in some cases to elliptic formulations, simplifying greatly the Galerkin scheme --C\'ea's lemma-- and the sparse tensor approximation. We hope that the analysis carried on in \cite{galkowski2016wavenumber} can be extended to the FOSB method for the exterior sound-soft problem, and would provide $\kappa$-explicit estimates of the constants involved in the scheme along with bounds for the GMRES for both nominal solution and sub-blocks for the CT.

In parallel, this work suggests the use of more efficient tools such as: (i) multilevel matrix-matrix product and compression techniques for the covariance kernel (e.g., hierarchical matrices or low-rank approximations), (ii) efficient iterative solvers for multiple right-hand sides \cite{sun2018variants}, and (iii) fast preconditioning techniques \cite{IEEE_EJH}. Those improvements would allow to compute higher moments in a satisfactory number of operations. Also as a by-product of our study we have rendered available the Bempp-UQ plug-in for further improvement and usage. Our code currently supports the $\IP^1$ projection between grids, the tensor GMRES, the CT for $k=2$ and the FOA for all problems considered here. Current work seeks to implement the FOA for Maxwell equations; speed up the preconditioner matrix assembly for both Helmholtz and Maxwell cases; and incorporate high-order quadrature rules routines for UQ purposes.
\appendix
\section{Boundary reduction}
\label{appendix:A}
\subsection{\texorpdfstring{Case: \textnormal{(P}$_\boldbeta$\textnormal{)}}{Case: (P$_\beta$}}
Set $\fA_i \equiv \fA_{\kappa_i}$ and recall the following identity:
\begin{lemma}
\label{lemma:uinc}
Let $\U$ be the solution of problem (P$_\boldbeta$) with $\bxi^\textup{inc}:=(\gamma_0\U^\textup{inc},\gamma_1\U^\textup{inc})$. Thus, for $\bxi:=(\gamma_0 \U^0 ,\gamma_1 \U^0)$, we have
\be
\left(\frac{1}{2} \opI + \opA_0\right) \bxi = \bxi^\textup{inc}.
\ee
\end{lemma}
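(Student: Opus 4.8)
The plan is to derive the identity purely from the Calder\'on relations of \Cref{subs:bio}, \emph{without} using the particular boundary condition of (P$_\boldbeta$): the claim links the exterior total Cauchy data to the incident data and is therefore insensitive to the Dirichlet/Neumann/impedance/transmission choice. First I would split the exterior total field into its incident and scattered parts, $\U^0 = \U^\textup{inc} + \U^\textup{sc}$ on $D^c$, so that at the level of traces $\bxi = \bxi^\textup{inc} + \bxi^\textup{sc}$, with $\bxi^\textup{sc} := (\gamma_0\U^\textup{sc},\gamma_1\U^\textup{sc})$ and $\bxi^\textup{inc} := (\gamma_0\U^\textup{inc},\gamma_1\U^\textup{inc})$.

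The scattered field $\U^\textup{sc}$ is, by construction, a radiating solution of the Helmholtz equation in $D^c = D^0$, so the exterior Calder\'on identity of \Cref{eq:cald} fixes its Cauchy data under the exterior projector, $\fP_{\kappa_0}^0 \bxi^\textup{sc} = \bxi^\textup{sc}$. Since the two projectors are complementary, $\fP_{\kappa_0}^0 + \fP_{\kappa_0}^1 = \opI$, this is the same as $\fP_{\kappa_0}^1 \bxi^\textup{sc} = (\opI - \fP_{\kappa_0}^0)\bxi^\textup{sc} = 0$, that is $\left(\tfrac{1}{2}\opI + \opA_0\right)\bxi^\textup{sc} = 0$.

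It then remains to handle the incident part, and this is the only place where an extra hypothesis is needed: $\U^\textup{inc}$ must extend as a solution of $L_{\kappa_0}\U^\textup{inc}=0$ across $\Gamma$ into the interior $D^1 = D$ --- which is automatic for the plane waves $\U^\textup{inc}(\bx)=e^{\imath\kappa x}$ employed throughout, and more generally whenever the incident field is source-free near $\overline D$. Its one-sided traces then agree, and the interior Calder\'on identity of \Cref{eq:cald} applies to $\bxi^\textup{inc}$, giving $\left(\tfrac{1}{2}\opI + \opA_0\right)\bxi^\textup{inc} = \fP_{\kappa_0}^1\bxi^\textup{inc} = \bxi^\textup{inc}$. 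Adding the two contributions yields $\left(\tfrac{1}{2}\opI + \opA_0\right)\bxi = 0 + \bxi^\textup{inc} = \bxi^\textup{inc}$, which is the assertion.

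The computation is routine once the projector framework is available; the one genuine obstacle is the extension/regularity hypothesis on $\U^\textup{inc}$ that legitimizes the interior identity. One can even postpone it: rewriting the exterior identity as $\opA_0\bxi^\textup{sc} = -\tfrac12\bxi^\textup{sc}$ with $\bxi^\textup{sc} = \bxi - \bxi^\textup{inc}$ and rearranging gives $\left(\tfrac{1}{2}\opI + \opA_0\right)\bxi = \left(\tfrac{1}{2}\opI + \opA_0\right)\bxi^\textup{inc}$ unconditionally, after which the interior identity is invoked only on the incident data. Either route isolates exactly the same hypothesis, so I would state it explicitly and then close the argument with the interior Calder\'on identity.
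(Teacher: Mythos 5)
Your proof is correct and takes essentially the same route as the paper's: apply the exterior Calder\'on identity to the scattered Cauchy data and the interior one to the incident Cauchy data, then add (the paper merely rearranges the sum as $\opA_0(2\bxi^\textup{inc}-\bxi)=\tfrac12\bxi$ instead of using projector complementarity). The extension hypothesis you isolate is exactly what the paper invokes implicitly with the phrase ``$\U^\textup{inc}$ provides admissible Cauchy data inside the scatterer'', so your version just makes that assumption explicit.
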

\begin{proof}
Consider $\U$ the solution of (P$_\boldbeta$). Since $\U^\textup{inc}$ provides admissible Cauchy data inside the scatterer and $\U^\textup{scat}$ is a radiative Helmholtz equation, the following identities hold:
\be
\left(\frac{1}{2} \opI - \opA_0\right) \bxi^\textup{scat}= \bxi^\textup{scat}\textup{, and }
\left(\frac{1}{2} \opI + \opA_0\right) \bxi^\textup{inc} = \bxi^\textup{inc}.
\ee
Summing both equations, we get,
\begin{align*}
\opA_0(2\bxi^\textup{inc} -\bxi)=\frac{1}{2}  \bxi &\Rightarrow \left( \opA_0 + \frac{1}{2} \opI\right) \bxi =  2 \opA_0 \bxi^\textup{inc} = \bxi^\textup{inc}.
\end{align*}\end{proof}
This last result allows us to determine directly the BIEs of $\U$ from the BCs for $\beta= 0,1,3$. Let us focus on (P$_2$). Second row of \Cref{lemma:uinc} and the BCs rewrite:
\be
\left(\opW_\kappa  - \imath \eta \left(\frac{1}{2}\opI + \opK'_\kappa \right) \right)\gamma_0 \U = \gamma_1 \U^\textup{inc}.
\ee
Next, the integral representation formula (\ref{eq:representation_formula}) gives:
\begin{align*}
\U^\textup{scat}&= - \SL_{\kappa}(\gamma_1 \U^\textup{scat}) + \DL_{\kappa}(\gamma_0 \U^\textup{scat} ) \textup{ in } D^c \textup{, and }\\
0 = &- \SL_{\kappa}(\gamma_1 \U^\textup{inc}) + \DL_\kappa(\gamma_0 \U^\textup{inc} ) \textup{ in } D^c.
\end{align*}
Summing both equations yields
\be
\U= \U^\textup{inc}- \SL_{\kappa}(\gamma_1 \U) + \DL_{\kappa}(\gamma_0 \U ) \textup{ in } D^c,
\ee
giving the final results for the expected field. 
\subsection{\texorpdfstring{Case: \textnormal{(SP}$_\boldbeta$\textnormal{)}}{Case: (SP$_\beta$}}
Similarly, boundary reduction for the shape derivative is deduced from the following identities for $\bxi':=\bxi^0$:
\be
\left(\frac{1}{2} \opI - \opA_0\right) \bxi' = \bxi'\textup{, and }
\left(\frac{1}{2} \opI + \opA_1\right) \bxi'^1 = \bxi'^1 \textup{, if } \beta= 3,
\ee
by considering the boundary conditions. In particular, for $\beta=2$:
\begin{align*}
-\opW_\kappa \gamma_0 \U' + \left(\frac{1}{2}\opI - \opK'_\kappa \right) \gamma_1 \U' &= \gamma_1 \U' \\
\Rightarrow\left(\opW_\kappa  - \imath \eta \left(\frac{1}{2}\opI + \opK'_\kappa \right) \right)\gamma_0 \U' &=  \left(\frac{1}{2}\opI + \opK'_\kappa \right) g_2.
\end{align*}
Finally, we detail the potential reconstruction for the transmission problem. Representation formulas \Cref{eq:representation_formula} yield:
\begin{align*} 
{\U'}^{i}  &= -  \SL_{\kappa_i} \gamma_1 \U'^i + \DL_{\kappa_i} \gamma_0 \U'^i\textup{ in } D^i,~i=0,1\\
& = \fR_\kappa(\bxi) \textup{ in } \mD \textup{  (\Cref{eq:compact_potential})}.
\end{align*}
\bibliographystyle{siamplain}
\bibliography{references}

\end{document}